	\tikzstyle{every picture}=[scale=.35,inner sep=0]
	\newtheorem{thm}{Theorem}
  	\newtheorem{cor}{Corollary}
  	\newtheorem{lem}{Lemma}
  	\newtheorem{prop}{Proposition}
	\newcommand{\thistheoremname}{}					
	\newtheorem{genericthm}[thm]{\thistheoremname}
	\newtheorem*{genericthm*}{\thistheoremname}
	\newenvironment{namedthm*}[1]
	  {\renewcommand{\thistheoremname}{#1}%
	   \begin{genericthm*}}
	  {\end{genericthm*}}	  
	\theoremstyle{definition}
	\theoremstyle{remark}
  	\newtheorem{rem}{Remark}
	\newcommand{\M}{\mathcal{M}}
	\newcommand{\Mbar}{\overline{\mathcal{M}}}
	\newcommand{\leftlongmapsto}{\longleftarrow\!\shortmid}
	\newcounter{case}
\begin{document}

\title{Classes of Weierstrass points on genus $2$ curves}

\author{Renzo Cavalieri}
\address{Department of Mathematics, Colorado State University \newline \indent Fort Collins, CO 80523, USA}
\email{renzo@math.colostate.edu}

\author{Nicola Tarasca}
\address{Department of Mathematics, Rutgers University \newline \indent Piscataway, NJ 08854, USA}
\email{nicola.tarasca@rutgers.edu}

\subjclass[2010]{ 14H99 (primary),  14C99 (secondary)}
\keywords{Effective cycles on moduli spaces of curves, the strata algebra, hyperelliptic curves.}

\begin{abstract}
We study the codimension $n$  locus of curves of genus $2$ with $n$  distinct marked Weierstrass points  inside the moduli space of genus $2$, $n$-pointed curves, for $n \leq 6$. 
We give a recursive description of 
the classes of the closure of these loci inside the moduli space of stable curves. 
For $n\leq 4$, we express these classes using a generating function over stable graphs indexing the boundary strata of moduli spaces of pointed stable curves.
Similarly, we express the closure of these classes inside the moduli space of curves of compact type for all $n$.
 This is a first step in the study of the structure of hyperelliptic classes in all genera.
\end{abstract}

\maketitle

A classical way to construct subvarieties of moduli spaces of curves is to consider families of curves admitting a map  of a fixed degree $d$ to a rational curve. Loci of all such $d$-gonal curves are often referred to as Hurwitz loci.

Faber and Pandharipande (\cite{MR2120989}) use localization on  moduli spaces of stable maps to exhibit the classes of the closure of Hurwitz loci as elements of the tautological ring. Producing explicit closed formulas is in general an open problem.

The algebra of the boundary strata of moduli spaces of stable pointed curves has been recently  studied and used to produce  relations in the tautological ring and an explicit description  for certain infinite families of classes (\cite{MR3264769}, \cite{JPPZ}, \cite{MOPPZ}, \cite{PPZ-rspin}). In particular, a collection of classes for all genera satisfying the axioms of a semi-simple cohomological field theory can be reconstructed from some initial conditions, and  explicit expressions can be obtained as  sums of decorated stable graphs representing boundary strata classes. 

In this paper we study the loci ${\mathcal{H}yp}_{2,n}$ of genus $2$ curves with $n$ marked Weierstrass points. These loci are irreducible of codimension $n$ inside the moduli space of genus $2$, $n$-pointed curves $\mathcal{M}_{2,n}$, and their closures may be studied as (a multiple of) the image of the natural forgetful morphism from a space of admissible covers. When non-empty, the class $\left[\overline{\mathcal{H}yp}_{2,n}\right]$ spans an extremal ray of the cone of effective classes of codimension $n$ on $\overline{\mathcal{M}}_{2,n}$ (\cite{CT2}). Hence, an explicit expression for $\left[\overline{\mathcal{H}yp}_{2,n}\right]$ in terms of the standard generators of the tautological ring gives us a bound on all effective classes of codimension $n$ on $\overline{\mathcal{M}}_{2,n}$.

More in general, one would like to consider loci of hyperelliptic curves with marked Weierstrass points and hyperelliptic conjugate pairs. 
These classes exhibit a structure which is similar to that of a  semi-simple cohomological field theory; hence one would like to  express hyperelliptic classes via {\it graph formulas}: sums of decorated stable graphs, such that the decorations are assigned in a uniform way for all graphs. 
Such graph formulas are desirable, since they are compact, programmable, and they do not require choosing a basis for the tautological groups.  It is not clear that hyperelliptic classes admit an expression of this type in general. We focus here on the classes of the loci $\overline{\mathcal{H}yp}_{2,n}$, which are the first non-trivial examples of hyperelliptic classes.
We now outline  the main results of this article: a graph formula  for the class of the closure of ${\mathcal{H}yp}_{2,n}$ inside the moduli space of curves of compact type ${\mathcal{M}}_{2,n}^{ct}$ (for all~$n$), and for the class of the closure $\overline{\mathcal{H}yp}_{2,n}$ in $\Mbar_{2,n}$ (for $n\leq 4$). We refer the reader who is not familiar with tautological classes to \S \ref{bano} for the necessary notation.

\subsection{Graph formula in compact type}
To study the class $\left[\overline{\mathcal{H}yp}_{2,n}\right]$, we realize the locus $\overline{\mathcal{H}yp}_{2,n}$ as a component of the intersection of $\pi_n^*\left(\overline{\mathcal{H}yp}_{2,n-1}\right)$ and $\rho_n^*\left(\overline{\mathcal{H}yp}_{2,1}\right)$. 
Here, \linebreak $\pi_n\colon \overline{\mathcal{M}}_{g,n}\rightarrow \overline{\mathcal{M}}_{g,n-1}$ is the map forgetting the $n$-th marked point, and $\rho_n \colon \overline{\mathcal{M}}_{g,n}\rightarrow \overline{\mathcal{M}}_{g,1}$ is the map  forgetting all but the $n$-th marked point.
After analyzing all components of such intersection, we obtain a recursive description of the class $\left[\overline{\mathcal{H}yp}_{2,n}\right]$ (see Proposition \ref{rec}).
This approach has been used in \cite{CT2} in the case $n\leq 3$. 

As we intersect the locus $\pi_n^*\left(\overline{\mathcal{H}yp}_{2,n-1}\right)$ with the divisor $\rho_n^*\left(\overline{\mathcal{H}yp}_{2,1}\right)$, we do not realize an excess intersection along any of the components of the intersection. While this simplifies the study of the intersection, we pay a price by breaking the symmetry among the marked points. As a result, the outcome of the recursive description is an expression for $\left[\overline{\mathcal{H}yp}_{2,n}\right]$ not symmetric in the marked points. 

To obtain a symmetric expression, one can use relations in the tautological ring, and thus add a contribution which is trivial modulo rational equivalence. To symmetrize the class of the closure of $\mathcal{H}yp_{2,n}$ in ${\mathcal{M}}_{2,n}^{ct}$, we use some manipulations on rational trees (\S \ref{sec:comb}) to obtain the following result.

\begin{thm}
\label{mainthmctintro}
For $n\leq 6$, the class of the closure of $\mathcal{H}yp_{2,n}$ in ${\mathcal{M}}_{2,n}^{ct}$ is 
the degree $n$ component of the following class:
\begin{multline*}
\sum_{\Gamma\in {G}_{2,n}^{ct}}  \frac{1}{|{\rm Aut}(\Gamma)|} {\xi_{\Gamma}}_*
\left(\sum_{j=0}^n j! 
\left[
\prod_{i=1}^n \left(1+3\omega_i\right) 
\prod_{v\in V(\Gamma)} e^{-t\lambda} 
\right.\right.\\
\left.\left.
\prod_{e=(h,h')\in E_{1}(\Gamma)} \frac{1-e^{t(\psi_{h}+\psi_{h'})}}{\psi_{h}+\psi_{h'}} 
\prod_{e=(h,h')\in E_2(\Gamma)} \frac{1}{\psi_h-(1+3\omega_{h'})} 
\right]_{t^{j}}
\right).
\end{multline*}
\end{thm}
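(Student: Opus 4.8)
The plan is to deduce Theorem \ref{mainthmctintro} from the recursive description of $\left[\overline{\mathcal{H}yp}_{2,n}\right]$ (Proposition \ref{rec}) by solving the recursion explicitly in compact type and then packaging the answer as a sum over stable graphs. First I would restrict attention to ${\mathcal{M}}_{2,n}^{ct}$, where the only boundary strata that survive are those indexed by graphs $\Gamma \in G_{2,n}^{ct}$: trees whose vertices are either the (single, by genus reasons) genus-$2$ vertex or genus-$0$ vertices, so that the "interesting" edges $E_2(\Gamma)$ are exactly those meeting the genus-$2$ vertex and $E_1(\Gamma)$ are the edges of the rational subtrees. The recursion of Proposition \ref{rec} expresses $\left[\overline{\mathcal{H}yp}_{2,n}\right]$ in terms of $\pi_n^*\left[\overline{\mathcal{H}yp}_{2,n-1}\right]$ and $\rho_n^*\left[\overline{\mathcal{H}yp}_{2,1}\right]$, together with boundary correction terms coming from the other components of the intersection; I would first record the closed form of the base case $\left[\overline{\mathcal{H}yp}_{2,1}\right]$ (the class of a single Weierstrass point, a known combination of $\psi_1$, $\lambda$ and boundary), and then unwind the recursion.

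The second step is bookkeeping: I would introduce, for each marked point $i$, the class $\omega_i$ (the $\psi$-type class on the genus-$2$ vertex normalized so that $1+3\omega_i$ records the single-Weierstrass-point contribution), and track how each application of $\pi_n^*$ and $\rho_n^*$ either attaches the $n$-th point directly to the genus-$2$ component — contributing a factor $1+3\omega_n$ — or sprouts it off through a chain of rational bubbles. The edge-contribution factors are forced: an edge $e=(h,h')\in E_2(\Gamma)$ attaching a rational tree to the genus-$2$ vertex yields the geometric-series factor $\frac{1}{\psi_h-(1+3\omega_{h'})}$ coming from the excess/normal-bundle contribution at that node after iterating the recursion, while an edge $e=(h,h')\in E_1(\Gamma)$ internal to a rational tree yields $\frac{1-e^{t(\psi_h+\psi_{h'})}}{\psi_h+\psi_{h'}}$, the standard generating function for a chain of $\psi$-classes. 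The $e^{-t\lambda}$ at the genus-$2$ vertex and the overall $\sum_j j!\,[\,\cdot\,]_{t^j}$ encode, respectively, the $\lambda$-contributions and the combinatorial count of orderings that arises when symmetrizing the non-symmetric output of the recursion; here is where the rational-tree manipulations of \S\ref{sec:comb} enter, to show that the naive recursive (non-symmetric) class and the proposed symmetric graph sum differ only by tautological relations supported on compact-type boundary, hence agree in ${\mathcal{M}}_{2,n}^{ct}$.

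The verification then proceeds by induction on $n$: assuming the graph formula for $n-1$, I would pull back along $\pi_n$ (which, on stable graphs, either adds a leg to an existing vertex or subdivides an edge, with the well-known correction $\psi \mapsto \psi - $ boundary), intersect with $\rho_n^*\left[\overline{\mathcal{H}yp}_{2,1}\right]$, identify the resulting terms with the $n$-point graph sum, and check that the $t$-grading and the $j!$ weights transform correctly. The degree-$n$ truncation is exactly what picks out the codimension-$n$ part, since each genus-$0$ vertex and each edge contributes positively to the codimension. The main obstacle I anticipate is the symmetrization step: the recursion is built by singling out the $n$-th point, so matching its asymmetric output to the manifestly $S_n$-invariant closed form requires a careful combinatorial argument on rational trees — showing that resumming over the ways a fixed marked point can be distinguished reconstitutes the symmetric factor $\prod_i(1+3\omega_i)$ together with the $j!\,[\cdot]_{t^j}$ operator — and checking that all the discrepancies are genuinely zero in $A^*({\mathcal{M}}_{2,n}^{ct})$ rather than merely in a quotient. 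The restriction $n\leq 6$ is the range in which ${\mathcal{H}yp}_{2,n}$ is non-empty, so no separate argument is needed there.
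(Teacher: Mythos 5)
Your overall route is the paper's: the base case $\left[\mathcal{H}yp^{ct}_{2,1}\right]=3\omega-\lambda-\delta_1$, induction on $n$ via the recursion of Proposition \ref{rec} restricted to compact type (where the terms $\Phi_i$ and $\Gamma_{i,j}$ vanish, leaving $\left[\mathcal{H}yp^{ct}_{2,n}\right]=\pi_n^*\left[\mathcal{H}yp^{ct}_{2,n-1}\right]\cdot\rho_n^*\left[\mathcal{H}yp^{ct}_{2,1}\right]-\sum_i{\sigma_i}_*\left[\mathcal{H}yp^{ct}_{2,n-1}\right]$), together with the tree combinatorics of \S\ref{sec:comb} to handle the rational tails carrying the $n$-th leg. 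That much matches.

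However, your description of the graph sum contains a concrete error that would derail the verification. You assert that compact-type graphs consist of a single genus-$2$ vertex plus rational vertices, that $E_2(\Gamma)$ consists only of the edges meeting that vertex, and that $E_1(\Gamma)$ consists of the edges internal to the rational subtrees, decorated by ``the standard generating function for a chain of $\psi$-classes.'' All three claims are wrong. $G^{ct}_{2,n}$ also contains graphs with two genus-one vertices, and $E_1(\Gamma)$ is by definition the set of edges separating two genus-one subgraphs; the factor $\frac{1-e^{t(\psi_h+\psi_{h'})}}{\psi_h+\psi_{h'}}$, the vertex factor $e^{-t\lambda}$, and the weights $j!\,[\cdot]_{t^j}$ all arise from Pixton's formula \eqref{pixton} applied to powers of $-\lambda-\delta_1$ (packaged in Proposition \ref{prod} as the product $\rho_1^*(F)\cdots\rho_n^*(F)$), not from counting orderings in a symmetrization step. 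Meanwhile $E_2(\Gamma)$ contains \emph{every} edge of a rational tail, including the internal ones, each decorated by $\frac{1}{\psi_h-(1+3\omega_{h'})}$ with $h$ pointing toward the genus-two core; it is precisely this product over whole rooted trees that the identities of \S\ref{sec:comb} (Lemmas \ref{euler} and \ref{pull-back}, Proposition \ref{eulergen}) are designed to control, once the induction is split according to where the $n$-th leg sits: not on a rational tail (handled via Remark \ref{rewB}); on an external trivalent rational vertex with one other marking (matching the $-{\sigma_i}_*$ correction); or on a tree in $G^{ne}_{0,m+1}$. With the decorations assigned as you describe, the inductive matching of the two sides cannot be carried out, so this part of the argument needs to be redone with the correct meaning of $E_1$ and $E_2$.
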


In this formula, $G_{2,n}^{ct}$ denotes the set of isomorphism classes of trees dual to genus $2$, $n$-pointed curves of compact type. For a graph $\Gamma$, $V(\Gamma)$ is the vertex set of $\Gamma$; $E_1(\Gamma)$ and $E_2(\Gamma)$ partition the set of edges into those that separate two subgraphs of genus $1$, and those that separate a rational tree from a subgraph of genus $2$; $(h,h')$ denotes the pair of half-edges that form a given edge $e$. For $e\in E_2(\Gamma)$, the formula is not symmetric in $(h,h')$: we define $h$ to be the half-edge that points {\it towards} the subgraph of genus $2$,
 and $h'$ the half-edge that points {\it outward}.
We refer the reader to \S \ref{bano} for further details on tautological classes and decorated graphs.

The proof of Theorem \ref{mainthmctintro} remains valid also when $n>6$: the degree $n$ component of the class in the statement  represents the class of an empty locus when $n>6$,  hence vanishes in the tautological ring of ${\mathcal{M}}_{2,n}^{ct}$.

\subsection{Formulas beyond compact type} 
It is more challenging to symmetrize the formula beyond compact type for all $n\leq 6$ by using tautological relations. We achieve this for $n\leq 4$ in \S \ref{sec:NCTn} by using relations in the tautological ring of moduli spaces of rational curves, and an identity for divisor classes on moduli spaces of elliptic curves. The resulting expression is equal to the one in Theorem \ref{mainthmctintro}, provided we extend the sum over a larger set of graphs.

We define $\widetilde{G}_{2,n}$ as the set of isomorphism classes of stable graphs $\Gamma$ with the following properties (see Figure \ref{fig:forbiddengraphs}):
\begin{itemize}

\item[i)] $\Gamma$ has no loops of length one;

\item[ii)] if $\Gamma$ has $3$ non-disconnecting edges, then $\Gamma$ has at least $3$ vertices adjacent to those edges;

\item[iii)] if a vertex $v$ of $\Gamma$ is adjacent to a non-disconnecting edge and is not connected to an elliptic vertex via a path of disconnecting edges, then either $v$ is not trivalent and rational at the same time, or $v$ is adjacent to a vertex not trivalent and rational at the same time;

\item[iv)] if a rational vertex $v$ of $\Gamma$ is adjacent to a non-disconnecting edge, then $v$ is not adjacent to two external, trivalent, rational vertices.

\end{itemize}
In particular, $\widetilde{G}_{2,n}$ contains the set of graphs of compact type ${G}^{ct}_{2,n}$. 

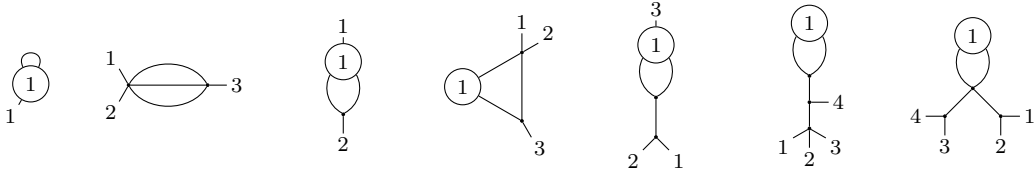
\begin{figure}[htb]
\[
\mbox{
\begin{tikzpicture}[baseline={([yshift=-.5ex]current bounding box.center)}]
      \path(1,0) ellipse (1 and 1);
      \tikzstyle{level 1}=[counterclockwise from=-120,level distance=10mm,sibling angle=60]
      \node [draw,circle,inner sep=2.5] (A0) at (0:1) {$\scriptstyle{1}$} 
      child {node [label=-120: {$\scriptstyle{1}$}]{}};
      \draw (A0) .. controls +(65.000000:1.5) and +(115.000000:1.5) .. (A0);
    \end{tikzpicture}
    }
\quad    
\mbox{    
\begin{tikzpicture}[baseline={([yshift=-.5ex]current bounding box.center)}]
      \path(0,0) ellipse (2 and 1);
      \tikzstyle{level 1}=[counterclockwise from=0,level distance=8mm,sibling angle=120]
      \node[draw,circle,fill] (A0) at (0:1.5) {$\scriptstyle{}$}
      child {node [label=0: {$\scriptstyle{3}$}]{}};
      \tikzstyle{level 1}=[clockwise from=-120,level distance=8mm,sibling angle=120]
      \node[draw,circle,fill] (A1) at (180:1.5) {$\scriptstyle{}$}
      child {node [label=-120: {$\scriptstyle{2}$}]{}}
      child {node [label=120: {$\scriptstyle{1}$}]{}};            
      \path (A0) edge [bend left=-60.000000] node[auto,near start,above=1]{}(A1);
      \path (A0) edge [bend left=60.000000] node[auto,near start,below=1]{}(A1);
      \path (A0) edge [bend left=0.000000] node[auto,near start,below=1]{}(A1);
         \end{tikzpicture}
}        
\quad    
\mbox{    
\begin{tikzpicture}[baseline={([yshift=-.5ex]current bounding box.center)}]
      \path(0,0) ellipse (2 and 1);
      \tikzstyle{level 1}=[counterclockwise from=-90,level distance=8mm,sibling angle=120]
      \node[draw,circle,fill] (A0) at (-90:1) {$\scriptstyle{}$}
      child {node [label=-90: {$\scriptstyle{2}$}]{}};
      \tikzstyle{level 1}=[counterclockwise from=90,level distance=10mm,sibling angle=120]
      \node[draw,circle,inner sep=2.5] (A1) at (90:1) {$\scriptstyle{1}$}
      child {node [label=90: {$\scriptstyle{1}$}]{}};
      \path (A0) edge [bend left=-45.000000] node[auto,near start,above=1]{}(A1);
      \path (A0) edge [bend left=45.000000] node[auto,near start,below=1]{}(A1);
         \end{tikzpicture}
}
\quad    
\mbox{
\begin{tikzpicture}[baseline={([yshift=-.5ex]current bounding box.center)}]
      \path(0,0) ellipse (2 and 2);
      \tikzstyle{level 1}=[counterclockwise from=-60,level distance=10mm,sibling angle=120]
      \node [draw,circle,inner sep=2.5] (A0) at (180:1.5) {$\scriptstyle{1}$};
      \tikzstyle{level 1}=[counterclockwise from=30,level distance=8mm,sibling angle=60]
      \node [draw,circle,fill] (A1) at (60:1.5) {$\scriptstyle{}$}
      child {node [label=30: {$\scriptstyle{2}$}]{}}
      child {node [label=90: {$\scriptstyle{1}$}]{}};
      \tikzstyle{level 1}=[counterclockwise from=-60,level distance=8mm,sibling angle=60]
      \node [draw,circle,fill] (A2) at (-60:1.5) {$\scriptstyle{}$}
      child {node [label=-60: {$\scriptstyle{3}$}]{}};
      \path (A0) edge [] (A1);
      \path (A0) edge [] (A2);
      \path (A1) edge [] (A2);
    \end{tikzpicture}
}
\quad    
\mbox{
\begin{tikzpicture}[baseline={([yshift=-.5ex]current bounding box.center)}]
      \path(0,0) ellipse (2 and 1);
      \tikzstyle{level 1}=[clockwise from=-45,level distance=8mm,sibling angle=90]
      \node[draw,circle,fill] (A0) at (-90:1.5) {$\scriptstyle{}$}
      child {node [label=-45: {$\scriptstyle{1}$}]{}}
      child {node [label=-135: {$\scriptstyle{2}$}]{}};
      \tikzstyle{level 1}=[counterclockwise from=0,level distance=8mm,sibling angle=120]
      \node[draw,circle,fill] (A1) at (0:0) {$\scriptstyle{}$};
      \tikzstyle{level 1}=[counterclockwise from=90,level distance=10mm,sibling angle=120]
      \node[draw,circle,inner sep=2.5] (A2) at (90:2) {$\scriptstyle{1}$}
      child {node [label=90: {$\scriptstyle{3}$}]{}};
      \path (A2) edge [bend left=-45.000000] node[auto,near start,above=1]{}(A1);
      \path (A2) edge [bend left=45.000000] node[auto,near start,below=1]{}(A1);
      \path (A0) edge [] (A1);
         \end{tikzpicture}
}
\quad
\mbox{
\begin{tikzpicture}[baseline={([yshift=-.5ex]current bounding box.center)}]
      \path(0,0) ellipse (2 and 1);
      \tikzstyle{level 1}=[counterclockwise from=-150,level distance=8mm,sibling angle=60]
      \node[draw,circle,fill] (A0) at (-90:2) {$\scriptstyle{}$}
      child {node [label=-150: {$\scriptstyle{1}$}]{}}
      child {node [label=-90: {$\scriptstyle{2}$}]{}}
      child {node [label=-30: {$\scriptstyle{3}$}]{}};
      \tikzstyle{level 1}=[counterclockwise from=0,level distance=8mm,sibling angle=60]
      \node[draw,circle,fill] (A1) at (-90:1) {$\scriptstyle{}$}
      child {node [label=0: {$\scriptstyle{4}$}]{}};
      \tikzstyle{level 1}=[counterclockwise from=0,level distance=8mm,sibling angle=120]
      \node[draw,circle,fill] (A2) at (0:0) {$\scriptstyle{}$};
      \tikzstyle{level 1}=[counterclockwise from=90,level distance=10mm,sibling angle=120]
      \node[draw,circle,inner sep=2.5] (A3) at (90:2) {$\scriptstyle{1}$};
      \path (A3) edge [bend left=-45.000000] node[auto,near start,above=1]{}(A2);
      \path (A3) edge [bend left=45.000000] node[auto,near start,below=1]{}(A2);
      \path (A1) edge [] (A2);
      \path (A0) edge [] (A1);
         \end{tikzpicture}
}
\quad
\mbox{    
\begin{tikzpicture}[baseline={([yshift=-.5ex]current bounding box.center)}]
      \path(0,0) ellipse (2 and 1);
      \tikzstyle{level 1}=[counterclockwise from=-60,level distance=10mm,sibling angle=120]
      \node[draw,circle,inner sep=2.5] (A0) at (90:2) {$\scriptstyle{1}$};
      \node[draw,circle,fill] (A1) at (0:0) {$\scriptstyle{}$};
      \tikzstyle{level 1}=[counterclockwise from=-90,level distance=8mm,sibling angle=90]
      \node[draw,circle,fill] (A2) at (-45:1.5) {$\scriptstyle{}$}
      child {node [label=-90: {$\scriptstyle{2}$}]{}}
      child {node [label=0: {$\scriptstyle{1}$}]{}};      
      \tikzstyle{level 1}=[counterclockwise from=180,level distance=8mm,sibling angle=90]
      \node[draw,circle,fill] (A3) at (-135:1.5) {$\scriptstyle{}$}
      child {node [label=180: {$\scriptstyle{4}$}]{}}
      child {node [label=-90: {$\scriptstyle{3}$}]{}};      
      \path (A0) edge [bend left=-45.000000] node[auto,near start,above=1]{}(A1);
      \path (A0) edge [bend left=45.000000] node[auto,near start,below=1]{}(A1);
      \path (A1) edge [] (A2);
      \path (A1) edge [] (A3);
         \end{tikzpicture}
}
\] 
    \caption{Examples of graphs ruled out by the conditions i--iv defining $\widetilde{G}_{2,n}$. The first two graphs violate respectively i and ii; the third through the sixth graphs are ruled out by iii, and the last graph by iv.}
    \label{fig:forbiddengraphs}
\end{figure}

\begin{thm}
\label{mainthm<=4}
For $n\leq 4$, the class of the closure of $\mathcal{H}yp_{2,n}$ in $\overline{\mathcal{M}}_{2,n}$ is 
the degree $n$ component of the following class:
\begin{multline*}
\sum_{\Gamma\in \widetilde{G}_{2,n}}  \frac{(-1)^{h^1(\Gamma)}}{|{\rm Aut}(\Gamma)|} {\xi_{\Gamma}}_*
\left(\sum_{j=0}^n j! 
\left[
\prod_{i=1}^n \left(1+3\omega_i\right) 
\prod_{v\in V(\Gamma)} e^{-t\lambda} 
\right.\right.\\
\left.\left.
\prod_{e=(h,h')\in E_{1}(\Gamma)} \frac{1-e^{t(\psi_{h}+\psi_{h'})}}{\psi_{h}+\psi_{h'}} 
\prod_{e=(h,h')\in E_2(\Gamma)} \frac{1}{\psi_h-(1+3\omega_{h'})} 
\right]_{t^{j}}
\right).
\end{multline*}
\end{thm}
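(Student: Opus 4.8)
The plan is to deduce Theorem~\ref{mainthm<=4} from the recursive description of Proposition~\ref{rec}, which is valid on all of $\overline{\mathcal{M}}_{2,n}$ and not merely on ${\mathcal{M}}^{ct}_{2,n}$. That recursion realizes $\overline{\mathcal{H}yp}_{2,n}$ as a distinguished component of $\pi_n^*\big(\overline{\mathcal{H}yp}_{2,n-1}\big)\cap\rho_n^*\big(\overline{\mathcal{H}yp}_{2,1}\big)$, and hence expresses $\left[\overline{\mathcal{H}yp}_{2,n}\right]$ in terms of $\left[\overline{\mathcal{H}yp}_{2,n-1}\right]$, the Weierstrass divisor $\left[\overline{\mathcal{H}yp}_{2,1}\right]$, and a correction supported on the remaining components of the intersection. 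Substituting the formula for $n-1$ inductively — all of this being carried out in the strata algebra, so that every class in sight is an explicit sum over decorated stable graphs — produces a tautological class on $\overline{\mathcal{M}}_{2,n}$ that is correct but asymmetric in the marked points, since the recursion singles out the $n$-th point. The task is then to add a class that is trivial modulo rational equivalence so as to bring the result into the stated symmetric graph form, now indexed by the enlarged set $\widetilde{G}_{2,n}\supseteq {G}^{ct}_{2,n}$.

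I would argue by induction on $n$. The base case $n=1$ is the class of the Weierstrass divisor on $\overline{\mathcal{M}}_{2,1}$, which one checks directly against the small index set $\widetilde{G}_{2,1}$. For the inductive step I would split the output of Proposition~\ref{rec} according to whether the underlying stable graph is of compact type or carries a non-disconnecting edge. On the compact-type part the symmetrization is exactly the one performed in the proof of Theorem~\ref{mainthmctintro} via the rational-tree manipulations of \S\ref{sec:comb}, so that part already matches the claimed formula restricted to ${G}^{ct}_{2,n}$, with sign $(-1)^{h^1}=+1$. The non-compact-type contributions are the genuinely new input, and I would symmetrize them with two tools: (a) relations in the tautological ring of $\overline{\mathcal{M}}_{0,m}$ to redistribute $\psi$- and $\omega$-decorations along the rational trees sprouting off the cycle — the same toolkit as in \S\ref{sec:comb} — and (b) a divisor identity on $\overline{\mathcal{M}}_{1,m}$ (ultimately coming from $\psi=\lambda$ and $12\lambda=\delta$ on $\overline{\mathcal{M}}_{1,1}$) to rewrite the decoration at the interface between the genus-$1$ vertex and the non-disconnecting cycle. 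Each such rewriting either produces a term already in the claimed graph form or a boundary term that is absorbed into another graph of $\widetilde{G}_{2,n}$; the sign $(-1)^{h^1(\Gamma)}$ records the parity of the number of non-disconnecting edges, which is the alternating pattern in which these boundary terms accumulate.

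The final step is to identify which graphs survive with nonzero coefficient and to check that they are precisely those satisfying conditions i--iv, each carrying coefficient $\tfrac{(-1)^{h^1(\Gamma)}}{|\mathrm{Aut}(\Gamma)|}$ together with the prescribed edge and marking decorations. I expect the conditions to come out as follows: a one-edge loop forces an $E_1$-type factor onto a self-glued pair of half-edges and its push-forward vanishes, giving condition~i; a theta-configuration on only two vertices sits on a stratum too small to support the decoration produced by the recursion, giving condition~ii; and the trivalent-rational-vertex restrictions iii--iv are exactly the configurations in which the $\overline{\mathcal{M}}_{0,m}$-relations of step~(a) would otherwise output a term supported on a strictly smaller stratum — that is, they pin down where the symmetrization ``bottoms out,'' as illustrated by the graphs in Figure~\ref{fig:forbiddengraphs}. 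A useful consistency check throughout is that setting all non-disconnecting contributions to zero must reproduce Theorem~\ref{mainthmctintro}.

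The main obstacle is the bookkeeping in the inductive step: one must establish simultaneously that the corrected class is symmetric in the $n$ marked points, that the correction is genuinely zero in the tautological ring, and that the graphs surviving are exactly $\widetilde{G}_{2,n}$ with the stated coefficient and decoration. Showing that conditions i--iv are both necessary and sufficient is the combinatorial heart of the argument, as it amounts to understanding precisely which decorated boundary classes vanish or coincide on the small strata of $\overline{\mathcal{M}}_{2,n}$. This is also where the hypothesis $n\le 4$ enters: the rational-tree relations of \S\ref{sec:comb} suffice to symmetrize the non-compact-type contributions only when the rational trees involved are small enough, and for $n\ge 5$ the residual asymmetry cannot be removed by these relations alone — which is why, in contrast to the compact-type statement valid for $n\le 6$, the graph formula beyond compact type is established here only for $n\le 4$.
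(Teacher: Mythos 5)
Your overall route---run the recursion of Proposition \ref{rec}, split the output into compact-type and non-compact-type contributions, dispose of the former via Theorem \ref{mainthmctintro}, and symmetrize the latter with rational-tree relations on $\overline{\mathcal{M}}_{0,m}$ and the identity $\psi=\lambda$ on $\overline{\mathcal{M}}_{1,1}$---is the route the paper takes. The genuine gap is at the point where you assert that, after substituting the inductive hypothesis, ``every class in sight is an explicit sum over decorated stable graphs.'' The correction terms in Proposition \ref{rec} include the classes $\Phi_i$ and $\Gamma_{i,j}$, which are defined geometrically as closures of loci of curves with an elliptic component whose points satisfy the divisorial conditions $2p_k\sim x+y$ (and, for $\Gamma_{i,j}$, a common degree-two pencil condition). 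These are \emph{not} boundary strata decorated by $\psi$- and $\omega$-classes, and they are not reached by the inductive hypothesis as you have set it up. Converting them into the strata algebra is a separate and substantial step: it is the content of Lemma \ref{lem:phigamma} and Theorem \ref{thm:phigamma}, which rely on an intersection-theoretic analysis on spaces of admissible covers (the identity \eqref{admcoverint} and its iterates, with careful multiplicity bookkeeping at the two nodes of the banana configuration) to express $\Phi\Gamma_n=\sum_i\Phi_i+\sum_{i<j}\Gamma_{i,j}$ as a combination of pulled-back classes $\left[\overline{\mathcal{H}yp}_{2,I}\right]$ with $|I|<n$ multiplied by explicit boundary strata. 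Only after that step does the recursion \eqref{eqNCTn} become computable, and it is the resulting explicit symmetric expressions for ${\rm NCT}_3$ and ${\rm NCT}_4$ (Corollaries \ref{cor:NCT3} and \ref{cor:NCT4}) that one finally compares, graph by graph, against $\widetilde{G}_{2,n}$. Without an argument for $\Phi_i$ and $\Gamma_{i,j}$, the induction cannot be carried out.

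Two smaller points. Your heuristic that $(-1)^{h^1(\Gamma)}$ ``records the parity of the number of non-disconnecting edges'' is not literally correct: the banana graph has two non-disconnecting edges but $h^1=1$, and the theta graph has three but $h^1=2$; the sign really is the first Betti number of the graph, which emerges from the explicit computation rather than from a parity count of edges. Also, your claim that for $n\geq 5$ the residual asymmetry \emph{cannot} be removed by these relations overstates what is known---the paper only reports that the symmetrization has not been achieved for $n=5,6$, and indeed leaves open whether the same graph formula holds for a suitable redefinition of $\widetilde{G}_{2,n}$.
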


The graphs of non-compact type in $\widetilde{G}_{2,n}$ and their contributions   are explicitly described in \S \ref{subsec:NCTn}.
We  provide an explicit expression in the case $n=5$ in \S \ref{subsec:NCTn}. At the current stage we have not been able to fully symmetrize this expression using tautological relations. Also for $n=6$, the recursive description in \S \ref{sec:rec} together with Theorem \ref{thm:phigamma} and the analysis in \S \ref{sec:NCTn} produce an explicit expression  for the class $\left[\overline{\mathcal{H}yp}_{2,6}\right]$, although not an immediately symmetric one. 
Symmetrizing these expressions by using tautological  relations  is a possible approach also for $n=5,6$, although a laborious one. We expect that a more conceptual understanding of the structure of these classes  would be a better avenue to obtain symmetric expressions. 
In particular, we pose the question of whether the symmetric expression in Theorem \ref{mainthm<=4} could hold for all $n\leq 6$ for an appropriate  definition of the set $\widetilde{G}_{2,n}$ --- possibly distinct than the one given above when $n=5,6$.

Many of our manipulations on graph formulas for classes on moduli spaces of curves do not require the genus to be $2$ (see for instance \S \ref{sec:prod} and \S \ref{sec:comb}). We envision that our approach could help more generally  find the structure of hyperelliptic classes for arbitrary genera.

\bigskip
\noindent \textit{Acknowledgements.} This project started during the program Combinatorial Algebraic Geometry at the Fields Institute. We would like to thank the organizers and the institute for the excellent working environment. 
We would also like to thank Dawei Chen and Diane Maclagan for helpful discussions, and Nicola Pagani for showing us how to draw stable graphs with the package \texttt{tikz}.

\section{Background and Notation}
\label{bano}
\subsection{Stable graphs and strata classes}
\label{bano1}
We recall  here some of the standard notation for dual graphs of curves; a more comprehensive description can be found, for instance, in \cite{MR3264769}.

Let $G^{rt}_{g,n}$ (respectively $G^{ct}_{g,n}$, or $G_{g,n}$) be the set of isomorphism classes of graphs dual to curves with rational tails (respectively curves of compact type, or stable curves) of genus $g$ with $n$ marked points.

Given a stable graph $\Gamma$, let $V(\Gamma)$ be the set of vertices, endowed with a genus function $g\colon V(\Gamma) \rightarrow \mathbb{Z}_{\geq 0}$ and a valence function $n\colon V(\Gamma) \rightarrow \mathbb{Z}_{\geq 0}$ mapping a vertex $v$ to the valence of $\Gamma$ at $v$; 
let $L(\Gamma)$ be the set of legs, endowed with a bijection to a set of markings; let $H(\Gamma)$ be the set of half-edges, and $F(\Gamma) = L(\Gamma) \cup H(\Gamma)$ be the set of flags of $\Gamma$;
for $g/2\leq i \leq g$, let $E_i(\Gamma)$ denote the set of edges connecting a graph of genus $i$ and a graph of genus $g-i$; the genus of $\Gamma$ is defined as $\sum_{v\in V(\Gamma)} g(v)+h^1(\Gamma).$

A stable graph $\Gamma$ identifies a boundary stratum equal to the image of the  degree $|\textrm{Aut}(\Gamma)|$ glueing map
\[
\xi_{\Gamma} \colon \prod_{v\in V(\Gamma)} \overline{\mathcal{M}}_{g(v), n(v)} =:  \overline{\mathcal{M}}_\Gamma \longrightarrow \overline{\mathcal{M}}_{g,n}.
\]

We also use the following forgetful maps.
Let $\pi_i\colon \overline{\mathcal{M}}_{g,n}\rightarrow  \overline{\mathcal{M}}_{g,[n]\setminus\{i\}}\cong \overline{\mathcal{M}}_{g,n-1}$ be the map forgetting the $i$-th marked point, and $\rho_i \colon \overline{\mathcal{M}}_{g,n}\rightarrow\overline{\mathcal{M}}_{g,\{i\}}\cong \overline{\mathcal{M}}_{g,1}$ be the map  forgetting all but the $i$-th marked point.

\subsection{Divisor classes and decorations}
\label{subsec:dec}
Let $\lambda$ be the first Chern class of the Hodge bundle on $\overline{\mathcal{M}}_{g,n}$, and let $\delta_i$ be the class of the union of the boundary strata of curves with a component of genus $i$ meeting transversally a component of genus $g-i$, for $0\leq i \leq g$. We denote by $\delta$ the class of the boundary of $\overline{\mathcal{M}}_{g,n}$, i.e.~$\delta = \sum_i \delta_i$. The class $\psi_i$ is the first Chern class of the cotangent line at the $i$-th marked point, for $i=1,\dots, n$.
We define $\omega_i:= \rho_i^* (\psi)$, where $\psi$ is the $\psi$-class on $\Mbar_{g,\{i\}}$, for $i=1,\dots, n$. 

Given a stable graph $\Gamma$ and a flag $h\in F(\Gamma)$, let $\psi_h \in A^1(\overline{\mathcal{M}}_{\Gamma})$ be the $\psi$-class at the corresponding marked point, if $h$ is a leg, or at the  shadow 
\footnote{Given a node $n\in C$, we call {\it shadows} of $n$ the two inverse images of $n$ in the normalization map $\nu: \tilde{C}\to C$. Shadows of nodes of $C$ are naturally in bijection with half-edges of the dual graph of $C$.}
of the  node corresponding to $h$, if $h$ is a half-edge. 

The following notation is motivated by the  pull-back of the class $\omega_i$ via  the glueing morphisms $\xi_\Gamma$ (\cite[Lemma 1.9]{blankers2017intersections}).
For a stable graph $\Gamma$, let $C(\Gamma)$ denote the minimum connected genus $g$ subgraph of $\Gamma$ ($C(\Gamma)$ is topologically equivalent to the stable graph obtained after forgetting all the legs of $\Gamma$, and it may have fewer vertices). Denote by $F^{out}(\Gamma)\subset F(\Gamma)$  the subset of flags of $\Gamma \smallsetminus C(\Gamma)$ that {\it point outward}: i.e.~a flag $h$ belongs to $F^{out}(\Gamma)$ if and only if the vertex that it is adjacent to belongs to  a connected subgraph of $\Gamma \smallsetminus \{h\}$ containing $C(\Gamma)$. The function
 \[
w\colon F^{out}(\Gamma)\rightarrow F^{out}(\Gamma)
\]
maps $h\in F^{out}(\Gamma)$ to the flag $w(h)$ in the same connected component of $\Gamma \smallsetminus C(\Gamma)$ and adjacent to $C(\Gamma)$.  All outward pointing flags attached to the same maximal external, rational subgraph have equal image under $w$ (see Figure \ref{fig:opf}).

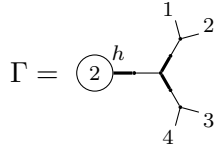
\begin{figure}[htb]
\[\Gamma = 
\mbox{
\begin{tikzpicture}[baseline={([yshift=-.7ex]current bounding box.center)}]
      \path(0,0) ellipse (2 and 2);
      \tikzstyle{level 1}=[counterclockwise from=90,level distance=8mm,sibling angle=60]
      \node [draw,circle,fill] (A0) at (0:0) {};      
      \tikzstyle{level 1}=[counterclockwise from=90,level distance=8mm,sibling angle=60]
      \node [draw,circle,fill] (A01) at (180:1) {};   
      \tikzstyle{level 1}=[counterclockwise from=150,level distance=10mm,sibling angle=60]
          \node [draw,circle,fill] (A12) at (60:0.75) {};  
             \node [draw,circle,fill] (A13) at (-60:0.75) {}; 
      \node [draw,circle,inner sep=2.5] (A1) at (180:2.5) {$\scriptstyle{2}$};
      \tikzstyle{level 1}=[counterclockwise from=15,level distance=8mm,sibling angle=90]
      \node [draw,circle,fill] (A2) at (60:1.5) {}
      child {node [label=15: {$\scriptstyle{2}$}]{}}
      child {node [label=105: {$\scriptstyle{1}$}]{}};
      \tikzstyle{level 1}=[clockwise from=-15,level distance=8mm,sibling angle=90]
      \node [draw,circle,fill] (A3) at (-60:1.5) {}
      child {node [label=-15: {$\scriptstyle{3}$}]{}}
      child {node [label=-105: {$\scriptstyle{4}$}]{}};      
      \path (A0) edge [] (A01);
      \path (A01) edge [very thick] node[auto,above=2, near end, label={90:$\scriptstyle{h}$}]{}(A1);      
            \path (A12) edge [very thick] node[auto,above=2, near end]{}(A0);  
                \path (A13) edge [very thick] node[auto,above=2, near end]{}(A0);  
      \path (A0) edge [] (A2);
      \path (A0) edge [] (A3);             
    \end{tikzpicture}
}
\]
    \caption{The core  $C(\Gamma)$ consists of the vertex of genus $2$. The graph has one maximal external tree with   seven outward pointing flags: the four legs and the the three thickened half-egdes. All outward pointing flags are mapped via $w$ to the half-edge labelled $h$.}
    \label{fig:opf}
\end{figure}

Given $h\in F^{out}(\Gamma)$, let $\omega_h:=\omega_{w(h)} \in A^1(\overline{\mathcal{M}}_{\Gamma})$ be the $\omega$-class corresponding to the flag $w(h)$. In particular, given $i\in L(\Gamma)$ with $i\in w^{-1}(h)$, the class $\omega_h$ is the pull-back of the class $\omega_i\in A^1(\Mbar_{g,n})$ via $\xi_\Gamma$.

\subsection{Decorated graphs} 
\label{nota}
Throughout, we write a stable graph $\Gamma$ decorated with a monomial $\prod_{h\in F(\Gamma)}\psi_h^{k_h}$ as a shorthand for the push-forward ${\xi_\Gamma}_*\left(\prod_{h\in F(\Gamma)}\psi_h^{k_h}\right)$.
For convenience, rational vertices are contracted to points.

We use the following convention on labelling graphs: a graph with some markings omitted stands for the sum of the non-isomorphic graphs obtained by assigning the remaining markings in all possible ways, as illustrated in Figure \ref{fig:dgd}.

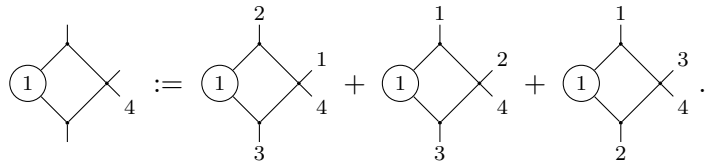
\begin{figure}[htb]
\[
\mbox{
\begin{tikzpicture}[baseline={([yshift=-.7ex]current bounding box.center)}]
      \path(0,0) ellipse (2 and 2);
      \tikzstyle{level 1}=[counterclockwise from=150,level distance=10mm,sibling angle=60]
      \node [draw,circle,inner sep=2.5] (A0) at (180:1.5) {$\scriptstyle{1}$};
      \tikzstyle{level 1}=[counterclockwise from=90,level distance=8mm,sibling angle=60]
      \node [draw,circle,fill] (A1) at (90:1.5) {}
      child {node [label=90: {$\scriptstyle{}$}]{}};
      \tikzstyle{level 1}=[counterclockwise from=-45,level distance=8mm,sibling angle=90]
      \node [draw,circle,fill] (A2) at (0:1.5) {}
      child {node [label=-45: {$\scriptstyle{4}$}]{}}
      child {node [label=45: {$\scriptstyle{}$}]{}};
      \tikzstyle{level 1}=[clockwise from=-90,level distance=8mm,sibling angle=60]
      \node [draw,circle,fill] (A3) at (-90:1.5) {}
      child {node [label=-90: {$\scriptstyle{}$}]{}};      
      \path (A0) edge [] (A1);
      \path (A1) edge [] (A2);
      \path (A2) edge [] (A3);      
      \path (A3) edge [] (A0);            
    \end{tikzpicture}
}
:=
\mbox{
\begin{tikzpicture}[baseline={([yshift=-.7ex]current bounding box.center)}]
      \path(0,0) ellipse (2 and 2);
      \tikzstyle{level 1}=[counterclockwise from=150,level distance=10mm,sibling angle=60]
      \node [draw,circle,inner sep=2.5] (A0) at (180:1.5) {$\scriptstyle{1}$};
      \tikzstyle{level 1}=[counterclockwise from=90,level distance=8mm,sibling angle=60]
      \node [draw,circle,fill] (A1) at (90:1.5) {}
      child {node [label=90: {$\scriptstyle{2}$}]{}};
      \tikzstyle{level 1}=[counterclockwise from=-45,level distance=8mm,sibling angle=90]
      \node [draw,circle,fill] (A2) at (0:1.5) {}
      child {node [label=-45: {$\scriptstyle{4}$}]{}}
      child {node [label=45: {$\scriptstyle{1}$}]{}};
      \tikzstyle{level 1}=[clockwise from=-90,level distance=8mm,sibling angle=60]
      \node [draw,circle,fill] (A3) at (-90:1.5) {}
      child {node [label=-90: {$\scriptstyle{3}$}]{}};      
      \path (A0) edge [] (A1);
      \path (A1) edge [] (A2);
      \path (A2) edge [] (A3);      
      \path (A3) edge [] (A0);            
    \end{tikzpicture}
}
+
\mbox{
\begin{tikzpicture}[baseline={([yshift=-.7ex]current bounding box.center)}]
      \path(0,0) ellipse (2 and 2);
      \tikzstyle{level 1}=[counterclockwise from=150,level distance=10mm,sibling angle=60]
      \node [draw,circle,inner sep=2.5] (A0) at (180:1.5) {$\scriptstyle{1}$};
      \tikzstyle{level 1}=[counterclockwise from=90,level distance=8mm,sibling angle=60]
      \node [draw,circle,fill] (A1) at (90:1.5) {}
      child {node [label=90: {$\scriptstyle{1}$}]{}};
      \tikzstyle{level 1}=[counterclockwise from=-45,level distance=8mm,sibling angle=90]
      \node [draw,circle,fill] (A2) at (0:1.5) {}
      child {node [label=-45: {$\scriptstyle{4}$}]{}}
      child {node [label=45: {$\scriptstyle{2}$}]{}};
      \tikzstyle{level 1}=[clockwise from=-90,level distance=8mm,sibling angle=60]
      \node [draw,circle,fill] (A3) at (-90:1.5) {}
      child {node [label=-90: {$\scriptstyle{3}$}]{}};      
      \path (A0) edge [] (A1);
      \path (A1) edge [] (A2);
      \path (A2) edge [] (A3);      
      \path (A3) edge [] (A0);            
    \end{tikzpicture}
}
+
\mbox{
\begin{tikzpicture}[baseline={([yshift=-.7ex]current bounding box.center)}]
      \path(0,0) ellipse (2 and 2);
      \tikzstyle{level 1}=[counterclockwise from=150,level distance=10mm,sibling angle=60]
      \node [draw,circle,inner sep=2.5] (A0) at (180:1.5) {$\scriptstyle{1}$};
      \tikzstyle{level 1}=[counterclockwise from=90,level distance=8mm,sibling angle=60]
      \node [draw,circle,fill] (A1) at (90:1.5) {}
      child {node [label=90: {$\scriptstyle{1}$}]{}};
      \tikzstyle{level 1}=[counterclockwise from=-45,level distance=8mm,sibling angle=90]
      \node [draw,circle,fill] (A2) at (0:1.5) {}
      child {node [label=-45: {$\scriptstyle{4}$}]{}}
      child {node [label=45: {$\scriptstyle{3}$}]{}};
      \tikzstyle{level 1}=[clockwise from=-90,level distance=8mm,sibling angle=60]
      \node [draw,circle,fill] (A3) at (-90:1.5) {}
      child {node [label=-90: {$\scriptstyle{2}$}]{}};      
      \path (A0) edge [] (A1);
      \path (A1) edge [] (A2);
      \path (A2) edge [] (A3);      
      \path (A3) edge [] (A0);            
    \end{tikzpicture}
}.
\]
    \caption{The convention for interpreting partially  labeled dual graphs. }
    \label{fig:dgd}
\end{figure}

\section{A recursive formula}
\label{sec:rec}

In this section we deduce a recursion that determines the class of $\overline{\mathcal{H}yp}_{2,n}$ by studying the intersection of
$\pi_n^*\left(\overline{\mathcal{H}yp}_{2,n-1}\right)$ and $\rho_n^*\left(\overline{\mathcal{H}yp}_{2,1}\right)$.
In order to describe the components of the intersection,
we introduce the following notation for some effective classes of codimension $n$ in $A^{n}(\overline{\mathcal{M}}_{2,n})$:

\begin{description}
\item[$\Phi_i$ ] For $1\leq i \leq n-1$, we denote by $\Phi_i$ the class of the closure of the locus of curves with an elliptic component meeting in two points $x$ and $y$ a rational component such that  the marked points $p_i$ and $p_n$ are on the rational component and the remaining marked points $p_k$ are on the elliptic component;  we further require that  the condition $2p_k\sim x+y$ is satisfied for all $k\in\{i,n\}^c$.

\item[$\Gamma_{i,j}$] For $1\leq i<j\leq n-1$, we denote by $\Gamma_{i,j}$ the class of the closure of the locus of curves with an elliptic component meeting in two points $x$ and $y$ a rational component such that the marked points $p_i$, $p_j$ and $p_n$ are on the rational component and  the remaining marked points $p_k$ are on the elliptic component; we also require the following divisorial conditions to be satisfied:
\begin{itemize}
\item $2p_i$, $2p_j$, and $x+y$ are in the same pencil of degree two;
\item  $2p_k\sim x+y$ for all $k\in\{i,j,n\}^c$.
\end{itemize}
\end{description}

Recall the maps $\pi_i \colon \overline{\mathcal{M}}_{g,n} \rightarrow \overline{\mathcal{M}}_{g,n-1}$ and $\rho_i \colon \overline{\mathcal{M}}_{g,n} \rightarrow \overline{\mathcal{M}}_{g,1}$, for $i=1,\dots,n$, defined in \S \ref{bano1}. Let $\sigma_i\colon \overline{\mathcal{M}}_{g,n-1} \rightarrow \overline{\mathcal{M}}_{g,n}$ be the $i$-th section, for $i=1,\dots,n-1$. The image of $\sigma_i$ is the boundary divisor of curves with a rational tail containing solely the marked points $p_i$ and $p_n$.

\begin{prop}
\label{rec}
For $2\leq n \leq 6$, one has the following equality in $A^{n}(\overline{\mathcal{M}}_{2,n})$ 
\[
\pi_n^*\left(\overline{\mathcal{H}yp}_{2,n-1}\right)  \cdot  \rho_n^*\left(\overline{\mathcal{H}yp}_{2,1}\right) 
\equiv 
\overline{\mathcal{H}yp}_{2,n} 
+ \sum_{i=1}^{n-1} {\sigma_{i}}_\ast\left( \overline{\mathcal{H}yp}_{2,n-1} \right)
+ \sum_{i=1}^{n-1} \Phi_i
+ \sum_{1\leq i<j\leq n-1} \Gamma_{i,j}.
\]
\end{prop}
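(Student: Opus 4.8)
The plan is to identify the scheme-theoretic intersection of $\pi_n^*\left(\overline{\mathcal{H}yp}_{2,n-1}\right)$ with the divisor $\rho_n^*\left(\overline{\mathcal{H}yp}_{2,1}\right)$, show it is reduced along each component, enumerate those components, and match each one with a term on the right-hand side. Since $\rho_n^*\left(\overline{\mathcal{H}yp}_{2,1}\right)$ is a divisor, it suffices to compute, for each irreducible component $Z$ of $\pi_n^*\left(\overline{\mathcal{H}yp}_{2,n-1}\right)$, the restriction of the divisor to $Z$, and then to collect the resulting codimension-$n$ cycles. The starting observation is that $\pi_n^*\left(\overline{\mathcal{H}yp}_{2,n-1}\right)$ parametrizes genus-$2$ curves in which the first $n-1$ marked points are Weierstrass points on a genus-$2$ subcurve, with the $n$-th point free (possibly on a rational tail); a generic such curve is smooth with $p_n$ a general point. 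Restricting the condition ``$p_n$ is a Weierstrass point on the genus-$2$ subcurve'' ($=\rho_n^*\left(\overline{\mathcal{H}yp}_{2,1}\right)$) to this locus, the generic behavior gives exactly $\overline{\mathcal{H}yp}_{2,n}$; the remaining, non-generic contributions are supported over the boundary of $\overline{\mathcal{M}}_{2,n-1}$ or over loci where $p_n$ collides with one of the $p_i$, and these produce the correction terms.

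First I would set up the excess/non-excess analysis: working on the open locus where the genus-$2$ part is smooth and irreducible, the divisor $\rho_n^*\left(\overline{\mathcal{H}yp}_{2,1}\right)$ meets $\pi_n^*\left(\overline{\mathcal{H}yp}_{2,n-1}\right)$ transversally in the interior, yielding $\mathcal{H}yp_{2,n}$ with multiplicity one -- this is the content of the claim (already used in \cite{CT2} for $n\le 3$) that no excess intersection is realized along any component. I would then enumerate the extra components: (a) the section loci $\sigma_i(\overline{\mathcal{H}yp}_{2,n-1})$, arising when $p_n$ degenerates onto a rational tail carrying only $p_i$ and $p_n$, while the remaining points stay Weierstrass on the genus-$2$ curve -- here the Weierstrass condition on $p_n$ on the genus $2$ component is automatically implied, so this locus lies in the intersection; (b) the loci $\Phi_i$ and (c) the loci $\Gamma_{i,j}$, which appear from boundary strata of $\overline{\mathcal{M}}_{2,n-1}$ where the genus-$2$ curve has become of the elliptic-plus-rational type, so that the Weierstrass conditions on the various marked points translate into the divisorial conditions $2p_k\sim x+y$ (respectively $2p_i,2p_j,x+y$ in a common $g^1_2$) defining $\Phi_i$ and $\Gamma_{i,j}$. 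For each such candidate component I would verify: it is contained in the intersection (by the modular description), it has the expected codimension $n$, and it appears with multiplicity one (reducedness). The multiplicity-one check is where one uses that $\overline{\mathcal{H}yp}_{2,1}$ is a reduced divisor and that the relevant maps $\rho_n$, $\pi_n$ are flat/smooth enough for the pullbacks to stay reduced; a local computation in terms of branch points of the admissible cover, as in \cite{CT2}, pins down the coefficient.

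The main obstacle is the completeness of the enumeration of components together with the reducedness (multiplicity-one) verification: one must argue that these are \emph{all} the components -- i.e. that every codimension-$n$ piece of the intersection is either $\overline{\mathcal{H}yp}_{2,n}$, or a $\sigma_i$-term, or a $\Phi_i$, or a $\Gamma_{i,j}$, with nothing hidden in deeper boundary strata -- and that none of them carries a higher multiplicity. I would handle this by stratifying $\overline{\mathcal{M}}_{2,n}$ by the topological type of the genus-$2$ subcurve and the distribution of marked points, and on each stratum describe $\pi_n^*\left(\overline{\mathcal{H}yp}_{2,n-1}\right)\cap\rho_n^*\left(\overline{\mathcal{H}yp}_{2,1}\right)$ explicitly using the admissible-cover interpretation: a component of codimension $n$ forces the genus-$2$ subcurve to have at most one node and constrains the marked points to the listed configurations, because each node of the genus-$2$ part, each coincidence $p_i=p_n$, and each Weierstrass condition imposes one codimension, and a dimension count leaves no room for anything else for $n\le 6$ (which is also where the bound $n\le 6$ enters, via non-emptiness of $\overline{\mathcal{H}yp}_{2,n}$). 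Reducedness along $\overline{\mathcal{H}yp}_{2,n}$, $\sigma_i(\overline{\mathcal{H}yp}_{2,n-1})$, $\Phi_i$ and $\Gamma_{i,j}$ is then checked at a generic point of each, using that the defining conditions are cut out transversally there.
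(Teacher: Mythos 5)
Your overall decomposition of the intersection into $\overline{\mathcal{H}yp}_{2,n}$, the section loci, and the $\Phi_i$, $\Gamma_{i,j}$ agrees with the paper's, but the mechanism by which you determine the coefficients is genuinely different, and it is exactly there that your plan has a gap. The paper does \emph{not} compute multiplicities by local transversality at generic points of each component. Instead it argues by induction on $n$ (base cases $n=2,3$ from \cite{CT2}): after identifying the supports of the components and invoking symmetry in the first $n-1$ markings, it writes the intersection as $a\,\overline{\mathcal{H}yp}_{2,n}+b\sum\sigma_{i*}(\cdot)+c\sum\Phi_i+d\sum\Gamma_{i,j}$ with unknown coefficients, pushes forward by $\pi_{n-1}$ using explicit formulas such as ${\pi_{n-1}}_*(\overline{\mathcal{H}yp}_{2,n})=(6-(n-1))\overline{\mathcal{H}yp}_{2,n-1}$ and ${\pi_{n-1}}_*(\Gamma_{i,n-1})=\Phi_i$, compares with the inductive hypothesis, and uses numerical independence of the four classes to force $a=b=c=d=1$. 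This is precisely engineered to avoid the step you defer to ``a local computation in terms of branch points... pins down the coefficient.''

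That step is not safe to assert. The components $\Phi_i$ and $\Gamma_{i,j}$ are supported inside the boundary, where the two admissible covers degenerate simultaneously, and in closely analogous intersections in this very paper (identity \eqref{admcoverint} in the proof of Theorem \ref{thm:phigamma}) the multiplicity along such boundary components is $2$, not $1$ --- each node of the elliptic-plus-rational configuration contributes separately. So reducedness along $\Phi_i$ and $\Gamma_{i,j}$ is a genuinely delicate claim that your proposal neither proves nor reduces to a cited result; $\rho_n^*(\overline{\mathcal{H}yp}_{2,1})$ being a reduced divisor and $\rho_n$, $\pi_n$ being flat does not by itself control the local intersection multiplicity along a locus contained in the boundary of $\pi_n^*(\overline{\mathcal{H}yp}_{2,n-1})$. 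Your completeness-of-enumeration argument by dimension count is fine in spirit (the paper also only asserts ``one checks that there are no other components''), but to make your route into a proof you would either have to carry out the local multiplicity analysis at a generic point of each of $\Phi_i$ and $\Gamma_{i,j}$ in an admissible-covers model, or replace it by the paper's push-forward-and-independence argument.
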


\begin{proof}
The cases $n=2,3$ were established in \cite{CT2}. 
We prove the statement for $4\leq n \leq 6$ by induction. 
The intersection of $\pi_n^*\left(\overline{\mathcal{H}yp}_{2,n-1}\right)$ and $\rho_n^*\left(\overline{\mathcal{H}yp}_{2,1}\right)$ consists of curves $(C,p_1,\dots,p_n)$ stably equivalent to a curve with an admissible double cover ramified at $p_1,\dots, p_{n-1}$ and with a possibly different double cover ramified at $p_n$. By definition, the component $\overline{\mathcal{H}yp}_{2,n}$ consists of curves which, up to stable equivalence, have a {\it single} admissible double cover ramified at all marked points. The component $ {\sigma_{i}}_\ast\left( \overline{\mathcal{H}yp}_{2,n-1} \right)$ corresponds to the case when the point $p_n$ coincides with the point $p_i$, for $1\leq i \leq n-1$. The classes $\Phi_i$ (resp.~$\Gamma_{i,j}$) are supported on curves stably equivalent to curves having two {\it distinct} admissible double covers: one ramified at the first $n-1$ marked points, and one ramified at all marks except the point $p_i$ (resp.~except the points $p_i,p_j$). Finally, one checks that there are no other components of codimension $n$ in this intersection. Note that the left-hand side of the formula in the statement is symmetric with respect to the first $n-1$ marked points. It follows that we have
\begin{align}
\label{poush}
\pi_n^*\left(\overline{\mathcal{H}yp}_{2,n-1}\right)  \cdot  \rho_n^*\left(\overline{\mathcal{H}yp}_{2,1}\right) 
\equiv 
a \,\overline{\mathcal{H}yp}_{2,n} 
+ b \sum_{i=1}^{n-1}  {\sigma_{i}}_\ast\left( \overline{\mathcal{H}yp}_{2,n-1} \right)
+ c \sum_{i=1}^{n-1} \Phi_i
+ d \sum_{1\leq i<j\leq n-1} \Gamma_{i,j}
\end{align}
for some coefficients $a,b,c,d$. Forgetting the $(n-1)$-th marked point, and then relabeling the mark $p_n$ as $p_{n-1}$, we obtain\footnote{
The relabeling of the point $p_n$ prevents notation to become overly cumbersome, although it introduces a minor abuse of notation: the symbol $\pi_{n-1}$
denotes two different maps on the opposite sides of (\ref{pusheachterm}). On the left-hand side $\pi_{n-1}\colon\overline{\mathcal{M}}_{2,n} \to \overline{\mathcal{M}}_{2,n-1}$, whereas on the right-hand side $\pi_{n-1}\colon\overline{\mathcal{M}}_{2,n-1}\to \overline{\mathcal{M}}_{2,n-2}$.}

\begin{eqnarray} \label{pusheachterm}
\begin{aligned}
{\pi_{n-1 }}_\ast \left( \pi_n^*\left(\overline{\mathcal{H}yp}_{2,n-1}\right)  \cdot  \rho_n^*\left(\overline{\mathcal{H}yp}_{2,1}\right)  \right) &=&  (6-(n-2))\, \pi_{n-1}^*\left(\overline{\mathcal{H}yp}_{2,n-2}\right)  \cdot  \rho_{n-1}^*\left(\overline{\mathcal{H}yp}_{2,1}\right),  \\
{\pi_{n-1 }}_\ast \left(\overline{\mathcal{H}yp}_{2,n}\right) &=& (6-(n-1))\, \overline{\mathcal{H}yp}_{2,n-1}, \\
{\pi_{n-1 }}_\ast \left( {\sigma_{i}}_\ast \left( \overline{\mathcal{H}yp}_{2,n-1} \right)\right) &=& 
(6-(n-2))\, {\sigma_{i}}_\ast \left( \overline{\mathcal{H}yp}_{2,n-2} \right) \quad\mbox{ for $i<n-1$,}\\
{\pi_{n-1 }}_\ast \left({\sigma_{n-1}}_\ast \left( \overline{\mathcal{H}yp}_{2,n-1} \right) \right) &=& \overline{\mathcal{H}yp}_{2,n-1}, \\
{\pi_{n-1 }}_\ast \left( \Phi_i \right) &=& (4-(n-2)+1)\,\Phi_{i} \quad\mbox{ for $i<n-1$,}\\
{\pi_{n-1 }}_\ast \left( \Phi_{n-1} \right) &=& 0,\\
{\pi_{n-1 }}_\ast \left( \Gamma_{i, j} \right) &=& (4-(n-3)+1)\, \Gamma_{i,j} \quad\mbox{ for $j<n-1$,}\\
{\pi_{n-1 }}_\ast \left( \Gamma_{i, n-1} \right) &=& \Phi_{i}. 
\end{aligned}
\end{eqnarray}
Applying ${\pi_{n-1 }}_\ast$ to \eqref{poush}, one obtains:
\begin{multline}
\label{fourcla}
 (8-n)\, \pi_{n-1}^*\left(\overline{\mathcal{H}yp}_{2,n-2}\right)  \cdot  \rho_{n-1}^*\left(\overline{\mathcal{H}yp}_{2,1}\right)
 \equiv
 \left(a (7-n) +b\right) \overline{\mathcal{H}yp}_{2,n-1} \\
 + b(8-n) \sum_{i=1}^{n-2}  {\sigma_{i}}_\ast \left( \overline{\mathcal{H}yp}_{2,n-2} \right)  
+ \left(c(7-n)+d \right)  \sum_{i=1}^{n-2} \Phi_{i}+
  d(8-n)\sum_{1\leq i<j\leq n-2} \Gamma_{i,j}.
\end{multline}
The inductive hypothesis gives another expression 
 for the  left-hand side of  \eqref{fourcla}
 as a linear combination of the  four classes on the right-hand side  \eqref{fourcla}, with all coefficients equal $(8-n)$.
 One may show that these  classes are numerically independent, for example by exhibiting four classes of complementary degree that produce an upper diagonal matrix of intersection numbers. 
Setting the coefficients of the two linear combinations to agree, we deduce that $a=b=c=d=1$, and the statement follows.
\end{proof}

An expression for the sum of the classes $\Phi_i$ and $\Gamma_{i,j}$ is obtained in Theorem \ref{thm:phigamma}.

\section{Products of divisor classes}
\label{sec:prod}

In this section, we study explicit formulas for certain products of divisor classes on moduli spaces of curves.
Consider a divisor class in ${\rm Pic}(\overline{\mathcal{M}}_{g,n})$  of the form
\[
D = \sum_{i=1}^n a_i \psi_i + c \lambda - b \delta
\]
for some coefficients $a,c,$ and $b$. 
The following formula expresses powers of $D$ as a sum over graphs. The formula appeared in the study of double ramification classes in \cite{JPPZ}. 
Pixton's formula for the exponential of divisor classes is
\begin{align}
\label{pixton}
e^D = \sum_{\Gamma \in G_{g,n}} \frac{1}{|{\rm Aut}(\Gamma)|}{\xi_{\Gamma}}_* \left( 
\prod_{i=1}^n e^{a_i \psi_i} 
\prod_{v\in V(\Gamma)} e^{c\lambda}
\prod_{e=(h,h')\in E(\Gamma)} \frac{1-e^{b(\psi_h + \psi_{h'})}}{\psi_h + \psi_{h'}} \right).
\end{align}

In the next proposition, we apply \eqref{pixton} to solve a similar problem. 
Recall the maps $\rho_i \colon \overline{\mathcal{M}}_{g,n} \rightarrow \overline{\mathcal{M}}_{g,1}$, for $i=1,\dots,n$, defined in \S \ref{bano1}.
Denote by $G^{nrt}_{g,n}\subset G^{ct}_{g,n}$ the subset of  isomorphism classes of graphs of compact type
with no  rational tails.

\begin{prop} 
\label{prod}
Consider a divisor class in ${\rm Pic}({\mathcal{M}}^{ct}_{g,1})$ with $g\geq 1$ of the form
\[
F = a \psi + c \lambda -  b \delta
\]
for some coefficients $a,c,$ and $b$. 
The product
\begin{align}
\label{rhos}
\rho_1^* (F)  \cdots  \rho_n^* (F)
\end{align}
in $A^n({\mathcal{M}}^{ct}_{g,n})$ coincides with the degree $n$ component of the following class
\[
\sum_{\Gamma \in G^{nrt}_{g,n}} \frac{1}{|{\rm Aut}(\Gamma)|}{\xi_{\Gamma}}_* \left( 
\sum_{j=0}^n j! \left[
\prod_{i=1}^n (1+a \omega_i) 
\prod_{v\in V(\Gamma)} e^{ct\lambda}
\prod_{(h,h')\in E(\Gamma)} \frac{1-e^{b t(\psi_h + \psi_{h'})}}{\psi_h + \psi_{h'}} 
\right]_{t^j}
\right).
\]
\end{prop}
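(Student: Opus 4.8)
The plan is to expand the product $\rho_1^*(F)\cdots\rho_n^*(F)$ one divisor at a time and organize the resulting boundary strata exactly as in the proof of Pixton's formula \eqref{pixton}. The first step is to record the pullbacks $\rho_i^*\psi=\omega_i$ and $\rho_i^*\lambda=\lambda$, and to identify $\rho_i^*\delta$. Since a one-pointed curve of compact type has no rational tails, $\delta$ on $\overline{\mathcal{M}}^{ct}_{g,1}$ is a sum of boundary divisors of curves splitting into two components of \emph{positive} genus; hence every component of $\rho_i^*\delta$ is a boundary divisor $\delta_{h,S}$ of $\overline{\mathcal{M}}^{ct}_{g,n}$ with $1\le h\le g-1$ — in particular with no rational component at all, since a rational component would be contracted by $\rho_i$. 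This is the geometric source of the restriction to $G^{nrt}_{g,n}$: any boundary stratum occurring in $\rho_1^*(F)\cdots\rho_n^*(F)$ is a component of an intersection of such divisors, so each of its edges is separating with positive genus on both sides, which is precisely the condition defining $G^{nrt}_{g,n}$; one must still check the converse, that every such graph occurs with the stated coefficient.

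Given $\Gamma\in G^{nrt}_{g,n}$, I would next determine the coefficient of ${\xi_\Gamma}_*(\text{monomial})$ by expanding $\prod_{i=1}^n\bigl(a\,\omega_i+c\,\lambda-b\,\rho_i^*\delta\bigr)$. Each of the $n$ factors contributes in one of three ways: an $a\,\omega_i$ — this produces $\prod_{i=1}^n(1+a\,\omega_i)$, with only the first power of each $\omega_i$ appearing because the factors are linear (this is why $e^{a\psi_i}$ in \eqref{pixton} is replaced by $1+a\,\omega_i$), and $\omega_i$ restricts on $\overline{\mathcal{M}}_\Gamma$ to the class $\omega_h$ of \S\ref{subsec:dec} by \cite{blankers2017intersections}; a $c\,\lambda$, assigned to one vertex of $\Gamma$; or one of the boundary divisors in $-b\,\rho_i^*\delta$, which either creates one of the edges of $\Gamma$ or, if that edge is already present, self-intersects it, contributing $-\psi_h-\psi_{h'}$ by the excess-intersection formula for boundary divisors. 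Collecting the contributions of a fixed edge $e=(h,h')$ reproduces, up to Taylor denominators, the edge factor $\bigl(1-e^{bt(\psi_h+\psi_{h'})}\bigr)/(\psi_h+\psi_{h'})$ of \eqref{pixton}, and those at a fixed vertex reproduce $e^{ct\lambda}$; the outer operation $\sum_{j=0}^n j!\,[\,\cdot\,]_{t^j}$ then replaces the Taylor normalization of \eqref{pixton} — adapted to the exponential of a divisor — by the normalization appropriate to a product of $n$ divisors, the $j!$ cancelling the denominators and $j$ recording how many of the factors contribute a $c\,\lambda$ or a boundary divisor rather than an $\omega_i$; the truncation $j\le n$ and the extraction of the degree $n$ part reflect that there are exactly $n$ factors.

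For a rigorous argument I would organize this as an induction on $n$. The inductive step rests on the identity $\rho_1^*F\cdots\rho_n^*F=\pi_n^*\bigl(\rho_1^*F\cdots\rho_{n-1}^*F\bigr)\cdot\rho_n^*F$, which holds because $\rho_i$ factors through $\pi_n$ for $i<n$; combined with $\pi_n^*\omega_i=\omega_i$, $\pi_n^*\lambda=\lambda$, $\rho_n^*\psi=\omega_n$, the standard description of $\pi_n^*$ on a decorated stable graph (either append the $n$-th leg to a vertex, or insert it on an edge through a new bivalent rational vertex — neither move creates a rational tail), and the self-intersection/excess formula for multiplying a decorated boundary stratum by the divisors composing $\rho_n^*\delta$, one checks that these operations carry the $(n-1)$-term formula to the $n$-term formula.

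The main obstacle is the bookkeeping rather than any new idea. Concretely one must: (i) track the $\psi$-class corrections produced by $\pi_n^*$ at the shadows of the nodes and at the vertex receiving the new leg, and verify that they are absorbed by the closed forms of the edge series and of the factors $1+a\,\omega_i$; (ii) pin down the multiplicities in $\rho_i^*\delta$ and the combinatorial weights so that summing over the intermediate histories of the expansion produces exactly $\tfrac{1}{|\mathrm{Aut}(\Gamma)|}\sum_{j=0}^n j!\,[\cdots]_{t^j}$ and not some other normalization; and (iii) confirm both directions of the $G^{nrt}_{g,n}$ statement — that no graph with a rational tail ever acquires a nonzero coefficient, and that every rational-tail-free graph does occur. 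All of this parallels the proof of \eqref{pixton}, so the content lies in adapting that argument carefully.
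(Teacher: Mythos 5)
Your structural reading of the formula is sound, and the route you sketch --- re-deriving a Pixton-type expansion for this particular product by induction on $n$ with explicit excess-intersection bookkeeping --- would in principle succeed, since it is essentially how \eqref{pixton} itself is established. But it is genuinely different from, and far heavier than, the paper's argument, and as written it is a plan rather than a proof: the items (i)--(iii) you defer are exactly where all the work lies. The paper instead uses \eqref{pixton} as a black-box \emph{identity} rather than adapting its \emph{proof}, via two observations. First, $\rho_i^*(\delta)=\delta^{nrt}$ is the \emph{same} divisor class for every $i$ (the boundary divisors carrying a rational component are contracted by every $\rho_i$, and the remaining ones dominate the boundary of $\overline{\mathcal{M}}^{ct}_{g,1}$ regardless of which marking is retained); hence, setting $B:=c\lambda-b\,\delta^{nrt}$, the product \eqref{rhos} is literally $\prod_{i=1}^n(a\omega_i+B)=\sum_{j=0}^n e_{n-j}(a\omega_1,\dots,a\omega_n)\cdot B^j$, with $e_k$ the elementary symmetric polynomials. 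Second, $B^j=j!\,[e^{tB}]_{t^j}$ is read off from \eqref{pixton} applied to $tB$, and since $B$ involves no rational-tail divisors the sum restricts to $G^{nrt}_{g,n}$; recombining $\sum_j e_{n-j}(a\omega_\bullet)B^j$ with the degree-$n$ truncation yields the factor $\prod_i(1+a\omega_i)$ and the operator $\sum_j j!\,[\cdot]_{t^j}$ in the statement, with no new intersection-theoretic computation. Your proposal records the individual pullbacks but never exploits the $i$-independence of $\rho_i^*\delta$, which is the one observation that collapses the problem to an application of \eqref{pixton}; without it you are forced to re-prove that formula, which is why your write-up ends with the essential verifications still open. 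I would recommend restructuring along the paper's lines, or, if you keep the inductive route, actually carrying out the self-intersection computation in your step (ii), since that is the entire content.
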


\begin{proof}
For $i=1,\dots,n$, one has $\rho_i^*(\psi)=\omega_i$, $\rho_i^*(\lambda)=\lambda$ and $\rho_i^*(\delta)=\delta^{nrt}$, where $\delta^{nrt}$ denotes the sum of classes of all boundary divisors of curves whose dual graphs have two vertices both of positive genus. 
Let $B:= c \lambda -  b\delta^{nrt} \in {\rm Pic}({\mathcal{M}}^{ct}_{g,n})$. 
We can rewrite (\ref{rhos}) as
\[
\prod_{i=1}^n (a\omega_i+B) = \sum_{j=0}^n e_{n-j}(a\omega_1,\dots , a \omega_n) \cdot B^j,
\]
where $e_i$ is the elementary symmetric polynomial of degree $i$. 
We compute the powers $B^j$ by some minor adaptations of  formula (\ref{pixton}): first of all, since $B$ contains no rational tails, no term in any power of $B$ needs to be supported on a graph with rational tails. Hence we restrict our summation to the classes of graphs in $G^{nrt}_{g,n}$. Second, since we are interested in just the power $B^j$, we weight the degree $j$ part of $\eqref{pixton}$ by $j!$. The statement of the proposition follows.
\end{proof}

\section{Combinatorial formulas on  moduli spaces of rational curves}
\label{sec:comb}

In this section we collect some formulas on  $\overline{\mathcal{M}}_{0,n+1}$ that will help us organize the rational tails in the computation of the class of genus $2$ curves with marked, distinct Weierstrass points. We separate them in an independent section, as they  follow from elementary combinatorics of trees. Recall that a \emph{stable tree} is a tree that arises as the dual graph of a stable rational pointed curve. Combinatorially, this means that all vertices are at least trivalent.

Denote by $G_{0,n+1}$ the set of stable trees with $n+1$ labeled leaves. For $T \in G_{0,n+1}$, let $V(T)$ denote the set of vertices,  and $E(T)$ the set of edges of $T$.

\begin{lem}
One has
\begin{align}
\label{chi}
\sum_{T \in G_{0,n+1}} (-1)^{|E(T)|} 
=(-1)^{n}(n-1)!.
\end{align}
\end{lem}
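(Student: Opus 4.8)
The plan is to recognize the left-hand side of \eqref{chi} as an Euler characteristic computation and evaluate it recursively. First I would set $a_n := \sum_{T \in G_{0,n+1}} (-1)^{|E(T)|}$ and observe that each stable tree $T$ with at least one edge has a well-defined set of edges; collapsing all edges gives the single-vertex tree (the unique element for which the summand is $+1$). The idea is to stratify $G_{0,n+1}$ by the combinatorial type of the vertex $v_0$ carrying, say, the leaf labeled $n+1$, together with the partition of the remaining legs and subtrees hanging off $v_0$.

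The cleanest route, though, is a generating-function / recursion argument. Fix the leaf $n+1$ and the vertex $v_0$ it lies on. Removing $v_0$ disconnects $T$ into the remaining subtrees attached at the other flags of $v_0$; if $v_0$ has valence $k+1$ (so $k$ flags other than the leg $n+1$), then the $n$ remaining legs are distributed into $k$ nonempty blocks $S_1,\dots,S_k$ (unordered, since the flags at $v_0$ are interchangeable up to the automorphism accounting already folded into working with isomorphism classes), and on each block $S_m$ with $|S_m|\geq 2$ we attach an arbitrary stable tree on $|S_m|$ legs plus one extra leg (the attaching flag), while a block with $|S_m|=1$ contributes a single leg directly to $v_0$ with no new edge. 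Each block of size $\geq 2$ contributes one edge (the edge from $v_0$ to that subtree's root) plus the internal edges of that subtree. Stability of $v_0$ requires $k \geq 2$. Translating this into the exponential generating function $A(x) = \sum_{n\geq 1} a_n \frac{x^n}{n!}$ — taking care that $a_1 = 1$ (the tree with one vertex and legs $\{1,2\}$ when $n=1$) — I would obtain a functional equation of the shape expressing $A$ (or rather the series counting trees-with-a-marked-root-flag) in terms of itself, of the form $B = \frac{x}{1 - B'}$-type, where $B(x) = \sum_{n \ge 1} (-1)^{\cdots} \tfrac{x^n}{n!}$ counts stable trees with a distinguished root leg. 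Solving and extracting coefficients should yield $a_n = (-1)^n (n-1)!$.

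Alternatively — and this is the argument I would actually write up, since it avoids generating-function bookkeeping — I would prove \eqref{chi} by induction on $n$ using a single leg-forgetting map. Consider the forgetful map $\pi\colon G_{0,n+1} \to G_{0,n}$ that drops the leg labeled $n$ and stabilizes. Over a fixed stable tree $T' \in G_{0,n}$, the fiber $\pi^{-1}(T')$ consists of: (a) trees obtained by attaching leg $n$ to an existing vertex of $T'$ — these have the same number of edges as $T'$; (b) trees obtained by sprouting a new trivalent vertex in the middle of an existing edge of $T'$ and attaching leg $n$ there — these have one more edge than $T'$; (c) trees obtained by sprouting a new trivalent vertex on an existing leg of $T'$ — these also have one more edge than $T'$. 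If $T'$ has $|V(T')|$ vertices, $|E(T')|$ edges, and $n$ legs (the legs of $T'$ being $\{1,\dots,n-1\}$ together with the leg that will become $n+1$'s neighbor — here one must be careful: $G_{0,n}$ should be read as stable trees with legs $\{1,\dots,n-1,n+1\}$), then the signed count over the fiber is
\[
\sum_{T \in \pi^{-1}(T')} (-1)^{|E(T)|} = (-1)^{|E(T')|}\left( |V(T')| - |E(T')| - n \right) = (-1)^{|E(T')|}\,(1 - n),
\]
using the tree identity $|V(T')| - |E(T')| = 1$ and noting $T'$ has $n$ legs. Summing over $T' \in G_{0,n}$ and applying the inductive hypothesis $\sum_{T'} (-1)^{|E(T')|} = (-1)^{n-1}(n-2)!$ gives $a_{n} = (1-n)\cdot(-1)^{n-1}(n-2)! = (-1)^n (n-1)!$, completing the induction; the base case $n=2$ (or $n=1$) is immediate by direct enumeration.

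The main obstacle I anticipate is getting the fiber analysis of $\pi$ exactly right, in particular the stabilization subtleties: forgetting leg $n$ can render a trivalent vertex unstable, so the map $\pi$ is only defined after contraction, and conversely the three cases (a),(b),(c) above must be checked to be mutually exclusive and exhaustive, and the count of edges in each must account correctly for whether the new vertex is created on an edge versus a leg versus simply placed at an old vertex. Once the local contribution $(1-n)$ is pinned down uniformly over all $T'$, the induction is immediate, so essentially all the work is in this one combinatorial lemma about fibers of the forgetful map.
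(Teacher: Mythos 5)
Your induction via the leg-forgetting map is exactly the paper's proof: the authors also compute the signed fiber sum of $\pi_{n+1}^G$ over a fixed $T'\in G_{0,n}$ as $(-1)^{|E(T')|}\left(|V(T')|-|E(T')|-n\right)=(-1)^{|E(T')|}(1-n)$ and close the induction from the base case $n=2$. The fiber description you worried about (attach at a vertex, or subdivide an edge or a leg, the latter two each creating one new edge) is precisely the one they use, so your write-up is correct and essentially identical.
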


\begin{proof}
After verifying  the formula for $n=2$ by direct inspection, assume it holds  for $n$. Consider the forgetful morphism $\pi_{n+1}^G\colon G_{0,n+1} \to G_{0,n}$, which forgets the $(n+1)$-st leaf, and stabilizes the resulting tree,  if necessary. For a tree $T\in G_{0,n}$, any graph in the inverse image 
$({\pi_{n+1}^G})^{-1}(T)$ is obtained by attaching the $(n+1)$-st leaf at a vertex, or at an internal point of an edge or leaf of $T$. In the last two cases, a new compact edge and a new vertex are formed in the process.
As a result, we have the following equality among formal linear combinations of graphs in $G_{0,n}$
\begin{equation}
\sum_{T \in G_{0,n+1}} (-1)^{|E(T)|} \pi_{n+1}^{G}(T) = \sum_{{T'} \in G_{0,n}} (-1)^{|E(T')|} \left(|V({T'})| -|E({T'})| - n \right) {T'}.
\end{equation}
Formula \eqref{chi} follows by observing $|V({T'})| -|E({T'})| = 1$, and evaluating the above equality via the linearly-extended constant function $G_{0,n} \to \{1\}$.
\end{proof}

\begin{rem} Formula \eqref{chi} may be interpreted as the Euler characteristic of $\mathcal{M}_{0,n+1}^{trop}$, the moduli space of tropical $(n+1)$-pointed rational curves (\cite{Sam}).  There is also a striking similarity to the formula for the Euler characteristic of the uncompactified moduli space of pointed rational curves $\chi({\mathcal{M}}_{0,n+2})=(-1)^{n-1}(n-1)!$; we are not aware of any direct geometric relationship between these two quantities.
\end{rem}

We relabel the $(n+1)$-st leaf as $r$, and view $G_{0,n+1}$ as the set of stable, \textit{rooted} trees with the leaf~$r$ identifying the \textit{root vertex}, and $n$ additional leaves.  For $i = 1, \ldots, n-1$, we denote by
${\sigma^{G}_i}\colon G_{0,n} \to G_{0,n+1}$ the function that 
attaches the leaf $n$ to an internal point of the leaf $i$, thus creating a new edge and a new external, trivalent vertex incident to the leaves $i$ and $n$. 
We denote 
\[
G_{0,n+1}^{ne} := G_{0,n+1} \smallsetminus \bigcup_{i=1}^{n-1} \sigma_i^{G}(G_{0,n}),
\]
i.e.~the set of stable rational trees such that the leg $n$ is not incident to a non-root, external trivalent rational vertex.

\begin{lem}
\label{Gne}
For $n\geq 3$, one has
\[
\sum_{T \in G_{0,n+1}^{ne}} (-1)^{|E(T)|} =0.
\]
\end{lem}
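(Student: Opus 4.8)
The plan is to mimic the inductive argument used for formula \eqref{chi}, but carefully track how the forgetful map $\pi_{n+1}^G$ interacts with the excluded set $\bigcup_{i=1}^{n-1}\sigma_i^G(G_{0,n})$. First I would establish the base case $n=3$ by hand: list the finitely many stable trees on $\{1,2,3,r\}$, discard those in which leaf $3$ is attached to a non-root trivalent rational vertex (i.e.\ those of the form $\sigma_1^G(T)$ or $\sigma_2^G(T)$), and check that the signed count $\sum (-1)^{|E(T)|}$ over what remains is $0$. This is a routine finite verification.

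For the inductive step, assume the identity for $n$ and consider $\pi_{n+1}^G\colon G_{0,n+2}\to G_{0,n+1}$, which forgets the leaf $n+1$ (here I am running the induction on the number of \emph{non-root} leaves, relabelling as needed). As in the proof of \eqref{chi}, for each $T'\in G_{0,n+1}$ the fiber of $\pi_{n+1}^G$ over $T'$ consists of the trees obtained by attaching the leaf $n+1$ at an existing vertex or at an interior point of an edge or leaf of $T'$, and the signed count over the fiber telescopes: summing $(-1)^{|E(T)|}$ over $(\pi_{n+1}^G)^{-1}(T')$ gives $(-1)^{|E(T')|}\bigl(|V(T')|-|E(T')|-(n+1)\bigr)=(-1)^{|E(T')|}\cdot(-n)$, using $|V(T')|-|E(T')|=1$. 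The key point is to intersect this computation with the constraint defining $G^{ne}$. A tree $T\in G_{0,n+2}$ lies in $G_{0,n+2}^{ne}$ precisely when leaf $n$ is not attached to a non-root external trivalent vertex; I would split the fiber analysis into two cases according to whether the attaching of leaf $n+1$ does or does not disturb the status of leaf $n$. Attaching $n+1$ at an interior point of leaf $n$ creates a new trivalent vertex carrying exactly $\{n,n+1\}$, which makes leaf $n$ incident to a non-root external trivalent vertex — so such trees are \emph{excluded} from $G^{ne}_{0,n+2}$ but their image under $\pi_{n+1}^G$ need not be excluded from $G^{ne}_{0,n+1}$; conversely, forgetting $n+1$ from a tree where $n$ was attached to a trivalent vertex also carrying one other leaf $k\neq n$ can, after stabilization, create an excluded configuration in $G_{0,n+1}$. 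Bookkeeping these correction terms is the crux.

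The clean way to package this is: restrict the forgetful map to $\pi_{n+1}^G\colon G_{0,n+2}^{ne}\to G_{0,n+1}^{ne}$ wherever it is defined, identify the finitely many ``bad'' fibers where the naive telescoping count is altered, and show the bad contributions cancel among themselves. Concretely, for $T'\in G_{0,n+1}^{ne}$ with leaf $n$ attached to a vertex $v'$, the fiber contribution inside $G^{ne}$ differs from $(-1)^{|E(T')|}(-n)$ only by the single term where $n+1$ would be placed on leaf $n$ (which is now forbidden), i.e.\ the corrected fiber sum is $(-1)^{|E(T')|}(-n) - (-1)^{|E(T')|+1} = (-1)^{|E(T')|}(1-n)$. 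Meanwhile one must add back the trees of $G_{0,n+2}^{ne}$ whose image lies \emph{outside} $G_{0,n+1}^{ne}$: these are exactly trees where, before forgetting, $n$ sits at a trivalent vertex together with $n+1$ and a single edge — but such trees have leaf $n$ incident to a non-root external trivalent vertex, hence are \emph{not} in $G^{ne}_{0,n+2}$ either, so there is nothing to add back. Thus
\[
\sum_{T\in G_{0,n+2}^{ne}}(-1)^{|E(T)|}
=\sum_{T'\in G_{0,n+1}^{ne}}(-1)^{|E(T')|}(1-n),
\]
and by the inductive hypothesis the right-hand side is $(1-n)\cdot 0 = 0$, completing the induction.

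The main obstacle I anticipate is exactly this fiber bookkeeping: making sure that \emph{every} way in which attaching or forgetting the last leaf changes membership in the ``$n$ not at a non-root trivalent vertex'' set is accounted for, including degenerate small-valence cases (e.g.\ when $v'$ is trivalent, or when stabilization after forgetting merges vertices). It may be cleaner to avoid the case analysis entirely by choosing to forget a leaf \emph{other} than $n$ — say leaf $n-1$ — since then the excluded-set condition involves only leaf $n$, which is untouched by $\pi^G$; the only subtlety is that forgetting $n-1$ can destabilize a vertex carrying $\{n-1,n\}$, and one checks directly that the corresponding fiber terms still telescope to give a factor $(1-n)$ or similar. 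I would try both routes and present whichever yields the shorter argument.
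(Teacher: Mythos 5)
Your proposal takes a genuinely different, and much harder, road than the paper, and the inductive step as written has a real gap. The paper's proof is essentially immediate from the definitions: $G_{0,n+1}^{ne}$ is by construction the complement in $G_{0,n+1}$ of the \emph{disjoint} union of the images of the $n-1$ injective maps $\sigma_i^G$, and since $\sigma_i^G(T)$ has exactly one more edge than $T$, one gets
$\sum_{T\in G_{0,n+1}}(-1)^{|E(T)|}=\sum_{T\in G_{0,n+1}^{ne}}(-1)^{|E(T)|}-(n-1)\sum_{T\in G_{0,n}}(-1)^{|E(T)|}$;
substituting the already established formula \eqref{chi} for the two full sums gives $0$ with no induction, no base case, and no fiber analysis.

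The concrete gap is your claim that there is ``nothing to add back,'' i.e.\ that no tree of $G_{0,n+2}^{ne}$ maps under the forgetful map to a tree outside $G_{0,n+1}^{ne}$. (First, note that by the paper's convention the constrained leg of $G_{0,m+1}^{ne}$ is the \emph{last} leg $m$, so in $G_{0,n+2}^{ne}$ the constraint is on leg $n+1$, the very leg you forget; your argument only makes sense after the relabelling you allude to, in which the forgotten leg is unconstrained.) With that fixed, consider a tree $T$ in which the constrained leaf sits at a $4$-valent external vertex together with another leaf $i$, the forgotten leaf, and one edge. Since that vertex is not trivalent, $T$ is in the image of no $\sigma_j^G$, so $T\in G_{0,n+2}^{ne}$; yet forgetting the extra leaf produces exactly the forbidden trivalent configuration, so $\pi^G(T)\notin G_{0,n+1}^{ne}$. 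The same happens when $i$ and the forgotten leaf sit on their own trivalent vertex hanging off the trivalent vertex carrying the constrained leaf: forgetting destabilizes and contracts, again landing outside $G_{0,n+1}^{ne}$. These two add-back terms occur in pairs differing by one edge and hence cancel in the signed sum, which is why your final identity $\sum_{G_{0,n+2}^{ne}}(-1)^{|E|}=(1-n)\sum_{G_{0,n+1}^{ne}}(-1)^{|E|}$ is nonetheless true; but that cancellation is the actual crux of the inductive step, and your argument replaces it with the false assertion that such trees do not lie in $G_{0,n+2}^{ne}$. As written, the step fails there; with the cancellation supplied, your route works but remains far longer than the two-line counting argument above.
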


\begin{proof}
The set  $G_{0,n+1}^{ne}$ is the complement in $G_{0,n+1}$ of the \textit{disjoint} union $\bigcup_{i=1}^{n-1} \sigma_i^{G}(G_{0,n})$, and the functions $\sigma_i^G$ are injective.  Recalling that $\sigma_i^G(T)$ has one more edge than $T$, we obtain that
\[
\sum_{T \in G_{0,n+1}} (-1)^{|E(T)|} = 
\sum_{T \in G_{0,n+1}^{ne}} (-1)^{|E(T)|} - (n-1) \sum_{T \in G_{0,n}} (-1)^{|E(T)|}.
\]
The statement thus follows from (\ref{chi}).
\end{proof}

Given a tree $T$ in $G_{0,n+1}^{ne}$, every non-root vertex $v$ has a unique half-edge $h(v)$ \textit{directed towards the root}, i.e.~incident to a half-edge of a connected subtree containing the root. For the root vertex, we define $h(v) = r$. We have thus defined a function 
\[
h\colon V(T) \to F(T).
\]

The next lemma is an immediate geometric reformulation of Lemma \ref{Gne}. 
Let $\overline{\mathcal{M}}_{0,n+1}$ be the moduli space of stable $(n+1)$-pointed rational curves $(R,p_1,\dots, p_n, p_r)$.

\begin{lem}
\label{euler}
One has
\begin{align*}
\label{vanishingtopdeg}
\sum_{T \in G_{0,n+1}^{ne}}
\int_{\overline{\mathcal{M}}_{0,n+1}}  
{\xi_{T}}_* 
\left(
\prod_{v\in V(T)} \frac{1}{\psi_{h(v)}-1}
\right) =0.
\end{align*}
\end{lem}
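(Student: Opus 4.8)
The plan is to reduce the integral identity to the purely combinatorial statement of Lemma \ref{Gne} by expanding the class $\prod_{v\in V(T)} \frac{1}{\psi_{h(v)}-1}$ into its homogeneous components and integrating. Write $\frac{1}{\psi_{h(v)}-1} = -\sum_{k\geq 0}\psi_{h(v)}^{k}$, so that for a fixed tree $T\in G_{0,n+1}^{ne}$ one has
\[
\int_{\overline{\mathcal{M}}_{0,n+1}} {\xi_T}_*\left(\prod_{v\in V(T)}\frac{1}{\psi_{h(v)}-1}\right)
= (-1)^{|V(T)|}\sum_{\substack{(k_v)_{v\in V(T)}\\ \sum_v k_v = \dim \overline{\mathcal{M}}_T}} \int_{\overline{\mathcal{M}}_T}\prod_{v\in V(T)}\psi_{h(v)}^{k_v},
\]
since ${\xi_T}_*$ followed by integration over $\overline{\mathcal{M}}_{0,n+1}$ is just integration over $\overline{\mathcal{M}}_T = \prod_{v} \overline{\mathcal{M}}_{0,n(v)}$, and only the top-degree part survives. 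The key observation is that, because each vertex $v$ contributes a single marked point $h(v)$ (the flag directed towards the root) and $\overline{\mathcal{M}}_T$ is a product over vertices, the integral $\int_{\overline{\mathcal{M}}_T}\prod_v \psi_{h(v)}^{k_v}$ factors as a product over vertices of one-point $\psi$-integrals $\int_{\overline{\mathcal{M}}_{0,n(v)}}\psi_{h(v)}^{k_v}$, which is $1$ precisely when $k_v = n(v)-3 = \dim\overline{\mathcal{M}}_{0,n(v)}$ and $0$ otherwise. Hence there is exactly one admissible tuple $(k_v)$, and the whole integral collapses to $(-1)^{|V(T)|}$.

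The second step is bookkeeping on signs: I claim $(-1)^{|V(T)|} = (-1)^{|E(T)|}$ up to a global sign depending only on $n$. Indeed $T$ is a tree, so $|V(T)| - |E(T)| = 1$, giving $(-1)^{|V(T)|} = -(-1)^{|E(T)|}$. Therefore
\[
\sum_{T \in G_{0,n+1}^{ne}}\int_{\overline{\mathcal{M}}_{0,n+1}} {\xi_T}_*\left(\prod_{v\in V(T)}\frac{1}{\psi_{h(v)}-1}\right)
= -\sum_{T \in G_{0,n+1}^{ne}} (-1)^{|E(T)|},
\]
and the right-hand side vanishes for $n\geq 3$ by Lemma \ref{Gne}. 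This establishes the lemma.

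The only point requiring care — and the step I expect to be the main obstacle to write cleanly — is verifying that the function $h\colon V(T)\to F(T)$ genuinely assigns to each vertex a \emph{distinct} flag lying on that vertex, so that the integrand really does factor as a product of one-point integrals over the factors $\overline{\mathcal{M}}_{0,n(v)}$ with no repeated or shared insertions. This is exactly what the construction preceding Lemma \ref{euler} guarantees: every non-root vertex has a unique half-edge directed towards the root, incident to that vertex, and the root vertex is assigned the leg $r$, which is also incident to it. Since distinct vertices have distinct towards-the-root flags, the insertions are independent across the factors of the product $\overline{\mathcal{M}}_T = \prod_v \overline{\mathcal{M}}_{0,n(v)}$, and the factorization of the integral is legitimate. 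Once this is in place, the stable-tree condition $n(v)\geq 3$ ensures each one-point integral $\int_{\overline{\mathcal{M}}_{0,n(v)}}\psi^{n(v)-3}=1$ is the classical string/dilaton-type evaluation, and the argument goes through verbatim.
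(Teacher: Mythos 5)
Your proposal is correct and follows essentially the same route as the paper: both reduce each tree's contribution to a product of one-point $\psi$-integrals equal to $(-1)^{|V(T)|}$ (the paper phrases this via the string equation as $\int_{\overline{\mathcal{M}}_{0,m}}\frac{1}{\psi_i-1}=-1$ per vertex, which is exactly your top-degree term of the geometric series), then use $|V(T)|-|E(T)|=1$ and conclude by Lemma \ref{Gne}. Your closing remark that the flags $h(v)$ are distinct and each lies on its own factor of $\overline{\mathcal{M}}_T$ is the right point to check and is handled implicitly in the paper.
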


\begin{proof}
From the string equation, one has 
\[
\int_{\overline{\mathcal{M}}_{0,m}} 
 \frac{1}{\psi_i-1} 
 =- \int_{\overline{\mathcal{M}}_{0,m}} \psi_i^{m-3}=-1,
\]
for all classes $\psi_i$ on $\overline{\mathcal{M}}_{0,m}$. This implies
\begin{align*}
\int_{\overline{\mathcal{M}}_{0,n+1}} 
{\xi_{T}}_* 
\left(
\prod_{v\in V(T)} \frac{1}{\psi_{h(v)}-1}
\right) 
={}& {\xi_{T}}_* 
\left(
\prod_{v\in V(T)} \int_{\overline{\mathcal{M}}_{0,{\rm val}(v)}}  \frac{1}{\psi_{h(v)}-1} 
\right)\\
={}& (-1)^{|V(T)|} 
= (-1)^{|E(T)|+1}
\end{align*}
for each $T$.
The statement thus follows from Lemma \ref{Gne}.
\end{proof}

We will need the following  extension of Lemma \ref{euler}. 
Recall the forgetful map $\pi_n\colon \overline{\mathcal{M}}_{0,n+1} \rightarrow \overline{\mathcal{M}}_{0,n}$.

\begin{lem}
\label{pull-back}
In $A^*(\overline{\mathcal{M}}_{0,n+1})$, one has
\begin{equation}
\label{pullb}
\pi_n^*\left( \sum_{T \in G_{0,n}}
{\xi_{T}}_* 
\left(
\prod_{v\in V(T)} \frac{1}{\psi_{h(v)}-1}
\right) \right)
=
\sum_{T \in G_{0,n+1}^{ne}}
{\xi_{T}}_* 
\left(
\prod_{v\in V(T)} \frac{1}{\psi_{h(v)}-1}
\right).
\end{equation}
\end{lem}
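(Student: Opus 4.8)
The plan is to compute both sides by pulling back through $\pi_n$ one stable tree at a time, tracking how the classes $\prod_v \frac{1}{\psi_{h(v)}-1}$ behave. Fix $T \in G_{0,n}$ with root leaf $r$. The key input is the standard comparison formula for $\psi$-classes under the forgetful map: on $\overline{\mathcal{M}}_{0,n+1}$ one has $\pi_n^*\psi_i = \psi_i - D_{in}$, where $D_{in} = \sigma_i$ is the divisor where the leaves $i$ and $n$ collide (and $\pi_n^*\psi_r = \psi_r - D_{rn}$). Moreover $\pi_n^* [\xi_T] = \sum [\xi_{T'}]$, where the sum is over the trees $T'$ in the fiber $(\pi_n^G)^{-1}(T)$; these are obtained from $T$ by attaching the leg $n$ either at an existing vertex $v$ of $T$, or by sprouting a new trivalent vertex on an edge or a leg of $T$. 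My plan is to push the product $\prod_{v\in V(T)}\frac{1}{\psi_{h(v)}-1}$ through this pullback and identify the result, term by term, with the contributions of the trees $T'\in G^{ne}_{0,n+1}$ lying over $T$.

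First I would note that the set $G_{0,n+1}^{ne}$ maps onto $G_{0,n}$ under $\pi_n^G$: by definition $G_{0,n+1}^{ne}$ excludes exactly the trees where $n$ sits on a new external trivalent vertex attached to a leg $i$ (i.e.\ the images of the $\sigma_i^G$), so the fiber of $\pi_n^G|_{G^{ne}}$ over $T$ consists of: (a) the trees where $n$ is attached directly at a vertex $v$ of $T$, and (b) the trees where $n$ sprouts a new trivalent vertex on an internal edge of $T$. In case (a) the directed-towards-root function $h$ is unchanged on the old vertices, and the vertex carrying $n$ simply has its valence raised by one, so $\frac{1}{\psi_{h(v)}-1}$ is literally the same class (no $\psi$ at the leg $n$ appears). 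In case (b), the new trivalent vertex $u$ has $h(u)$ equal to the half-edge of the subdivided edge pointing toward the root, and the two pieces of the old edge split the factor; here I expect a telescoping/partial-fractions identity of the shape $\frac{1}{\psi_h - 1}$ on $T$ pulling back to $\sum_{T'}(\text{product over the refined vertices})$, exactly because $\pi_n^*\psi_h = \psi_h - D$ and the excess/boundary corrections are absorbed by the new-vertex contributions. This is the mechanism by which the string-equation factor $\frac{1}{\psi-1}$ is "stable" under $\pi_n^*$ up to spreading over the fiber.

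Concretely, I would organize the computation as follows. (1) Reduce to a single $T$ by linearity and the fact that $\pi_n^*\xi_{T*} = \sum_{T'\mapsto T}\xi_{T'*}(\pi_n^*(-)|_{T'})$ compatibly with the gluing maps; on each factor $\overline{\mathcal{M}}_{0,\mathrm{val}(v)}$ this becomes the genuinely local question on a single $\overline{\mathcal{M}}_{0,m}$, namely computing $\pi_{m-1}^*\!\big(\frac{1}{\psi_h-1}\big)$ on $\overline{\mathcal{M}}_{0,m}$ and matching it against $\sum\frac{1}{\psi_{h(u)}-1}$ over the vertices created by attaching the new leg inside this $\overline{\mathcal{M}}_{0,m}$-factor. (2) Prove that local identity on $\overline{\mathcal{M}}_{0,m}$: using $\pi^*\psi_h = \psi_h - D_{\text{new}}$ and $D_{\text{new}}^2 = -\psi D_{\text{new}}$-type relations, expand $\frac{1}{\psi_h - D_{\text{new}} - 1} = \frac{1}{\psi_h-1} + \frac{D_{\text{new}}}{(\psi_h-1)^2} + \cdots$ and recognize the tail as the pushforward from the new boundary divisor of $\prod\frac{1}{\psi_{h(u)}-1}$ over the two new vertices. (3) Finally, handle the vertex at which the leg $n$ is attached directly (case (a)): there the leg $n$ carries no $\psi$-factor and raising the valence does not change $\frac{1}{\psi_{h(v)}-1}$, so this term matches the "attach at a vertex" tree in $G^{ne}_{0,n+1}$. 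Assembling (1)–(3) over all vertices of $T$ and all $T$ gives \eqref{pullb}.

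The main obstacle I anticipate is bookkeeping in step (2): making the partial-fractions expansion of $\frac{1}{\pi^*\psi_h - 1}$ genuinely match, with correct combinatorial multiplicities and signs, the sum of $\prod_u \frac{1}{\psi_{h(u)}-1}$ over the (possibly several) trees in $G^{ne}_{0,n+1}$ refining a given vertex-star of $T$ — in particular ensuring that precisely the trees excluded from $G^{ne}$ (new trivalent vertex on a \emph{leg}, not an edge) are the ones that do \emph{not} appear, which is what distinguishes $G^{ne}_{0,n+1}$ from all of $G_{0,n+1}$. Once the single-vertex local identity is pinned down, the global statement is a formal consequence of the projection formula and the multiplicativity of the classes across the vertices of $T$.
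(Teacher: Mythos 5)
Your starting point is the same as the paper's: expand $1/(\psi_{h(v)}-1)$ as a truncated geometric series, use the comparison $\pi_n^*\psi_h=\psi_h-D_{hn}$ (the paper's relation \eqref{psikock}, applied iteratively), and observe that the correction divisors $D_{hn}$ account for the trees where the leg $n$ sprouts a new vertex on an edge. But there is a genuine gap in your case (a)/step (3): it is \emph{not} true that attaching the leg $n$ at a vertex $\hat v$ leaves the factor $\frac{1}{\psi_{h(\hat v)}-1}$ unchanged. On $\overline{\M}_{0,m}$ this factor equals $-(1+\psi_{h}+\dots+\psi_{h}^{m-3})$, while on $\overline{\M}_{0,m+1}$ it equals $-(1+\psi_h+\dots+\psi_h^{m-2})$: raising the valence adds a new top-degree term. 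Consequently the right-hand side of \eqref{pullb} contains, at the vertex carrying the leg $n$ (and, after iterating, on whole subtrees decorated by top-degree powers of $\psi$), terms that simply do not arise from the pullback on the left-hand side. Your proposed ``local identity on $\overline{\M}_{0,m}$'' therefore does not close up: the discrepancy cannot be absorbed vertex-locally.

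The paper resolves this with a step you have no analogue of (its \textit{Step 2}): for each term on the right-hand side, it forms the maximal subtree $T_0$ containing $\hat v$ all of whose vertices carry top-degree $\psi$-powers, groups together all terms agreeing outside $T_0$, and applies Lemma \ref{euler} \emph{locally} to that group. This is a signed count over several different trees $T\in G^{ne}_{0,n+1}$ at once, and it shows that everything cancels except the terms where $\hat v$ is trivalent and external in $T_0$; only those survivors are then matched bijectively (the paper's \textit{Step 3}) with the two kinds of terms produced by \eqref{psikock} on the left. Without this global cancellation your term-by-term matching fails. Two smaller points: your enumeration of the fiber of $\pi_n^G$ over $T$ inside $G^{ne}_{0,n+1}$ omits the tree obtained by sprouting a new (root) vertex on the root leg $r$, which does occur as a correction term when $\hat v$ is the root; and the expansion $\frac{1}{\psi_h-D-1}=\frac{1}{\psi_h-1}+\frac{D}{(\psi_h-1)^2}+\cdots$ requires controlling $D_{hn}^2$ via the $\psi$-class on the new component, which the paper sidesteps by iterating the exact relation \eqref{psikock} instead.
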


\begin{proof}
Since $\pi_n$ has one-dimensional fibers,  the left-hand side of \eqref{pullb} is a sum of positive dimensional classes. On the  right-hand side, the term of maximal degree  vanishes by Lemma \ref{euler}. Therefore \eqref{pullb} holds in degree $n-2$. 

For the remaining terms, we will prove \eqref{pullb} in three steps: the first is to observe that the left-hand side expands to a sum of trees $T \in G_{0,n+1}^{ne}$ decorated with powers of $\psi$ classes  on half-edges of type $h(v)$ for some $v$, i.e.~the type of summands on the right-hand side  of \eqref{pullb}. Second, we  apply Lemma \ref{euler} ``locally" to identify other groups of terms on the right-hand side of \eqref{pullb} with vanishing contributions.  In the third  step, we identify the remaining terms in the two sides of \eqref{pullb}.

\vskip4pt

\noindent {\bf Step 1.} A summand  $\mu$ of 
\[
\sum_{T \in G_{0,n}}
{\xi_{T}}_* 
\left(
\prod_{v\in V(T)} \frac{1}{\psi_{h(v)}-1}
\right)
\]
consists of a stratum identifying a rooted tree  $T$, and supporting a monomial of the form $\prod_{v\in V(T)} \psi_{h(v)}^{k_v}$ (ignoring the sign, which is irrelevant to this part of the argument). The pull-back $\pi_n^\ast (\mu)$ can be expressed as supported on strata corresponding to trees obtained by adding the $n$-th leaf to one of the vertices of $T$. Denote $\hat{T}$ one of these graphs, and let $\hat{v}$ be the vertex of $\hat{T}$ to which the $n$-th leaf is incident. The contribution of the graph $\hat{T}$ in  $\pi_n^\ast (\mu)$ is 
\begin{align}
\label{contriThat}
{\xi_{\hat{T}}}_* \left(\pi_{n}^\ast \left(\psi_{h(\hat{v})}^{k_{\hat{v}}} \right)\prod_{v\not = \hat{v}} \psi_{h(v)}^{k_v} \right), 
\end{align}
where $\pi_n$  denotes the forgetful morphism $\pi_n\colon \overline{\mathcal{M}}_{0, \textrm{val}(\hat{v})}\to \overline{\mathcal{M}}_{0, \textrm{val}(\hat{v})-1}$.
Using the  relation
 \begin{equation}
 \label{psikock}
\pi_{n}^\ast \left(\psi_{h(\hat{v})}^{k_{\hat{v}}} \right) = \psi_{h(\hat{v})}^{k_{\hat{v}}} - \pi_{n}^\ast \left(\psi_{h(\hat{v})}^{k_{\hat{v}}-1} \right) \textrm{Im}\left(\sigma_{h(\hat{v})}\right),
\end{equation}
where $\sigma_{h(\hat{v})} \colon \overline{\mathcal{M}}_{0, \textrm{val}(\hat{v})-1}\to \overline{\mathcal{M}}_{0, \textrm{val}(\hat{v})}$ is the $h(\hat{v})$-th section,
 one can further decompose \eqref{contriThat} as a sum of two terms, one corresponding to the graph $\hat{T}$, and one to a new graph obtained from $\hat{T}$ by attaching the $n$-th leaf at an interior point of the edge containing the half-edge $h(\hat{v})$. In both cases, the resulting graph belongs to $G_{0,n+1}^{ne}$, and it is decorated by powers of $\psi$ classes at half-edges of type $h(v)$.

\vskip4pt

\noindent {\bf Step 2.} A summand $\mu$ on the right-hand side of \eqref{pullb} consists of a graph $T\in  G_{0,n+1}^{ne}$ and a monomial of the form $(-1)^{|V(T)|}\prod_{v\in V(T)} \psi_{h(v)}^{k_v}$.
For any such term $\mu$, denote by $\hat{v}$ the vertex to which the $n$-th leaf is incident, and by $T_0$ the maximal subtree containing $\hat{v}$ such that each vertex of $T_0$ supports a power of a $\psi$ class of top degree. Note in particular that $T_0 = \varnothing$ if and only if $\psi_{h(\hat{v})}$ does not have degree $\textrm{val}(\hat{v})-3$. We now  fix a monomial $\mu'$ with $T_0 \not= \varnothing$, and restrict our attention to all monomials  $\mu$ that agree with $\mu'$ when restricted to the complement of (their corresponding) $T_0$. Applying Lemma \ref{euler} to the sum of these terms, we obtain that the only summands that survive are those where $\hat{v}$ is trivalent and external in $T_0$. See Figure \ref{fig:step2}. 
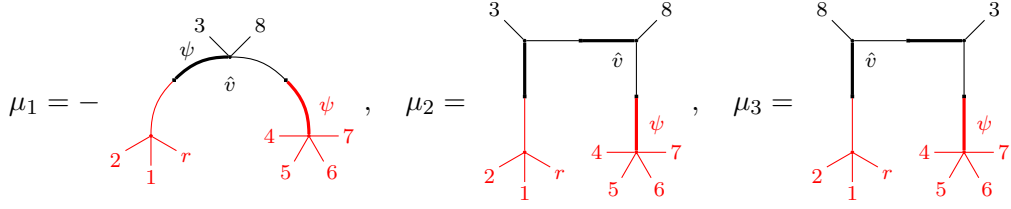
\begin{figure}[tb]
\[
\mu_1= -
\mbox{
\begin{tikzpicture}[baseline={([yshift=-.7ex]current bounding box.center)}]
      \path(0,0) circle (1 and 1);
      \tikzstyle{level 1}=[clockwise from=135,level distance=12mm,sibling angle=90]
      \node[draw,circle, fill] (A0) at (90:3) {} 
      child {node [label=135: {$\scriptstyle{3}$}]{}}
      child {node [label=45: {$\scriptstyle{8}$}]{}};
      \node at ([shift={(-90:1)}]A0) {$\scriptstyle{\hat{v}}$};
      \tikzstyle{level 1}=[counterclockwise from=210,level distance=12mm,sibling angle=60]
      \node[color= red, draw,circle, fill] (A1) at (180:3) {} 
      child [ color= red] {node [label=210: {$\scriptstyle{2}$}]{}}
      child [ color= red] {node [label=-90: {$\scriptstyle{1}$}]{}}      
      child [ color= red] {node [label=-30: {$\scriptstyle{r}$}]{}};
      \tikzstyle{level 1}=[counterclockwise from=180,level distance=12mm,sibling angle=60]
      \node[color= red, draw,circle, fill] (A2) at (0:3) {} 
      child [ color= red] {node [label=180: {$\scriptstyle{4}$}]{}}
      child [ color= red]{node [label=-120: {$\scriptstyle{5}$}]{}}      
      child [ color= red] {node [label=-60: {$\scriptstyle{6}$}]{}}
      child [ color= red] {node [label=0: {$\scriptstyle{7}$}]{}};      
      \node[draw, fill, very thin] (A0A1) at (135:3) {}; 
      \node[draw, fill, very thin] (A0A2) at (45:3) {};       
      \path (A0) edge [very thick,  bend left=-22.500000] node[auto,above=3, label={180:$\scriptstyle{\psi}$}]{}(A0A1);
      \path (A0A1) edge [ color= red, bend left=-22.500000] node[auto,near start,above=3, label={180:}]{}(A1);      
      \path (A0) edge [bend left=22.500000] node[auto,right=3, label={0:}]{}(A0A2);
      \path (A0A2) edge [color= red, very thick, bend left=22.500000] node[auto,right=3, label={0:$\scriptstyle{\psi}$}]{}(A2);      
         \end{tikzpicture}
}
,
\quad \mu_2 =
\mbox{
\begin{tikzpicture}[baseline={([yshift=-.7ex]current bounding box.center)}]
      \path(0,0) circle (1 and 1);
      \tikzstyle{level 1}=[clockwise from=135,level distance=12mm,sibling angle=90]
      \node[draw,circle, fill] (A0) at (135:3) {} 
      child {node [label=135: {$\scriptstyle{3}$}]{}};
      \tikzstyle{level 1}=[clockwise from=45,level distance=12mm,sibling angle=90]
      \node[draw,circle, fill] (A00) at (45:3) {} 
      child {node [label=45: {$\scriptstyle{8}$}]{}};
      \node at ([shift={(-135:1)}]A00) {$\scriptstyle{\hat{v}}$};
      \tikzstyle{level 1}=[counterclockwise from=210,level distance=12mm,sibling angle=60]
      \node[color= red,draw,circle, fill] (A1) at (225:3) {} 
      child [ color= red]{node [label=210: {$\scriptstyle{2}$}]{}}
      child [ color= red]{node [label=-90: {$\scriptstyle{1}$}]{}}      
      child [ color= red]{node [label=-30: {$\scriptstyle{r}$}]{}};
      \tikzstyle{level 1}=[counterclockwise from=180,level distance=12mm,sibling angle=60]
      \node[color= red,draw,circle, fill] (A2) at (-45:3) {} 
      child [ color= red]{node [label=180: {$\scriptstyle{4}$}]{}}
      child [ color= red]{node [label=-120: {$\scriptstyle{5}$}]{}}      
      child [ color= red] {node [label=-60: {$\scriptstyle{6}$}]{}}
      child [ color= red]{node [label=0: {$\scriptstyle{7}$}]{}};      
      \node[draw, fill, very thin] (A2A00) at ($(A2)!0.5!(A00)$) {}; 
      \node[draw, fill, very thin] (A0A00) at ($(A0)!0.5!(A00)$) {}; 
      \node[draw, fill, very thin] (A0A1) at ($(A0)!0.5!(A1)$) {};       
      \path (A0) edge [very thick, bend left=0.000000] node[auto,near start,above=1]{}(A0A1);
      \path (A0A1) edge [color= red,bend left=0.000000] node[auto,near start,above=1]{}(A1);      
      \path (A0) edge [bend left=0.000000] node[auto,near start,above=1]{}(A0A00);      
      \path (A0A00) edge [very thick, bend left=0.000000] node[auto,near start,above=1]{}(A00);            
      \path (A00) edge [bend left=0.000000] node[auto,near end,right=2]{}(A2A00);
      \path (A2A00) edge [color= red,very thick, bend left=0.000000] node[auto,right=2, label={0:$\scriptstyle{\psi}$}]{}(A2);
         \end{tikzpicture}
}
,
\quad \mu_3 =
\mbox{
\begin{tikzpicture}[baseline={([yshift=-.7ex]current bounding box.center)}]
      \path(0,0) circle (1 and 1);
      \tikzstyle{level 1}=[clockwise from=135,level distance=12mm,sibling angle=90]
      \node[draw,circle, fill] (A0) at (135:3) {} 
      child {node [label=135: {$\scriptstyle{8}$}]{}};
      \tikzstyle{level 1}=[clockwise from=45,level distance=12mm,sibling angle=90]
      \node[draw,circle, fill] (A00) at (45:3) {} 
      child {node [label=45: {$\scriptstyle{3}$}]{}};
      \node at ([shift={(-45:1)}]A0) {$\scriptstyle{\hat{v}}$};
      \tikzstyle{level 1}=[counterclockwise from=210,level distance=12mm,sibling angle=60]
      \node[color= red,draw,circle, fill] (A1) at (225:3) {} 
      child [ color= red]{node [label=210: {$\scriptstyle{2}$}]{}}
      child [ color= red]{node [label=-90: {$\scriptstyle{1}$}]{}}      
      child [ color= red]{node [label=-30: {$\scriptstyle{r}$}]{}};
      \tikzstyle{level 1}=[counterclockwise from=180,level distance=12mm,sibling angle=60]
      \node[color= red,draw,circle, fill] (A2) at (-45:3) {} 
      child [ color= red]{node [label=180: {$\scriptstyle{4}$}]{}}
      child [ color= red]{node [label=-120: {$\scriptstyle{5}$}]{}}      
      child [ color= red]{node [label=-60: {$\scriptstyle{6}$}]{}}
      child [ color= red] {node [label=0: {$\scriptstyle{7}$}]{}};      
      \node[draw, fill, very thin] (A2A00) at ($(A2)!0.5!(A00)$) {}; 
      \node[draw, fill, very thin] (A0A00) at ($(A0)!0.5!(A00)$) {}; 
      \node[draw, fill, very thin] (A0A1) at ($(A0)!0.5!(A1)$) {};       
      \path (A0) edge [very thick, bend left=0.000000] node[auto,near start,above=1]{}(A0A1);
      \path (A0A1) edge [color= red, bend left=0.000000] node[auto,near start,above=1]{}(A1);      
      \path (A0) edge [bend left=0.000000] node[auto,near start,above=1]{}(A0A00);      
      \path (A0A00) edge [very thick, bend left=0.000000] node[auto,near start,above=1]{}(A00);            
      \path (A00) edge [bend left=0.000000] node[auto,near end,right=2]{}(A2A00);
      \path (A2A00) edge [color= red, very thick, bend left=0.000000] node[auto,right=2, label={0:$\scriptstyle{\psi}$}]{}(A2);
         \end{tikzpicture}
}
\]
    \caption{An illustration of \textit{Step 2} in the proof of Lemma \ref{pull-back}. Three terms that share the same complement of $T_0$, colored red. In $\mu_2$, the vertex $\hat{v}$ is trivalent and external in $T_0$. As proven in Lemma \ref{euler}, the sum of the other two terms is $\mu_1+\mu_3 = 0$.}
    \label{fig:step2}
\end{figure}

\vskip4pt

\noindent{\bf Step 3.} We now turn our attention to each remaining term in the right-hand side of \eqref{pullb}, and realize it as arising in a unique way in the expansion of the left-hand side described in \textit{Step 1.} There are two separate cases to consider, illustrated in Figure \ref{fig:step3}.
\begin{figure}[tb]
\begin{eqnarray*}
\mbox{
\begin{tikzpicture}[baseline={([yshift=-.7ex]current bounding box.center)}]
      \path(0,0) circle (1 and 1);
      \tikzstyle{level 1}=[clockwise from=135,level distance=12mm,sibling angle=90]
      \node[draw,circle, fill] (A0) at (135:3) {} 
      child {node [label=135: {$\scriptstyle{3}$}]{}};
      \tikzstyle{level 1}=[clockwise from=45,level distance=12mm,sibling angle=90]
      \node[draw,circle, fill] (A00) at (45:3) {} 
      child {node [label=45: {$\scriptstyle{4}$}]{}};
      \tikzstyle{level 1}=[counterclockwise from=210,level distance=12mm,sibling angle=60]
      \node[draw,circle, fill] (A1) at (225:3) {} 
      child {node [label=210: {$\scriptstyle{2}$}]{}}
      child {node [label=-90: {$\scriptstyle{1}$}]{}}      
      child {node [label=-30: {$\scriptstyle{r}$}]{}};
      \tikzstyle{level 1}=[counterclockwise from=210,level distance=12mm,sibling angle=40]
      \node[draw,circle, fill] (A2) at (-45:3) {} 
      child {node [label=-150: {$\scriptstyle{5}$}]{}}      
      child {node [label=-110: {$\scriptstyle{6}$}]{}}
      child {node [label=-70: {$\scriptstyle{7}$}]{}}
      child {node [label=-30: {$\scriptstyle{8}$}]{}};  
      \node at ([shift={(150:1)}]A2) {$\scriptstyle{\hat{v}}$};          
      \node[draw, fill, very thin] (A2A00) at ($(A2)!0.5!(A00)$) {}; 
      \node[draw, fill, very thin] (A0A00) at ($(A0)!0.5!(A00)$) {}; 
      \node[draw, fill, very thin] (A0A1) at ($(A0)!0.5!(A1)$) {};       
      \path (A0) edge [very thick, bend left=0.000000] node[auto,near start,above=1]{}(A0A1);
      \path (A0A1) edge [bend left=0.000000] node[auto,near start,above=1]{}(A1);      
      \path (A0) edge [bend left=0.000000] node[auto,near start,above=1]{}(A0A00);      
      \path (A0A00) edge [very thick, bend left=0.000000] node[auto,near start,above=1]{}(A00);            
      \path (A00) edge [bend left=0.000000] node[auto,near end,right=2]{}(A2A00);
      \path (A2A00) edge [very thick, bend left=0.000000] node[auto,right=2, label={0:$\scriptstyle{\psi}$}]{}(A2);
         \end{tikzpicture}
}
&\quad \leftlongmapsto \quad&
\mbox{
\begin{tikzpicture}[baseline={([yshift=-.7ex]current bounding box.center)}]
      \path(0,0) circle (1 and 1);
      \tikzstyle{level 1}=[clockwise from=135,level distance=12mm,sibling angle=90]
      \node[draw,circle, fill] (A0) at (135:3) {} 
      child {node [label=135: {$\scriptstyle{3}$}]{}};
      \tikzstyle{level 1}=[clockwise from=45,level distance=12mm,sibling angle=90]
      \node[draw,circle, fill] (A00) at (45:3) {} 
      child {node [label=45: {$\scriptstyle{4}$}]{}};
      \tikzstyle{level 1}=[counterclockwise from=210,level distance=12mm,sibling angle=60]
      \node[draw,circle, fill] (A1) at (225:3) {} 
      child {node [label=210: {$\scriptstyle{2}$}]{}}
      child {node [label=-90: {$\scriptstyle{1}$}]{}}      
      child {node [label=-30: {$\scriptstyle{r}$}]{}};
      \tikzstyle{level 1}=[counterclockwise from=210,level distance=12mm,sibling angle=60]
      \node[draw,circle, fill] (A2) at (-45:3) {} 
      child {node [label=-150: {$\scriptstyle{5}$}]{}}      
      child {node [label=-90: {$\scriptstyle{6}$}]{}}
      child {node [label=-30: {$\scriptstyle{7}$}]{}};  
      \node at ([shift={(150:1)}]A2) {$\scriptstyle{\hat{v}}$};          
      \node[draw, fill, very thin] (A2A00) at ($(A2)!0.5!(A00)$) {}; 
      \node[draw, fill, very thin] (A0A00) at ($(A0)!0.5!(A00)$) {}; 
      \node[draw, fill, very thin] (A0A1) at ($(A0)!0.5!(A1)$) {};       
      \path (A0) edge [very thick, bend left=0.000000] node[auto,near start,above=1]{}(A0A1);
      \path (A0A1) edge [bend left=0.000000] node[auto,near start,above=1]{}(A1);      
      \path (A0) edge [bend left=0.000000] node[auto,near start,above=1]{}(A0A00);      
      \path (A0A00) edge [very thick, bend left=0.000000] node[auto,near start,above=1]{}(A00);            
      \path (A00) edge [bend left=0.000000] node[auto,near end,right=2]{}(A2A00);
      \path (A2A00) edge [very thick, bend left=0.000000] node[auto,right=2, label={0:$\scriptstyle{\psi}$}]{}(A2);
         \end{tikzpicture}
}
\\
\mbox{
\begin{tikzpicture}[baseline={([yshift=-.7ex]current bounding box.center)}]
      \path(0,0) circle (1 and 1);
      \tikzstyle{level 1}=[clockwise from=135,level distance=12mm,sibling angle=90]
      \node[draw,circle, fill] (A0) at (135:3) {} 
      child {node [label=135: {$\scriptstyle{3}$}]{}};
      \tikzstyle{level 1}=[clockwise from=45,level distance=12mm,sibling angle=90]
      \node[draw,circle, fill] (A00) at (45:3) {} 
      child {node [label=45: {$\scriptstyle{8}$}]{}};
      \tikzstyle{level 1}=[counterclockwise from=210,level distance=12mm,sibling angle=60]
      \node[draw,circle, fill] (A1) at (225:3) {} 
      child {node [label=210: {$\scriptstyle{2}$}]{}}
      child {node [label=-90: {$\scriptstyle{1}$}]{}}      
      child {node [label=-30: {$\scriptstyle{r}$}]{}};
      \tikzstyle{level 1}=[counterclockwise from=210,level distance=12mm,sibling angle=40]
      \node[draw,circle, fill] (A2) at (-45:3) {} 
      child {node [label=-150: {$\scriptstyle{4}$}]{}}      
      child {node [label=-110: {$\scriptstyle{5}$}]{}}
      child {node [label=-70: {$\scriptstyle{6}$}]{}}
      child {node [label=-30: {$\scriptstyle{7}$}]{}};  
      \node at ([shift={(225:1)}]A00) {$\scriptstyle{\hat{v}}$};       
      \node at ([shift={(150:1)}]A2) {$\scriptstyle{v^\circ}$};          
      \node[draw, fill, very thin] (A2A00) at ($(A2)!0.5!(A00)$) {}; 
      \node[draw, fill, very thin] (A0A00) at ($(A0)!0.5!(A00)$) {}; 
      \node[draw, fill, very thin] (A0A1) at ($(A0)!0.5!(A1)$) {};       
      \path (A0) edge [very thick, bend left=0.000000] node[auto,near start,above=1]{}(A0A1);
      \path (A0A1) edge [bend left=0.000000] node[auto,near start,above=1]{}(A1);      
      \path (A0) edge [bend left=0.000000] node[auto,near start,above=1]{}(A0A00);      
      \path (A0A00) edge [very thick, bend left=0.000000] node[auto,near start,above=1]{}(A00);            
      \path (A00) edge [bend left=0.000000] node[auto,near end,right=2]{}(A2A00);
      \path (A2A00) edge [very thick, bend left=0.000000] node[auto,right=2, label={0:$\scriptstyle{\psi}$}]{}(A2);
         \end{tikzpicture}
}
&\quad \leftlongmapsto \quad&
\mbox{
\begin{tikzpicture}[baseline={([yshift=-.7ex]current bounding box.center)}]
      \path(0,0) circle (1 and 1);
      \tikzstyle{level 1}=[clockwise from=90,level distance=12mm,sibling angle=90]
      \node[draw,circle, fill] (A0) at (90:3) {} 
      child {node [label=90: {$\scriptstyle{3}$}]{}};
      \tikzstyle{level 1}=[counterclockwise from=210,level distance=12mm,sibling angle=60]
      \node[draw,circle, fill] (A1) at (180:3) {} 
      child {node [label=210: {$\scriptstyle{2}$}]{}}
      child {node [label=-90: {$\scriptstyle{1}$}]{}}      
      child {node [label=-30: {$\scriptstyle{r}$}]{}};
      \tikzstyle{level 1}=[counterclockwise from=210,level distance=12mm,sibling angle=40]
      \node[draw,circle, fill] (A2) at (0:3) {} 
      child {node [label=-150: {$\scriptstyle{4}$}]{}}
      child {node [label=-110: {$\scriptstyle{5}$}]{}}      
      child {node [label=-70: {$\scriptstyle{6}$}]{}}
      child {node [label=-30: {$\scriptstyle{7}$}]{}};      
      \node at ([shift={(150:1)}]A2) {$\scriptstyle{v^\circ}$};       
      \node[draw, fill, very thin] (A0A1) at (135:3) {}; 
      \node[draw, fill, very thin] (A0A2) at (45:3) {};       
      \path (A0) edge [very thick, bend left=-22.500000] node[auto,above=3, label={180:}]{}(A0A1);
      \path (A0A1) edge [bend left=-22.500000] node[auto,near start,above=3, label={180:}]{}(A1);      
      \path (A0) edge [bend left=22.500000] node[auto,right=3, label={0:}]{}(A0A2);
      \path (A0A2) edge [very thick, bend left=22.500000] node[auto,right=3, label={0:$\scriptstyle{\psi^2}$}]{}(A2);      
         \end{tikzpicture}
}
\end{eqnarray*}
    \caption{The two cases in \textit{Step 3} of Lemma \ref{pull-back}. The terms on the left arise from the pull-back of the terms on the right.}
    \label{fig:step3}
\end{figure}
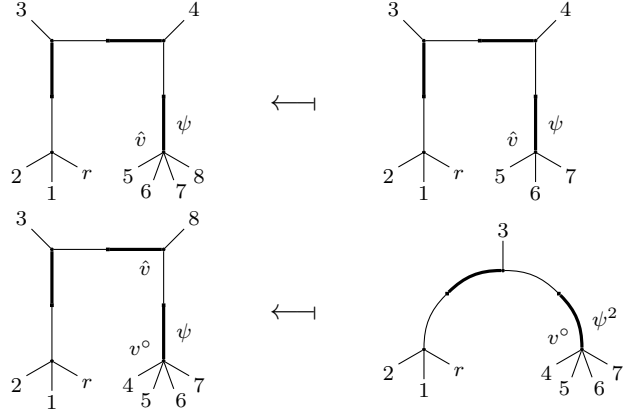

For a term $\mu$ corresponding to a pair $\left(T, (-1)^{|V(T)|}\prod_{v\in V(T)} \psi_{h(v)}^{k_v}\right)$ such that $T_0 = \varnothing$, it must be that $\textrm{val}(\hat{v})\geq 4$ and $k_{\hat{v}}< \textrm{val}({\hat{v}})-3$. Then the term $\mu$ appears as the first term in the  right-hand side of \eqref{psikock} in the pull-back via $\pi_n$ of the term corresponding to $\left(\pi_{n}^G(T), (-1)^{|V(\pi_{n}^G(T))|}\prod_{v\in V(\pi_{n}^G(T))} \psi_{h(v)}^{k_v}\right)$.

Fix a term $\mu$ corresponding to $\left(T, (-1)^{|V(T)|}\prod_{v\in V(T)} \psi_{h(v)}^{k_v}\right)$ where $T_0 \not= \varnothing$. After \textit{Step 2}, $\hat{v}$ is trivalent and external in $T_0$. Denote by $v^{\circ}$ the vertex adjacent to $\hat{v}$ via the edge not containg $h(\hat{v})$ (in other words, the next vertex after $\hat{v}$ moving away from the root).
Since  $v^{\circ}\not\in T_0$, it must be that $k_{v^{\circ}} < \textrm{val}(v^{\circ})-3$. Then the term $\mu$ appears as the second term in the  right-hand side of \eqref{psikock} in the pull-back of the term corresponding to $\left(\pi_{n}^G(T), (-1)^{|V(\pi_{n}^G(T))|} \psi_{h(v^{\circ})}^{k_{v^{\circ}}+1}\prod_{v\not= v^{\circ}} \psi_{h(v)}^{k_v}\right)$.

The statement thus follows by induction on $n$.
\end{proof}

We conclude this section with one more combinatorial identity which will be used in the proof of Theorem \ref{mainthmctintro}. 
Let $B_{0,n+1}\subset G_{0,n+1}$ be the set of trees defined recursively as 
\begin{align*}
B_{0,4} &= \left(\pi^G_{3}\right)^{-1} \left( G_{0,3}\right),\\
B_{0,n+1} &= \left(\pi^G_{n}\right)^{-1} \left(B_{0,n} \cup \bigcup_{i=1}^{n-2} \left(\textrm{Im}\left(\sigma^G_i\right) \cdot B_{0,n} \right)\right), \quad \mbox{for } n\geq 4.
\end{align*}
Here, $\textrm{Im}\left(\sigma^G_i\right) \cdot B_{0,n}$ is the set of stable graphs obtained by letting the $(n-1)$-st leaf collide  with a leaf different than $r$ of a graph in $B_{0,n}$. Note that the $(n-1)$-st leaf of a graph in $B_{0,n}$ is never attached to a trivalent vertex. 

Informally, $B_{0,n+1}$ contains graphs that are obtained from graphs in $B_{0,n}$ by either directly adding the $n$-th leaf to a vertex,  or by first ``pulling out" the $(n-1)$-st leaf and one other non-root leaf adjacent to the same vertex to a trivalent external vertex, and then adding the $n$-th leaf to any vertex of this new graph. 

\begin{prop}
\label{eulergen}
For $n\geq 3$, one has
\begin{align}
\label{eulergeneq}
\sum_{T \in G_{0,n+1}^{ne}}
{\xi_{T}}_* 
\left(
\prod_{v\in V(T)} \frac{1}{\psi_{h(v)}-1}
\right) =
\sum_{T \in B_{0,n+1}}
{\xi_{T}}_* \left((-1)^{|V(T)|}\right).
\end{align}
\end{prop}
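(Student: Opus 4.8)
The plan is to prove \eqref{eulergeneq} by induction on $n$, the main input being Lemma \ref{pull-back}. Set
\[
S_n := \sum_{T \in G_{0,n}} {\xi_T}_*\!\left(\prod_{v\in V(T)}\frac{1}{\psi_{h(v)}-1}\right)\in A^*(\overline{\mathcal{M}}_{0,n}),
\]
where, in contrast with \eqref{eulergeneq}, the sum runs over \emph{all} of $G_{0,n}$; with this notation Lemma \ref{pull-back} reads $\pi_n^*(S_n)=\sum_{T\in G_{0,n+1}^{ne}}{\xi_T}_*\!\left(\prod_v \frac{1}{\psi_{h(v)}-1}\right)$, which is the left-hand side of \eqref{eulergeneq}. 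I will establish the sharper claim
\[
(\star)\qquad S_n \;=\; \sum_{T\in \widetilde{B}_{0,n}} {\xi_T}_*\bigl((-1)^{|V(T)|}\bigr), \qquad \widetilde{B}_{0,n}:= B_{0,n}\cup\bigcup_{i=1}^{n-2}\bigl(\textrm{Im}(\sigma_i^G)\cdot B_{0,n}\bigr),
\]
i.e.\ $S_n$ is the alternating sum of the \emph{undecorated} strata attached to precisely those trees appearing inside the operator $(\pi_n^G)^{-1}$ in the recursive definition of $B_{0,n+1}$ (for $n=3$ one reads $\widetilde{B}_{0,3}=G_{0,3}$).

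Granting $(\star)$, Proposition \ref{eulergen} follows at once. Since $\pi_n\colon\overline{\mathcal{M}}_{0,n+1}\to\overline{\mathcal{M}}_{0,n}$ is flat of relative dimension one, pulling back a codimension-$c$ boundary class produces only codimension-$c$ classes: concretely, $\pi_n^*\bigl({\xi_T}_*(1)\bigr)=\sum_{v\in V(T)}{\xi_{T_v}}_*(1)$, where $T_v$ denotes $T$ with the leaf $n$ attached at the vertex $v$. In particular $|V(T_v)|=|V(T)|$, so no sign is introduced and no new vertex is created --- which is exactly why the lift $(\pi_n^G)^{-1}$ relevant here attaches the new leaf at vertices only, consistently with the fact that the highest-indexed non-root leaf of a graph in $B_{0,n}$ never lies on a trivalent vertex. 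Applying $\pi_n^*$ to $(\star)$ therefore gives $\pi_n^*(S_n)=\sum_{T\in B_{0,n+1}}{\xi_T}_*\bigl((-1)^{|V(T)|}\bigr)$, and the left-hand side of \eqref{eulergeneq} equals $\pi_n^*(S_n)$ by Lemma \ref{pull-back}.

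To prove $(\star)$ by induction, I would decompose $G_{0,n}=G_{0,n}^{ne}\sqcup\bigsqcup_{i=1}^{n-2}\sigma_i^G(G_{0,n-1})$. For $T=\sigma_i^G(T')$ the extra trivalent vertex $w$ of the new rational tail contributes the factor $\frac{1}{\psi_{h(w)}-1}=-1$ (the class $\psi_{h(w)}$ lives on $\overline{\mathcal{M}}_{0,3}$, hence vanishes), the function $h$ is unchanged on $V(T')$, and ${\xi_T}=\sigma_i\circ{\xi_{T'}}$ with $\sigma_i\colon\overline{\mathcal{M}}_{0,n-1}\to\overline{\mathcal{M}}_{0,n}$ the $i$-th section. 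Summing, and using Lemma \ref{pull-back} on the $G_{0,n}^{ne}$-part, one gets
\[
S_n \;=\; \pi_{n-1}^*(S_{n-1})\;-\;\sum_{i=1}^{n-2}{\sigma_i}_*(S_{n-1}).
\]
Now insert the inductive hypothesis $S_{n-1}=\sum_{T'\in\widetilde{B}_{0,n-1}}{\xi_{T'}}_*\bigl((-1)^{|V(T')|}\bigr)$. Pulling back by $\pi_{n-1}$ produces $\sum_{T'\in\widetilde{B}_{0,n-1}}\sum_{v}{\xi_{T'_v}}_*\bigl((-1)^{|V|}\bigr)=\sum_{T\in B_{0,n}}{\xi_T}_*\bigl((-1)^{|V(T)|}\bigr)$, since $B_{0,n}=(\pi_{n-1}^G)^{-1}(\widetilde{B}_{0,n-1})$. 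Pushing forward by the section $\sigma_i$ adds one vertex, so ${\sigma_i}_*{\xi_{T'}}_*\bigl((-1)^{|V(T')|}\bigr)=-{\xi_{\sigma_i^G(T')}}_*\bigl((-1)^{|V(\sigma_i^G(T'))|}\bigr)$, whence $-\sum_{i}{\sigma_i}_*(S_{n-1})=\sum_{1\leq i\leq n-2,\; T'\in\widetilde{B}_{0,n-1}}{\xi_{\sigma_i^G(T')}}_*\bigl((-1)^{|V|}\bigr)$. It then remains to check that the trees $\sigma_i^G(T')$ occurring here are precisely the trees in $\bigcup_{i}\bigl(\textrm{Im}(\sigma_i^G)\cdot B_{0,n}\bigr)$, each arising exactly once: a graph of $B_{0,n}$ is a tree $T'\in\widetilde{B}_{0,n-1}$ with leaf $n-1$ placed at some vertex $w$, and colliding leaf $n-1$ with another leaf $a$ at $w$ yields $\sigma_a^G(T')$, while conversely $\sigma_i^G(T')$ comes from the graph of $B_{0,n}$ obtained by placing leaf $n-1$ at the vertex of $T'$ carrying leaf $i$. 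Adding the two contributions gives $(\star)$. The base case $n=3$ is direct, since $S_3={\xi}_*\bigl(\frac{1}{\psi-1}\bigr)=-[\overline{\mathcal{M}}_{0,3}]$ (as $\psi\equiv 0$ there), matching $\widetilde{B}_{0,3}=G_{0,3}$ with coefficient $(-1)^1$.

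The hard part will be the bookkeeping in the inductive step: confirming that each section $\sigma_i$ supplies precisely the sign needed to keep every coefficient of the form $(-1)^{|V(T)|}$, and that ``pulling leaf $n-1$ out of a graph in $\widetilde{B}_{0,n-1}$'' and ``colliding leaf $n-1$ inside $B_{0,n}$'' enumerate the same trees with the same (trivial) multiplicities --- this is the step that matches the recursive shape of the definition of $B_{0,n+1}$ to the geometry. A minor point to keep straight is that both the lift $(\pi_n^G)^{-1}$ in the definition of $B_{0,n+1}$ and the pullback $\pi_n^*$ of boundary strata attach the new leaf at vertices only, never sprouting it on an edge; this is forced by codimension and is compatible with the trivalence property recorded for the leaves of $B_{0,n}$.
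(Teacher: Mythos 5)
Your proposal is correct and follows essentially the same route as the paper: the same induction driven by Lemma \ref{pull-back}, the decomposition $G_{0,n}=G_{0,n}^{ne}\cup\bigcup_i\sigma_i^G(G_{0,n-1})$, and the matching of this with the recursive definition of $B_{0,n+1}$, with your intermediate identity $(\star)$ being exactly the identity the paper manipulates (the paper writes your $-\sum_i{\sigma_i}_*(S_{n-1})$ as $-\sum_i\mathrm{Im}(\sigma_i)\cdot\pi_{n-1}^*(S_{n-1})$, which agrees by the projection formula). Packaging the induction around $(\star)$ on $\overline{\mathcal{M}}_{0,n}$ rather than on the displayed equation itself is only a cosmetic reorganization.
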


\begin{proof}
We prove the statement by recursion. 
The case $n=3$ follows from Lemma \ref{euler}.
For the recursion, we consider the pull-back of (\ref{eulergeneq}) via $\pi_n$.
By the definition of $B_{0,n+1}$, we have
\begin{eqnarray*}
\sum_{T \in B_{0,n+1}}
{\xi_{T}}_*  \left((-1)^{|V(T)|}\right) &=&
\pi_n^*\left( 
\sum_{T \in B_{0,n}}
{\xi_{T}}_*  \left((-1)^{|V(T)|}\right)
\right)\\
&&{}-
\pi_n^*\left( 
\sum_{i=1}^{n-2} \textrm{Im}(\sigma_{i }) \cdot\left(
\sum_{T \in B_{0,n}}
{\xi_{T}}_*  \left((-1)^{|V(T)|}\right)
\right)\right).
\end{eqnarray*}
On the other hand, we have
\begin{align*}
\sum_{T \in G_{0,n+1}^{ne}}
{\xi_{T}}_* 
\left(
\prod_{h\in V(T)} \frac{1}{\psi_{h(v)}-1}
\right) ={}&
\pi_n^*\left( 
\sum_{T \in G_{0,n}}
{\xi_{T}}_* 
\left(
\prod_{h\in V(T)} \frac{1}{\psi_{h(v)}-1}
\right)
\right)\\
={}& \pi_n^*\left( 
\sum_{T \in G_{0,n}^{ne}}
{\xi_{T}}_* 
\left(
\prod_{h\in V(T)} \frac{1}{\psi_{h(v)}-1}
\right)
\right)\\
& {}- \pi_n^*\left( 
\sum_{i=1}^{n-2} \textrm{Im}( \sigma_{i }) \cdot \left(
\sum_{T \in G_{0,n}^{ne}}
{\xi_{T}}_* 
\left(
\prod_{h\in V(T)} \frac{1}{\psi_{h(v)}-1}
\right)
\right)
\right).
\end{align*}
We have used Lemma \ref{pull-back} for the first equality. The second equality follows from the identity
\[
G_{0,n} = G_{0,n}^{ne} \cup \bigcup_{i=1}^{n-2} \left(\textrm{Im}(\sigma^G_i)  \cdot G_{0,n}^{ne} \right),
\]
where $\textrm{Im}(\sigma^G_i)  \cdot G_{0,n}^{ne}=\sigma^G_i  \left(G_{0,n-1}\right)$.
By recursion, we have
\begin{align*}
\sum_{T \in G_{0,n}^{ne}}
{\xi_{T}}_* 
\left(
\prod_{h\in V(T)} \frac{1}{\psi_{h(v)}-1}
\right) =
\sum_{T \in B_{0,n}}
{\xi_{T}}_*  \left((-1)^{|V(T)|}\right).
\end{align*}
The statement thus follows. 
\end{proof}

\section{The formula in compact type}
\label{sec:ct}

In this section, we prove Theorem \ref{mainthmctintro}, that is,
the class of the closure $\mathcal{H}yp^{ct}_{2,n}$ of $\mathcal{H}yp_{2,n}$ in ${\mathcal{M}}_{2,n}^{ct}$ is 
the degree $n$ component of the following class:
\begin{multline}
\label{mainct}
\sum_{\Gamma\in {G}_{2,n}^{ct}}  \frac{1}{|{\rm Aut}(\Gamma)|} {\xi_{\Gamma}}_* 
\left(\sum_{j=0}^n j! 
\left[
\prod_{i=1}^n \left(1+3\omega_i\right) 
\prod_{v\in V(\Gamma)} e^{-t\lambda} 
\right.\right.\\
\left.\left.
\prod_{e=(h,h')\in E_{1}(\Gamma)} \frac{1-e^{t(\psi_{h}+\psi_{h'})}}{\psi_{h}+\psi_{h'}} 
\prod_{e=(h,h')\in E_2(\Gamma)} \frac{1}{\psi_h-(1+3\omega_{h'})} 
\right]_{t^{j}}
\right),
\end{multline}
where:
\begin{itemize}
	\item $E_1(\Gamma)$ denotes the set of edges of $\Gamma$ that separate $\Gamma$ in two connected subgraphs of genus one; 
	\item $E_2(\Gamma)$ denotes the set of edges of $\Gamma$ that separate $\Gamma$ in a subgraph of genus two and a rational subgraph;
	\item $(h,h')$ denotes the pair of half-edges that form a given edge $e$. When $e\in E_2(\Gamma)$, we define $h$ to be the half-edge that points {\it towards} the subgraph of genus two. \footnote{The edge $e$ is incident to a rational vertex $v$ corresponding to a rational tail: if we think of such rational vertex as part of a rooted tree, then $h$ is the half-edge $h(v)$ from \S \ref{sec:comb}.}
\end{itemize}

A few remarks before the proof. 

\begin{rem} \label{rem:geom}
Inside the push-forward ${\xi_{\Gamma}}_*$ in the above formula, the factor 
\[
 \frac{1}{\psi_h-(1+3\omega_{h'})} 
\]
relative to $(h,h')\in E_2(\Gamma)$ is interpreted formally as 
\begin{equation} \label{expand}
 \frac{1}{\psi_h-(1+3\omega_{h'})} = -\frac{1}{1+3\omega_{h'}} \cdot \frac{1}{1-\dfrac{\psi_h}{1+3\omega_{h'}}} 
=  -\frac{1}{1+3\omega_{h'}} \cdot \sum_{k=0}^\infty \left(\frac{\psi_h}{1+3\omega_{h'}} \right)^k.
\end{equation}
This expression should still raise eyebrows because of the class $\omega_{h'}$ at the denominator. But consider a rational tail $T$ of $\Gamma$ as a rooted tree attached to the core $C(\Gamma)$ via an edge $e =(r,r')$, with $r$ incident to $T$. From \S \ref{subsec:dec}, for all edges (resp.~legs) of $T$, $\omega_{h'}$ (resp.~$\omega_i$) coincides with  $\omega_{r'}$. Hence the expression 
\[
\prod_{i=1}^n \left(1+3\omega_i\right) \prod_{(h,h')\in E_2(T)} \frac{1}{\psi_h-(1+3\omega_{h'})} 
\]
as expanded in \eqref{expand} is  a polynomial in $\omega_{r'}$ and the classes $\psi_h$.
\end{rem}

\begin{rem}
Formula \eqref{mainct} is an extension of the formula in Proposition \ref{prod}. For a graph $\Gamma\in G_{2,n}^{ct}$ with no rational tails, it follows from the recursive description of the class  $\left[\mathcal{H}yp_{2,n}^{ct}\right]$ in \eqref{receq} that the contribution of $\Gamma$ to (\ref{mainct}) is equal to the contribution of $\Gamma$ to the product $\rho_1^*(\left[\overline{\mathcal{H}yp}_{2,1} \right])  \dotsm \rho_n^*(\left[\overline{\mathcal{H}yp}_{2,1} \right])$, which is computed in Proposition \ref{prod}. 
\end{rem}

\begin{rem}
\label{rewB}
An alternative formula for $\mathcal{H}yp_{2,n}^{ct}$ indexed by graphs of rational tails is the degree $n$ part of
\[
\sum_{\Gamma\in {G}^{rt}_{2,n}}  \frac{1}{|{\rm Aut}(\Gamma)|} 
\sum_{j=0}^{n-|E_2(\Gamma)|} 
{\xi_{\Gamma}}_*
\left(
\prod_{i=1}^n \left(1+3\omega_i\right) 
\prod_{(h,h')\in E_2(\Gamma)} \frac{1}{\psi_h-(1+3\omega_{h'})} 
\right)
\cdot (-\lambda - \delta_1)^{j}.
\]
This follows from formula (\ref{pixton}) applied to compute the powers $(-\lambda - \delta_1)^{j}$. 
Here the class $\delta_1$ denotes the pull-back of the class $\delta_1$ via the forgetful map $\overline{\mathcal{M}}_{2,n}\rightarrow \overline{\mathcal{M}}_{2}$.
\end{rem}

\begin{proof}[Proof of Theorem \ref{mainthmctintro}]
When $n=1$, the formula recovers the class 
\[
\left[\mathcal{H}yp^{ct}_{2,1} \right]=  3\omega-\lambda-\delta_1
\]
 computed in \cite[Theorem 2.2]{MR910206} --- in fact, the right-hand side represents also the  class $\left[\overline{{\mathcal{H}yp}}_{2,1} \right]\in A^1(\overline{{\mathcal{M}}}_{2,1})$. Suppose then the statement is true for $n-1$. In order to compute the class of $\mathcal{H}yp^{ct}_{2,n}$, we restrict to compact type the recursion in  Proposition \ref{rec}, to obtain:
\begin{equation}
\label{receq}
\left[\mathcal{H}yp^{ct}_{2,n} \right]
=
\pi_n^*\left[\mathcal{H}yp^{ct}_{2,n-1}\right]  \cdot  \rho_n^*\left[\mathcal{H}yp^{ct}_{2,1}\right] 
- \sum_{i=1}^{n-1} {\sigma_{i}}_\ast\left[ \mathcal{H}yp^{ct}_{2,n-1} \right].
\end{equation}

We now analyze three separate cases.
In {\it Step 1} we check the formula for graphs where the $n$-th leg is not attached to a rational tail. In {\it Step 2} we prove the formula holds for graphs where the $n$-th leg is attached to an external, trivalent, rational vertex. In {\it Step 3} we check the remaining cases, i.e.~when the $n$-th leg is attached to an external, rational subtree in   
$G_{0,m+1}^{ne}$, for some $m\leq n$ (recall the definition of $G_{0,m+1}^{ne}$ from \S \ref{sec:comb}).

\vskip4pt

\noindent \textbf{Step \refstepcounter{case}\arabic{case}.} 
Define the partial order $\leq_1$ on $G_{2,n}^{ct}$ as follows: $\Gamma'\leq_1 \Gamma$ if $\Gamma$ is the graph obtained from $\Gamma'$ by contracting some edges in $E_1(\Gamma')$. Observe that graphs of rational tails type are maximal in this partial order, and that for every $\Gamma' \in G_{2,n}^{ct}$, there is a unique $\Gamma \in G_{2,n}^{rt}$ with $\Gamma'\leq_1 \Gamma$, obtained by contracting all edges in $E_1(\Gamma')$. 

Fix $\Gamma\in G_{2,n}^{rt}$ such that the leg $n$ is not attached to  a rational tail.
Therefore, the leg $n$ is not incident to a vertex part of a rational tail  for all $\Gamma'\in G_{2,n}^{ct}$ with $\Gamma'\leq_1 \Gamma$.
The contribution to the right-hand side of (\ref{receq})  of any $\Gamma'\in G_{2,n}^{ct}$ with $\Gamma'\leq_1 \Gamma$ coincides with the contribution of $\Gamma'$ to 
\[
\pi_n^*(\left[\mathcal{H}yp^{ct}_{2,n-1} \right])\cdot \rho_n^*(\left[\mathcal{H}yp^{ct}_{2,1} \right]).
\]
By Remark \ref{rewB}, such contribution equals the contribution of $\Gamma'$ to
\[
\frac{1}{|{\rm Aut}(\Gamma)|} 
\sum_{j=0}^{n-1}
{\xi_{\Gamma}}_*
\left(
\prod_{i=1}^{n-1} \left(1+3\omega_i\right) 
\prod_{(h,h')\in E_2(\Gamma)} \frac{1}{\psi_h-(1+3\omega_{h'})} 
\right)
\cdot (-\lambda - \delta_1)^{j} \cdot (3\omega_n -\lambda - \delta_1).
\]
We are using that $\pi_n^*({\xi_{\Gamma}}_*(\psi_h))={\xi_{\Gamma}}_*(\psi_h)$ for all $(h,h')\in E_2(\Gamma)$, from the assumption on the leg $n$.
The degree $n$ part of the above formula  coincides with the degree $n$ part of the following 
\[
\frac{1}{|{\rm Aut}(\Gamma)|} 
\sum_{j=0}^{n}
{\xi_{\Gamma}}_*
\left(
\prod_{i=1}^{n} \left(1+3\omega_i\right) 
\prod_{(h,h')\in E_2(\Gamma)} \frac{1}{\psi_h-(1+3\omega_{h'})} 
\right)
\cdot (-\lambda - \delta_1)^{j}.
\]
Hence, again by Remark \ref{rewB}, the contribution of $\Gamma'$ is verified, for all $\Gamma'\leq_1 \Gamma$.

\vskip4pt

\noindent \textbf{Step \refstepcounter{case}\arabic{case}\label{Delta02}.} Fix $\Gamma\in \widetilde{G}_{2,n}$ such that the legs $n$ and $j$ are the only two  legs incident to an external vertex $v_0$ of a rational tail, for some $j\in\{1,\dots,n-1\}$. 
That is, ${\rm Im}(\xi_\Gamma) \subseteq \Delta_{0,\{j,n\}}$, for some $j\in\{1,\dots,n-1\}$.
Let $e_0=(h_0,h'_0)$ be the unique edge incident to $v_0$. The contribution of $\Gamma$ to the right-hand side of (\ref{receq}) is equal to the contribution of $\Gamma$ to
\[
- \sigma_{j *}\left( \mathcal{H}yp^{ct}_{2,n-1} \right)
\]
and by the recursive assumption this equals the degree $n$ part of
\begin{multline}
\label{Gamma02}
-\frac{1}{|{\rm Aut}(\Gamma)|} {\xi_{\Gamma}}_*
\left(\sum_{j=0}^n j! 
\left[
\prod_{i=1}^{n-1} \left(1+3\omega_i\right) 
\prod_{v\in V(\Gamma)} e^{-t\lambda} 
\right.\right.\\
\left.\left.
\prod_{e=(h,h')\in E_{1}(\Gamma)} \frac{1-e^{t(\psi_{h}+\psi_{h'})}}{\psi_{h}+\psi_{h'}} 
\prod_{e\not=e_0\in E_2(\Gamma)} \frac{1}{\psi_h-(1+3\omega_{h'})} 
\right]_{t^{j}}
\right).
\end{multline}
Since $\psi_{h_0}=0$, we have
\[
(1+3\omega_n) \cdot \frac{1}{\psi_{h_0}-(1+3\omega_{h'_0})} =-1.
\]
Hence, (\ref{Gamma02}) and the contribution of $\Gamma$ to (\ref{mainct}) coincide.

\vskip4pt

\noindent \textbf{Step \refstepcounter{case}\arabic{case}\label{cancel}.}  
We analyze the contributions of all graphs with the leg $n$ incident to a rational tail $T$. 
After \textit{Step \ref{Delta02}}, we may assume that $T\in G_{0,m+1}^{ne}$, for some $m \leq n$. The contribution of any such graph to the right-hand side of (\ref{receq}) is equal to its contribution to
\[
\pi_n^*(\left[\mathcal{H}yp^{ct}_{2,n-1} \right])\cdot \rho_n^*(\left[\mathcal{H}yp^{ct}_{2,1} \right]).
\]
We therefore fix $\Gamma$  a graph of rational tail type, and use the expression from Remark \ref{rewB} to prove that the contributions are correct for all $\Gamma' \leq_1 \Gamma$.

If $T$ denotes  the maximal rational subtree of $\Gamma$ to which the leg $n$ is attached, denote  by $G$ the graph obtained by removing $T$ from $\Gamma$.  We do this by  cutting the edge connecting $T$ to $G$ into two half-edges $r$ and $r'$, with $r$ incident to $T$, and $r'$ incident to a vertex of genus $2$ in $G$.  Denote by $\mathfrak{G}$ the set of all graphs $\Gamma = T \bigcup_{e = (r,r')} G$, where $G$ is fixed and $T$ varies among all graphs in $G_{0,m+1}^{ne}$. From Proposition~\ref{eulergen} and Remark \ref{rem:geom}, we have
\begin{align}
\label{symplif}
&\sum_{T\in G_{0,m+1}^{ne}}  \frac{1}{|{\rm Aut}(\Gamma)|} 
{\xi_{\Gamma}}_*
\left(
\prod_{i=1}^n \left(1+3\omega_i\right) 
\prod_{(h,h')\in E_2(\Gamma)} \frac{1}{\psi_h-(1+3\omega_{h'})} 
\right) 
\nonumber \\
&=
\sum_{T\in G_{0,m+1}^{ne}}  \frac{1}{|{\rm Aut}(\Gamma)|} 
{\xi_{\Gamma}}_*
\left(
\prod_{i=1}^n \left(1+3\omega_i\right) 
\prod_{v\in V(T)} \frac{1}{\psi_{h(v)}-(1+3\omega_{r'})}  
\prod_{(h,h')\in E_2(\Gamma)} \frac{1}{\psi_h-(1+3\omega_{h'})} 
\right) 
\nonumber \\
&=
\sum_{T\in B_{0,m+1}}  \frac{(-1)^{|V(T)|}}{|{\rm Aut}(\Gamma)|} 
{\xi_{\Gamma}}_*
\left(
\prod_{i=1}^n \left(1+3\omega_i\right)  
\prod_{v\in V(T)} \frac{1}{1+3\omega_{r'}}  
\prod_{(h,h')\in E_2(G)} \frac{1}{\psi_h-(1+3\omega_{h'})} 
\right).
\end{align}
 From (\ref{symplif}) and Remark \ref{rewB}, 
the contribution of a graph $\Gamma$  in $\mathfrak{G}$ to (\ref{mainct}) is 
\begin{align}
\label{cont1}
\frac{(-1)^{|V(T)|}}{|{\rm Aut}(\Gamma)|} 
\sum_{j=0}^{n}
{\xi_{\Gamma}}_*
\left(
\prod_{i=1}^{n} \left(1+3\omega_i\right) 
\prod_{v\in V(T)} \frac{1}{1+3\omega_{r'}}
\prod_{(h,h')\in E_2(G)} \frac{1}{\psi_h-(1+3\omega_{h'})} 
\right)
\cdot (-\lambda - \delta_1)^{j}.
\end{align}
On the other hand, using (\ref{symplif}) and Remark \ref{rewB} for the class of $\mathcal{H}yp^{ct}_{2,n-1}$, the contribution of a graph $\Gamma$ in $\mathfrak{G}$ to $
\pi_n^*(\left[\mathcal{H}yp^{ct}_{2,n-1} \right])\cdot \rho_n^*(\left[\mathcal{H}yp^{ct}_{2,1} \right])$ 
 equals 
\begin{multline}
\label{cont2}
\frac{(-1)^{|V(T)|}}{|{\rm Aut}(\Gamma)|} 
\sum_{j=0}^{n-1}
{\xi_{\Gamma}}_*
\left(
\prod_{i=1}^{n-1} \left(1+3\omega_i\right) 
\prod_{v\in V(T)} \frac{1}{1+3\omega_{r'}}
\prod_{(h,h')\in E_2(G)} \frac{1}{\psi_h-(1+3\omega_{h'})} 
\right) \\
\cdot (-\lambda - \delta_1)^{j} \cdot (3\omega_n -\lambda - \delta_1).
\end{multline}
Note that $\pi_n^*(\psi_h)=\psi_h$ for all $(h,h')\in E_2(G)$, since the leg $n$ is incident to $T$. The  degree $n$ part of (\ref{cont1}) and (\ref{cont2}) coincide.  This concludes the proof of Theorem \ref{mainthmctintro}.
\end{proof}

\section{The closure beyond compact type}
\label{sec:NCTn}

In this section we complete the computation of the classes of the loci $\overline{\mathcal{H}yp}_{2,n}$ by describing the contributions of decorated boundary strata classes of non-compact type. As in the study of the contributions of decorated boundary strata classes of compact type in \S \ref{sec:ct}, we use the recursive description from Proposition \ref{rec}.

\subsection{The classes \texorpdfstring{$\Phi_i$}{Phi} and \texorpdfstring{$\Gamma_{i,j}$}{Gamma}}
Recall the classes $\Phi_i$ and $\Gamma_{i,j}$ defined in \S \ref{sec:rec}.
Define
\begin{eqnarray}
\Phi\Gamma_n & := & \sum_{i} \Phi_i 
+
\sum_{i<j} \Gamma_{i,j}.
\end{eqnarray}
Our first aim is to find an explicit expression for the class $\Phi\Gamma_n$.

Let ${\mathcal{H}}_{1,\pm\cup I}$ be the locus of elliptic curves with marked points $p_+, p_-$ and $p_i$ for $i\in I$ such that $2p_i\sim p_+ + p_-$, for all $i\in I$. For $I\subset [n]$, let
\[
\varphi \colon \overline{\mathcal{M}}_{1,\pm\cup I} \times \overline{\mathcal{M}}_{0,\pm\cup I^c} \rightarrow \overline{\mathcal{M}}_{2,[n]}
\]
be the glueing map identifying the points $p_+$ together, and the points $p_-$ together.
Given $A\in A^*(\overline{\mathcal{M}}_{1,\pm\cup I})$ and $B\in A^*(\overline{\mathcal{M}}_{0,\pm\cup I^c})$,
let 
\[
A \boxtimes B:= \pi_1^*(A) \cdot \pi_2^*(B) \in A^*(\overline{\mathcal{M}}_{1,\pm\cup I} \times \overline{\mathcal{M}}_{0,\pm\cup I^c}),
\]
where $\pi_1\colon \overline{\mathcal{M}}_{1,\pm\cup I} \times \overline{\mathcal{M}}_{0,\pm\cup I^c} \rightarrow \overline{\mathcal{M}}_{1,\pm\cup I}$ and $\pi_2 \colon \overline{\mathcal{M}}_{1,\pm\cup I} \times \overline{\mathcal{M}}_{0,\pm\cup I^c} \rightarrow \overline{\mathcal{M}}_{0,\pm\cup I^c}$ are the two projections.
By definition, the first description of the classes $\Phi_i$ and $\Gamma_{i,j}$ is the following one. 

\begin{lem}
\label{lem:phigamma}
One has
\begin{eqnarray*}
\Phi_i & = & \frac{1}{2} \varphi_* \left(\left[\overline{\mathcal{H}}_{1,\pm\cup \{i,n\}^c}\right] \boxtimes
\mbox{
\begin{tikzpicture}[baseline={([yshift=-.7ex]current bounding box.center)}]
      \path(0,0) ellipse (1 and 1);     
      \tikzstyle{level 1}=[clockwise from=45,level distance=8mm,sibling angle=90]
      \node [draw,circle,fill] (A0) at (0:0) {}
      child {node [label=45: {$\scriptstyle{i}$}]{}}
      child {node [label=-45: {$\scriptstyle{n}$}]{}}
      child {node [label=-135: {$\scriptstyle{-}$}]{}}
      child {node [label=135: {$\scriptstyle{+}$}]{}};            
    \end{tikzpicture}
}
\right),\\
\Gamma_{i,j} & = & \frac{1}{2} \varphi_* \left(\left[\overline{\mathcal{H}}_{1,\pm\cup \{i,j,n\}^c}\right] \boxtimes
\left(
\mbox{
\begin{tikzpicture}[baseline={([yshift=-.7ex]current bounding box.center)}]
      \path(0,0) ellipse (1 and 1);     
      \tikzstyle{level 1}=[clockwise from=180,level distance=8mm,sibling angle=120]
      \node [draw,circle,fill] (A0) at (90:1) {}
      child {node [label=180: {$\scriptstyle{+}$}]{}}
      child {node [label=60: {$\scriptstyle{j}$}]{}}; 
      \tikzstyle{level 1}=[clockwise from=0,level distance=8mm,sibling angle=90]
      \node [draw,circle,fill] (A1) at (-90:1) {}
      child {node [label=0: {$\scriptstyle{i}$}]{}}
      child {node [label=-90: {$\scriptstyle{n}$}]{}}
      child {node [label=180: {$\scriptstyle{-}$}]{}};            
      \path (A0) edge [] (A1);
    \end{tikzpicture}
}
+
\mbox{
\begin{tikzpicture}[baseline={([yshift=-.7ex]current bounding box.center)}]
      \path(0,0) ellipse (1 and 1);     
      \tikzstyle{level 1}=[clockwise from=180,level distance=8mm,sibling angle=120]
      \node [draw,circle,fill] (A0) at (90:1) {}
      child {node [label=180: {$\scriptstyle{+}$}]{}}
      child {node [label=60: {$\scriptstyle{i}$}]{}}; 
      \tikzstyle{level 1}=[clockwise from=0,level distance=8mm,sibling angle=90]
      \node [draw,circle,fill] (A1) at (-90:1) {}
      child {node [label=0: {$\scriptstyle{j}$}]{}}
      child {node [label=-90: {$\scriptstyle{n}$}]{}}
      child {node [label=180: {$\scriptstyle{-}$}]{}};            
      \path (A0) edge [] (A1);
    \end{tikzpicture}
}
\right)
\right).
\end{eqnarray*}
\end{lem}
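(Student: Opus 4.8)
The plan is to translate the defining descriptions of $\Phi_i$ and $\Gamma_{i,j}$ from \S\ref{sec:rec} directly into the language of gluing maps. First I would note that the generic point of $\Phi_i$ is a genus $2$ curve whose dual graph $\Gamma_\Phi$ has a vertex of genus $1$ carrying the legs $\{i,n\}^c$, a rational vertex carrying the legs $i$ and $n$, and two edges joining them, so that $|\mathrm{Aut}(\Gamma_\Phi)|=2$, the nontrivial automorphism transposing the two edges (equivalently the two branches $p_+,p_-$). The associated gluing morphism $\xi_{\Gamma_\Phi}$ is exactly the map $\varphi$ for $I=\{i,n\}^c$, whose rational factor is $\overline{\mathcal{M}}_{0,\pm\cup\{i,n\}}\cong\overline{\mathcal{M}}_{0,4}$. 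Reading off the conditions in the definition: on the genus $1$ factor the relations $2p_k\sim p_++p_-$ for $k\in\{i,n\}^c$ cut out precisely $\overline{\mathcal{H}}_{1,\pm\cup\{i,n\}^c}$, while on the rational factor there is no constraint, so it contributes its fundamental class, represented by the four-valent rational vertex. Since this locus is generically reduced and $\xi_{\Gamma_\Phi}$ is generically two-to-one onto its image, we obtain $\Phi_i=\tfrac12\,\varphi_*\bigl([\overline{\mathcal{H}}_{1,\pm\cup\{i,n\}^c}]\boxtimes[\overline{\mathcal{M}}_{0,4}]\bigr)$, the first displayed identity.

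For $\Gamma_{i,j}$ the same recipe applies: the relevant dual graph has a genus $1$ vertex carrying $\{i,j,n\}^c$, a rational vertex carrying $i,j,n$, and two edges, again with automorphism group of order $2$; the gluing morphism is $\varphi$ for $I=\{i,j,n\}^c$, and the genus $1$ factor is cut out by $\overline{\mathcal{H}}_{1,\pm\cup\{i,j,n\}^c}$. The new ingredient is the locus on the rational factor $\overline{\mathcal{M}}_{0,\pm\cup\{i,j,n\}}\cong\overline{\mathcal{M}}_{0,5}$ carved out by the condition that $2p_i$, $2p_j$, and $p_++p_-$ lie in a common $g^1_2$. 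Working in a coordinate on the rational component with $p_i=0$ and $p_j=\infty$, the unique $g^1_2$ with $2p_i$, $2p_j$ in it is $x\mapsto x^2$, and the condition becomes $p_+=-p_-$, i.e.\ $(p_+,p_-;p_i,p_j)=-1$; since $p_n$ plays no role, the locus is the preimage under $\pi_n\colon\overline{\mathcal{M}}_{0,5}\to\overline{\mathcal{M}}_{0,\{+,-,i,j\}}$ of (the closure of) the harmonic point. Denote this divisor $H$, so that $\Gamma_{i,j}=\tfrac12\,\varphi_*\bigl([\overline{\mathcal{H}}_{1,\pm\cup\{i,j,n\}^c}]\boxtimes[H]\bigr)$.

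It then remains to put $[H]$ in the symmetric form of the statement. Since $\overline{\mathcal{M}}_{0,\{+,-,i,j\}}\cong\mathbb{P}^1$, the harmonic point is rationally equivalent to $\tfrac12$ of the sum of the two boundary points supported on $\{p_+,p_i\}$ and on $\{p_+,p_j\}$, so $[H]=\pi_n^*[\mathrm{pt}]=\tfrac12\,\pi_n^*(\Delta_{0,\{+,i\}}+\Delta_{0,\{+,j\}})$; expanding via $\pi_n^*\Delta_{0,S}=\Delta_{0,S}+\Delta_{0,S\cup\{n\}}$ and using $\Delta_{0,\{+,i,n\}}=\Delta_{0,\{-,j\}}$, $\Delta_{0,\{+,j,n\}}=\Delta_{0,\{-,i\}}$ gives $[H]=\tfrac12(\Delta_{0,\{+,i\}}+\Delta_{0,\{+,j\}}+\Delta_{0,\{-,i\}}+\Delta_{0,\{-,j\}})$ in $A^1(\overline{\mathcal{M}}_{0,5})$. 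I would then invoke the involution $\tau$ of $\overline{\mathcal{M}}_{1,\pm\cup I}\times\overline{\mathcal{M}}_{0,\pm\cup I^c}$ exchanging the markings $p_+$ and $p_-$ on both factors: it satisfies $\varphi\circ\tau=\varphi$, preserves $\overline{\mathcal{H}}_{1,\pm\cup\{i,j,n\}^c}$, and exchanges $\Delta_{0,\{+,k\}}$ with $\Delta_{0,\{-,k\}}$ for $k=i,j$, so each ``$-$'' term has the same $\varphi$-pushforward as its ``$+$'' partner. This yields $\Gamma_{i,j}=\tfrac12\,\varphi_*\bigl([\overline{\mathcal{H}}_{1,\pm\cup\{i,j,n\}^c}]\boxtimes(\Delta_{0,\{+,j\}}+\Delta_{0,\{+,i\}})\bigr)$, the second displayed identity.

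I expect the only substantive step to be this last one. Everything else is a bookkeeping restatement of the definitions; the point requiring care is that the displayed formula for $\Gamma_{i,j}$ is an equality of cycle \emph{classes}, not of cycles — the harmonic locus is irreducible, whereas $\Delta_{0,\{+,j\}}+\Delta_{0,\{+,i\}}$ is not — so the actual content lies in identifying the $g^1_2$-locus with the harmonic divisor and then using the $p_+\leftrightarrow p_-$ symmetry of $\varphi_*$ to pass to a representative symmetric in $i,j$. I would also confirm that $\Phi_i$ and $\Gamma_{i,j}$ are generically reduced, so that no multiplicity beyond the factor $\tfrac12$ coming from $|\mathrm{Aut}|$ enters.
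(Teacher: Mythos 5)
Your proposal is correct and is exactly the unwinding the paper has in mind: the paper offers no argument beyond the phrase ``by definition,'' and your writeup supplies the details that phrase suppresses, namely that $\varphi$ is generically $2$--to--$1$ onto the reduced closure (giving the $\tfrac12$), that the conditions on the two factors separate into $\overline{\mathcal{H}}_{1,\pm\cup I}$ and a condition on the rational tree, and --- the one genuinely non-tautological step --- that the $g^1_2$ condition on the rational component is the harmonic cross-ratio divisor, whose class equals $\Delta_{0,\{+,i\}}+\Delta_{0,\{+,j\}}$ after using rational equivalence on $\overline{\mathcal{M}}_{0,4}\cong\mathbb{P}^1$ and the $p_+\leftrightarrow p_-$ symmetry of $\varphi_*$. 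No gaps; your closing caveat that the displayed $\Gamma_{i,j}$ identity holds only at the level of classes, not cycles, is exactly the right point to record.
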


For $I\subset [n]$, let $\left[\overline{\mathcal{H}yp}_{2,I}\right]$ be the pull-back of the class of the closure of the locus of genus two curves with $|I|$ marked Weierstrass points
via the forgetful map $\overline{\mathcal{M}}_{2,[n]}\rightarrow \overline{\mathcal{M}}_{2,I}$.

The following Theorem expresses the classes $\Phi_i$ and $\Gamma_{i,j}$ as a linear combination of classes of type $\left[\overline{\mathcal{H}yp}_{2,I}\right]$ with boundary strata classes as coefficients.

\begin{thm}
\label{thm:phigamma}
For $n\leq 6$, one has
\begin{eqnarray*}
\Phi\Gamma_n & = & \sum_{i\in [n-1]} \left[\overline{\mathcal{H}yp}_{2,\{i,n\}^c}\right] \cdot
\mbox{
\begin{tikzpicture}[baseline={([yshift=-.7ex]current bounding box.center)}]
      \path(0,0) circle (1 and 1);
      \tikzstyle{level 1}=[counterclockwise from=-60,level distance=8mm,sibling angle=120]
      \node[draw,circle, fill] (A0) at (0:1.5) {} 
      child {node [label=-60: {$\scriptstyle{n}$}]{}}
      child {node [label=60: {$\scriptstyle{i}$}]{}};
      \tikzstyle{level 1}=[counterclockwise from=120,level distance=10mm,sibling angle=40]
      \node[draw,circle,inner sep=2.5] (A1) at (180:1.5) {$\scriptstyle{1}$}
      child {node [label=0: {}]{}}
      child {node [label=0: {}]{}}
      child {node [label=0: {}]{}}      
      child {node [label=0: {}]{}};
      \path (A0) edge [bend left=-45.000000] node[auto,near start,above=1]{}(A1);
      \path (A0) edge [bend left=45.000000] node[auto,near start,below=1]{}(A1);
         \end{tikzpicture}
}
-\frac{1}{2} \sum_{i,j \in [n-1]} \left[\overline{\mathcal{H}yp}_{2,\{i,j,n\}^c}\right] \cdot
\mbox{
\begin{tikzpicture}[baseline={([yshift=-.7ex]current bounding box.center)}]
      \path(0,0) ellipse (2 and 2);
      \tikzstyle{level 1}=[counterclockwise from=120,level distance=10mm,sibling angle=60]
      \node [draw,circle,inner sep=2.5] (A0) at (180:1.5) {$\scriptstyle{1}$} 
      child {node [label=0: {}]{}}
      child {node [label=0: {}]{}}
      child {node [label=0: {}]{}};
      \tikzstyle{level 1}=[counterclockwise from=60,level distance=8mm,sibling angle=60]
      \node [draw,circle,fill] (A1) at (60:1.5) {}
      child {node [label=60: {$\scriptstyle{j}$}]{}};
      \tikzstyle{level 1}=[clockwise from=-30,level distance=8mm,sibling angle=60]
      \node [draw,circle,fill] (A2) at (-60:1.5) {}
      child {node [label=0: {$\scriptstyle{i}$}]{}}
      child {node [label=-90: {$\scriptstyle{n}$}]{}};      
      \path (A0) edge [] (A1);
      \path (A0) edge [] (A2);
      \path (A1) edge [] (A2);      
    \end{tikzpicture}
}
\\
&& +\frac{1}{2} \sum_{i,j,k \in [n-1]} \left[\overline{\mathcal{H}yp}_{2,\{i,j,k,n\}^c}\right] \cdot
\mbox{
\begin{tikzpicture}[baseline={([yshift=-.7ex]current bounding box.center)}]
      \path(0,0) ellipse (2 and 2);
      \tikzstyle{level 1}=[counterclockwise from=150,level distance=10mm,sibling angle=60]
      \node [draw,circle,inner sep=2.5] (A0) at (180:1.5) {$\scriptstyle{1}$} 
      child {node [label=0: {}]{}}
      child {node [label=0: {}]{}};
      \tikzstyle{level 1}=[counterclockwise from=90,level distance=8mm,sibling angle=60]
      \node [draw,circle,fill] (A1) at (90:1.5) {}
      child {node [label=90: {$\scriptstyle{j}$}]{}};
      \tikzstyle{level 1}=[counterclockwise from=-45,level distance=8mm,sibling angle=90]
      \node [draw,circle,fill] (A2) at (0:1.5) {}
      child {node [label=-45: {$\scriptstyle{n}$}]{}}
      child {node [label=45: {$\scriptstyle{i}$}]{}};
      \tikzstyle{level 1}=[clockwise from=-90,level distance=8mm,sibling angle=60]
      \node [draw,circle,fill] (A3) at (-90:1.5) {}
      child {node [label=-90: {$\scriptstyle{k}$}]{}};      
      \path (A0) edge [] (A1);
      \path (A1) edge [] (A2);
      \path (A2) edge [] (A3);      
      \path (A3) edge [] (A0);            
    \end{tikzpicture}
}
\\
&& +\frac{1}{4} \sum_{i,j,k,l \in [n-1]} \left[\overline{\mathcal{H}yp}_{2,\{i,j,k,l,n\}^c}\right] \cdot
\left(
\mbox{
\begin{tikzpicture}[baseline={([yshift=-.7ex]current bounding box.center)}]
      \path(0,0) ellipse (2 and 2);
      \tikzstyle{level 1}=[counterclockwise from=180,level distance=10mm,sibling angle=60]
      \node [draw,circle,inner sep=2.5] (A0) at (180:1.5) {$\scriptstyle{1}$} 
      child {node [label=0: {}]{}};
      \tikzstyle{level 1}=[counterclockwise from=108,level distance=8mm,sibling angle=60]
      \node [draw,circle,fill] (A1) at (108:1.5) {}
      child {node [label=108: {$\scriptstyle{l}$}]{}};
      \tikzstyle{level 1}=[counterclockwise from=36,level distance=8mm,sibling angle=60]
      \node [draw,circle,fill] (A2) at (36:1.5) {}
      child {node [label=36: {$\scriptstyle{k}$}]{}};
      \tikzstyle{level 1}=[counterclockwise from=-36,level distance=8mm,sibling angle=60]
      \node [draw,circle,fill] (A3) at (-36:1.5) {}
      child {node [label=-36: {$\scriptstyle{j}$}]{}};
      \tikzstyle{level 1}=[clockwise from=-78,level distance=8mm,sibling angle=60]
      \node [draw,circle,fill] (A4) at (-108:1.5) {}
      child {node [label=-78: {$\scriptstyle{i}$}]{}}
      child {node [label=-138: {$\scriptstyle{n}$}]{}};      
      \path (A0) edge [] (A1);
      \path (A1) edge [] (A2);
      \path (A2) edge [] (A3);      
      \path (A3) edge [] (A4);            
      \path (A0) edge [] (A4);            
    \end{tikzpicture}
}
-3
\mbox{
\begin{tikzpicture}[baseline={([yshift=-.7ex]current bounding box.center)}]
      \path(0,0) ellipse (2 and 2);
      \tikzstyle{level 1}=[counterclockwise from=180,level distance=10mm,sibling angle=60]
      \node [draw,circle,inner sep=2.5] (A0) at (180:1.5) {$\scriptstyle{1}$} 
      child {node [label=0: {}]{}};
      \tikzstyle{level 1}=[counterclockwise from=108,level distance=8mm,sibling angle=60]
      \node [draw,circle,fill] (A1) at (108:1.5) {}
      child {node [label=108: {$\scriptstyle{k}$}]{}};
      \tikzstyle{level 1}=[counterclockwise from=36,level distance=8mm,sibling angle=60]
      \node [draw,circle,fill] (A2) at (36:1.5) {}
      child {node [label=36: {$\scriptstyle{j}$}]{}};
      \tikzstyle{level 1}=[counterclockwise from=-66,level distance=8mm,sibling angle=60]
      \node [draw,circle,fill] (A3) at (-36:1.5) {}
      child {node [label=-66: {$\scriptstyle{n}$}]{}}
      child {node [label=0: {$\scriptstyle{i}$}]{}};
      \tikzstyle{level 1}=[clockwise from=-108,level distance=8mm,sibling angle=60]
      \node [draw,circle,fill] (A4) at (-108:1.5) {}
      child {node [label=-108: {$\scriptstyle{l}$}]{}};      
      \path (A0) edge [] (A1);
      \path (A1) edge [] (A2);
      \path (A2) edge [] (A3);      
      \path (A3) edge [] (A4);            
      \path (A0) edge [] (A4);            
    \end{tikzpicture}
}
\right)
\\
&& +\frac{1}{4} \sum_{i,j,k,l,m \in [n-1]}
\left(
3
\mbox{
\begin{tikzpicture}[baseline={([yshift=-.7ex]current bounding box.center)}]
      \path(0,0) ellipse (2 and 2);
      \tikzstyle{level 1}=[counterclockwise from=180,level distance=10mm,sibling angle=60]
      \node [draw,circle,inner sep=2.5] (A0) at (180:1.5) {$\scriptstyle{1}$};
      \tikzstyle{level 1}=[counterclockwise from=120,level distance=8mm,sibling angle=60]
      \node [draw,circle,fill] (A1) at (120:1.5) {}
      child {node [label=120: {$\scriptstyle{k}$}]{}};
      \tikzstyle{level 1}=[counterclockwise from=60,level distance=8mm,sibling angle=60]
      \node [draw,circle,fill] (A2) at (60:1.5) {}
      child {node [label=60: {$\scriptstyle{j}$}]{}};
      \tikzstyle{level 1}=[counterclockwise from=-30,level distance=8mm,sibling angle=60]
      \node [draw,circle,fill] (A3) at (0:1.5) {}
      child {node [label=-30: {$\scriptstyle{n}$}]{}}
      child {node [label=30: {$\scriptstyle{i}$}]{}};
      \tikzstyle{level 1}=[clockwise from=-60,level distance=8mm,sibling angle=60]
      \node [draw,circle,fill] (A4) at (-60:1.5) {}
      child {node [label=-60: {$\scriptstyle{m}$}]{}};      
      \tikzstyle{level 1}=[clockwise from=-120,level distance=8mm,sibling angle=60]
      \node [draw,circle,fill] (A5) at (-120:1.5) {}
      child {node [label=-120: {$\scriptstyle{l}$}]{}};            
      \path (A0) edge [] (A1);
      \path (A1) edge [] (A2);
      \path (A2) edge [] (A3);      
      \path (A3) edge [] (A4);            
      \path (A4) edge [] (A5);            
      \path (A5) edge [] (A0);            
    \end{tikzpicture}
}
-
\mbox{
\begin{tikzpicture}[baseline={([yshift=-.7ex]current bounding box.center)}]
      \path(0,0) ellipse (2 and 2);
      \tikzstyle{level 1}=[counterclockwise from=180,level distance=10mm,sibling angle=60]
      \node [draw,circle,inner sep=2.5] (A0) at (180:1.5) {$\scriptstyle{1}$};
      \tikzstyle{level 1}=[counterclockwise from=120,level distance=8mm,sibling angle=60]
      \node [draw,circle,fill] (A1) at (120:1.5) {}
      child {node [label=120: {$\scriptstyle{m}$}]{}};
      \tikzstyle{level 1}=[counterclockwise from=60,level distance=8mm,sibling angle=60]
      \node [draw,circle,fill] (A2) at (60:1.5) {}
      child {node [label=60: {$\scriptstyle{l}$}]{}};
      \tikzstyle{level 1}=[counterclockwise from=0,level distance=8mm,sibling angle=60]
      \node [draw,circle,fill] (A3) at (0:1.5) {}
      child {node [label=0: {$\scriptstyle{k}$}]{}};
      \tikzstyle{level 1}=[clockwise from=-60,level distance=8mm,sibling angle=60]
      \node [draw,circle,fill] (A4) at (-60:1.5) {}
      child {node [label=-60: {$\scriptstyle{j}$}]{}};      
      \tikzstyle{level 1}=[clockwise from=-90,level distance=8mm,sibling angle=60]
      \node [draw,circle,fill] (A5) at (-120:1.5) {}
      child {node [label=-90: {$\scriptstyle{i}$}]{}}
      child {node [label=-150: {$\scriptstyle{n}$}]{}};            
      \path (A0) edge [] (A1);
      \path (A1) edge [] (A2);
      \path (A2) edge [] (A3);      
      \path (A3) edge [] (A4);            
      \path (A4) edge [] (A5);            
      \path (A5) edge [] (A0);            
    \end{tikzpicture}
}
\right).
\end{eqnarray*}
\end{thm}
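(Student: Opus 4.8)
The plan is to compare the two available descriptions of the classes $\Phi_i$ and $\Gamma_{i,j}$: the push-forward description of Lemma~\ref{lem:phigamma}, which exhibits them as $\tfrac12\varphi_*$ of a product $\left[\overline{\mathcal{H}}_{1,\pm\cup I}\right]\boxtimes(\text{class on a rational chain})$, and a matching description of the classes $\left[\overline{\mathcal{H}yp}_{2,J}\right]$ appearing on the right-hand side. The point is that both sides of the asserted identity can be written as push-forwards under $\varphi$ of classes of the form $\left[\overline{\mathcal{H}}_{1,\pm\cup J}\right]\boxtimes(\text{something supported on a chain of rational curves})$, so that the theorem reduces to an identity among the rational factors, which is precisely what the combinatorics of \S\ref{sec:comb} is designed to handle.

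The first ingredient I would prepare is a formula for the elliptic class $\left[\overline{\mathcal{H}}_{1,\pm\cup I}\right]\in A^{|I|}(\overline{\mathcal{M}}_{1,\pm\cup I})$, obtained by induction on $|I|$. The case $I=\varnothing$ is the fundamental class of $\overline{\mathcal{M}}_{1,\pm}$; for the inductive step I would realize $\overline{\mathcal{H}}_{1,\pm\cup(I\cup\{k\})}$ as the distinguished component of the intersection of $\pi_k^*\!\left(\overline{\mathcal{H}}_{1,\pm\cup I}\right)$ with the divisor $\{2p_k\sim p_++p_-\}$ inside $\overline{\mathcal{M}}_{1,\pm\cup I\cup\{k\}}$, after isolating the remaining components (those where $p_k$ collides with $p_\pm$, or is pushed out onto a rational tail). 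The class of that divisor is controlled by an identity for divisor classes on $\overline{\mathcal{M}}_{1,\bullet}$ --- the genus-one counterpart of the classical genus-two Weierstrass relation used in \cite[Theorem~2.2]{MR910206} --- expressing it through $\psi_k$, the classes $\omega$, and the boundary. Iterating yields an expansion of $\left[\overline{\mathcal{H}}_{1,\pm\cup I}\right]$ as a sum over rooted rational trees on $I$ of decorated strata, with signs governed by the first Betti number.

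The second step is the geometric reduction. From the realization of $\overline{\mathcal{H}yp}_{2,n}$ as (a multiple of) the image of a space of admissible covers, one checks that the pull-back of $\left[\overline{\mathcal{H}yp}_{2,J}\right]$ along the glueing map onto a boundary divisor parametrizing a genus-one curve joined to a chain of rational curves factors as $\left[\overline{\mathcal{H}}_{1,\pm\cup J}\right]$ on the elliptic factor times, on the rational chain, the class of the locus of genus-zero admissible double covers with the prescribed marked branch points; this last class is of exactly the shape $\sum_T {\xi_T}_*\!\bigl(\prod_{v}(\psi_{h(v)}-1)^{-1}\bigr)$ (and its $\omega$-twisted version, cf.\ Remark~\ref{rem:geom}) studied in \S\ref{sec:comb}. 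Feeding in the elliptic expansion above and the rational factors of Lemma~\ref{lem:phigamma}, the verification of the theorem becomes a purely combinatorial reorganization of sums over rational trees: one uses Lemma~\ref{euler}, Lemma~\ref{pull-back} and, above all, Proposition~\ref{eulergen} to collapse the sum over $G^{ne}_{0,m+1}$ into the sum over $B_{0,m+1}$ weighted by $(-1)^{|V(T)|}$, which is what produces the rational chains with a single trivalent vertex drawn on the right-hand side, together with the coefficients $-\tfrac12,\ \tfrac12,\ \tfrac14$ and the integer $-3$.

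I expect the combinatorial step to be the main obstacle. One has to keep precise account of the automorphism weights $|{\rm Aut}(\Gamma)|$ and of the inclusion--exclusion over which of the conditions $2p_k\sim p_++p_-$ are imposed on the elliptic component versus absorbed into a rational tail --- this overlap of conditions being the genuine source of the alternating signs --- and one must verify that the configurations excluded by the combinatorial types $G^{ne}_{0,m+1}$ cancel in pairs exactly as in Lemma~\ref{euler}. The hypothesis $n\leq6$ enters only to ensure that the elliptic component can carry all the absorbed conditions (so that $|J|\leq4$ and the relevant loci have the expected dimension) and that the resulting list of graphs is the finite one displayed in the statement; the argument itself is insensitive to it.
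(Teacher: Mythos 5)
Your overall strategy---push both sides onto the boundary via $\varphi$ and compare the rational factors---is in the right spirit, but it diverges from the paper's argument and has two genuine gaps. The asserted factorization of the restriction of $\left[\overline{\mathcal{H}yp}_{2,J}\right]$ to a boundary stratum as $\left[\overline{\mathcal{H}}_{1,\pm\cup J}\right]$ boxed with a rational-chain class is not correct without further analysis: the product $\left[\overline{\mathcal{H}yp}_{2,J}\right]\cdot(\text{stratum})$ has several components, corresponding to the marked points of $J$ either remaining on the elliptic component or colliding with one of the two nodes, and the intersection has multiplicity $2$ along the latter components while the glueing maps $\varphi$ have degree $2$. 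This bookkeeping---carried out in the paper through identities such as \eqref{admcoverint}, proved with admissible covers---is exactly what produces the coefficients $-\tfrac12,\ \tfrac12,\ \tfrac14$ and $-3$. Your proposal acknowledges an ``inclusion--exclusion'' but does not supply the multiplicity computation, which is the actual content of the proof; the paper then simply iterates these identities, trading each $\varphi_*$-term for an $\left[\overline{\mathcal{H}yp}_{2,J}\right]\cdot(\text{stratum})$-term plus corrections supported on longer chains until the process terminates, and never needs an explicit formula for $\left[\overline{\mathcal{H}}_{1,\pm\cup I}\right]$.

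Second, the combinatorial machinery of \S\ref{sec:comb} (Lemma \ref{euler}, Lemma \ref{pull-back}, Proposition \ref{eulergen}) concerns \emph{rooted} trees attached to the core along a single half-edge and decorated by $\prod_v(\psi_{h(v)}-1)^{-1}$; it is deployed in the proof of Theorem \ref{mainthmctintro} to organize rational tails. The configurations relevant to Theorem \ref{thm:phigamma} are rational chains joined to the elliptic vertex at \emph{two} points, i.e.\ cycles in the dual graph, and the trees of $B_{0,m+1}$ produced by Proposition \ref{eulergen} are not the chains appearing in the statement. The claimed reduction to Proposition \ref{eulergen} therefore does not go through, and a different combinatorial identity would be required. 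Likewise, the explicit rooted-tree expansion of $\left[\overline{\mathcal{H}}_{1,\pm\cup I}\right]$ that you propose as a first ingredient is a substantial computation in its own right (essentially a genus-one analogue of Proposition \ref{rec} together with an identification of all excess components), which the paper deliberately avoids and which your sketch does not carry out.
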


In the above formula, we use the following convention on graphs. 
For simplicity, all graphs in the statement are drawn with six legs. 
All unmarked legs are attached to the elliptic vertex. 
If $n<6$, one drops the appropriate number of legs attached to the elliptic vertex, {and truncates the formula ignoring the part where the rational components all together contain more than $n$ legs.}
For each graph, one distributes the remaining markings to the legs attached to the elliptic vertex
(up to isomorphism, there is a unique way to do this).

\begin{proof}
The following identity
\begin{multline}
\label{admcoverint}
\left[\overline{\mathcal{H}yp}_{2,\{i,n\}^c}\right] \cdot
\mbox{
\begin{tikzpicture}[baseline={([yshift=-.7ex]current bounding box.center)}]
      \path(0,0) circle (1 and 1);
      \tikzstyle{level 1}=[counterclockwise from=-60,level distance=8mm,sibling angle=120]
      \node[draw,circle, fill] (A0) at (0:1.5) {} 
      child {node [label=-60: {$\scriptstyle{n}$}]{}}
      child {node [label=60: {$\scriptstyle{i}$}]{}};
      \tikzstyle{level 1}=[counterclockwise from=120,level distance=10mm,sibling angle=40]
      \node[draw,circle,inner sep=2.5] (A1) at (180:1.5) {$\scriptstyle{1}$}
      child {node [label=0: {}]{}}
      child {node [label=0: {}]{}}
      child {node [label=0: {}]{}}      
      child {node [label=0: {}]{}};
      \path (A0) edge [bend left=-45.000000] node[auto,near start,above=1]{}(A1);
      \path (A0) edge [bend left=45.000000] node[auto,near start,below=1]{}(A1);
         \end{tikzpicture}
}
 =  
\frac{1}{2} \varphi_* \left(\left[\overline{\mathcal{H}}_{1,\pm\cup \{i,n\}^c}\right] \boxtimes
\mbox{
\begin{tikzpicture}[baseline={([yshift=-.7ex]current bounding box.center)}]
      \path(0,0) ellipse (1 and 1);     
      \tikzstyle{level 1}=[clockwise from=45,level distance=8mm,sibling angle=90]
      \node [draw,circle,fill] (A0) at (0:0) {}
      child {node [label=45: {$\scriptstyle{i}$}]{}}
      child {node [label=-45: {$\scriptstyle{n}$}]{}}
      child {node [label=-135: {$\scriptstyle{-}$}]{}}
      child {node [label=135: {$\scriptstyle{+}$}]{}};            
    \end{tikzpicture}
}
\right)
\\
+ \sum_{j} 
\varphi_* \left(\left[\overline{\mathcal{H}}_{1,\pm\cup \{i,j,n\}^c}\right] \boxtimes
\mbox{
\begin{tikzpicture}[baseline={([yshift=-.7ex]current bounding box.center)}]
      \path(0,0) ellipse (1 and 1);     
      \tikzstyle{level 1}=[clockwise from=180,level distance=8mm,sibling angle=120]
      \node [draw,circle,fill] (A0) at (90:1) {}
      child {node [label=180: {$\scriptstyle{+}$}]{}}
      child {node [label=60: {$\scriptstyle{j}$}]{}}; 
      \tikzstyle{level 1}=[clockwise from=0,level distance=8mm,sibling angle=90]
      \node [draw,circle,fill] (A1) at (-90:1) {}
      child {node [label=0: {$\scriptstyle{i}$}]{}}
      child {node [label=-90: {$\scriptstyle{n}$}]{}}
      child {node [label=180: {$\scriptstyle{-}$}]{}};            
      \path (A0) edge [] (A1);
    \end{tikzpicture}
}
\right)
+\sum_{j<k} 
\varphi_* \left(\left[\overline{\mathcal{H}}_{1,\pm\cup \{i,j,k,n\}^c}\right] \boxtimes
\mbox{
\begin{tikzpicture}[baseline={([yshift=-.7ex]current bounding box.center)}]
     \path(0,0) ellipse (1 and 2);     
      \tikzstyle{level 1}=[clockwise from=180,level distance=8mm,sibling angle=120]
      \node [draw,circle,fill] (A0) at (120:1.5) {}
      child {node [label=180: {$\scriptstyle{+}$}]{}}
      child {node [label=60: {$\scriptstyle{k}$}]{}}; 
      \tikzstyle{level 1}=[clockwise from=0,level distance=8mm,sibling angle=90]
      \node [draw,circle,fill] (A1) at (0:0) {}
      child {node [label=0: {$\scriptstyle{j}$}]{}};
      \tikzstyle{level 1}=[clockwise from=-60,level distance=8mm,sibling angle=60]
      \node [draw,circle,fill] (A2) at (-120:1.5) {}
      child {node [label=-60: {$\scriptstyle{i}$}]{}}
      child {node [label=-120: {$\scriptstyle{n}$}]{}}
      child {node [label=180: {$\scriptstyle{-}$}]{}};            
      \path (A0) edge [] (A1);
      \path (A1) edge [] (A2);
    \end{tikzpicture}
}
\right)
\end{multline}
can be verified by using the space of admissible covers to study the intersection on the left-hand side.
Indeed, there are three types of components in the intersection. On the right-hand side, the first summand coincides with the case when all points with labels in $\{i,n\}^c$ lie on a smooth elliptic component. 
The second summand coincides with the case when one of the points with labels in $\{i,n\}^c$ collides towards one of the two singular points on the elliptic component; the resulting stable curves (are stably equivalent to curves which) admit an admissible double cover ramified at all points with labels in $\{i,n\}^c$. Finally, the third summand arises when two of the points with labels in $\{i,n\}^c$ collide towards one of the two singular points on the elliptic component. 
The intersection is transversal along the first type of component and has multiplicity $2$ along the other components (each singular point gives one contribution). On the other hand, all glueing maps $\varphi$ have degree $2$. This explains the coefficients in the identity.

Using Lemma \ref{lem:phigamma} to express the classes $\Phi_i$ and (\ref{admcoverint}), we have
\begin{eqnarray*}
\Phi\Gamma_n &=&
\sum_{i\in [n-1]} 
\left[\overline{\mathcal{H}yp}_{2,\{i,n\}^c}\right] \cdot
\mbox{

}
\right).
\]
\end{proof}

\begin{rem}
As a check, the formula for $\Phi\Gamma_n$ in Theorem \ref{thm:phigamma} verifies for $3\leq n\leq 6$ the equality
\[
{\pi_{n-1}}_* \left(\Phi\Gamma_n \right)= (8-n) \Phi\Gamma_{n-1}
\]
satisfied by the classes $\Phi\Gamma_n$ after  the formulas in the proof of Proposition \ref{rec}.
\end{rem}

\subsection{The contributions of non-compact type}
\label{subsec:NCTn}
We are now ready to complete the description of the class $\left[\overline{\mathcal{H}yp}_{2,n}\right]$. 
Define
\[
{\rm NCT}_n := \left[\overline{\mathcal{H}yp}_{2,n}\right] - \left[{\mathcal{H}yp}^{ct}_{2,n}\right].
\]
Restricting Proposition \ref{rec} to $A^*(\overline{\mathcal{M}}_{2,n}\setminus \mathcal{M}^{ct}_{2,n})$,
one has
\begin{align}
\label{eqNCTn}
{\rm NCT}_n = \pi_n^*({\rm NCT}_{n-1}) \cdot (3\omega_n - \lambda - \delta_1) - \sum_{i\in[n-1]} \sigma_{i *} ({\rm NCT}_{n-1})- \Phi\Gamma_n.
\end{align}
This  equation and Theorem \ref{thm:phigamma} allow us to recursively compute ${\rm NCT}_n$. In the following, we simplify the resulting expressions when $n\leq 5$. We use the convention on labelling graphs from \S \ref{nota}.

\begin{rem}
As we have already discussed, for $n=1$ one has
\[
\left[\overline{\mathcal{H}yp}_{2,1}\right] = 3\omega -\lambda - \delta_1.
\]
This formula implies that the pull-back of $\left[\overline{\mathcal{H}yp}_{2,1}\right]$ via a forgetful map $\Mbar_{2,n} \rightarrow \Mbar_{2,1}$  consists of polynomials in $\omega $ and $\lambda$ classes restricted to strata which meet transversally the locus of curves of non-compact type in $\Mbar_{2,n}$. The classes $\omega$ and $\lambda$ restrict in a natural way to any stratum in the moduli space, and the intersection of the supporting strata with strata corresponding to dual graphs where all edges are non-separating is transversal. Hence the formulas we provide do not hide any issue of non-transversal/excess intersection.  In the following, we  express ${\rm NCT}_3, {\rm NCT}_4,$ and ${\rm NCT}_5$ as  linear combinations of intersections of compact-type hyperelliptic classes, polynomials in $(\lambda+\delta)$ and boundary strata classes.
\end{rem}

\begin{rem}
When $n=2$, the class $\Phi\Gamma_2$ is supported on a single boundary stratum, and we have
\begin{align}
\label{NCT2}
{\rm NCT}_2 = - \Phi\Gamma_2 = 
-
\mbox{

},
\label{psionfork}
\end{align}
 and obtain the symmetric expression in the statement. Relation \eqref{psionm} is obtained using the fact that $\psi = \lambda$ on $\overline{\mathcal{M}}_{1,1}$, and then expressing $\psi$ on $\overline{\mathcal{M}}_{1,2}$ as the pull-back of $\psi$ on $\overline{\mathcal{M}}_{1,1}$ plus a section. Relation \eqref{psionfork} is obtained by pushing-forward two different boundary expressions for a $\psi$
 class on $\overline{\mathcal{M}}_{0,5}$.
\end{proof}

\begin{proof}[Proof of Theorem \ref{mainthm<=4}]
The graphs appearing in the above expression for ${\rm NCT}_n$ when $n\leq 4$ coincide with the graphs in $\widetilde{G}_{2,n}$. As the decorations match the degree $n$ component of the formula in Theorem \ref{mainthm<=4}, the statement follows.
\end{proof}

Theorem  \ref{thm:phigamma} and \eqref{eqNCTn} determine explicit expressions  for the classes ${\rm NCT}_n$ also when $n=5,6$.
When $n=5$, we simplify the resulting expression by means of tautological relations similar to the ones used for $n\leq 4$. We obtain the following formula.

\begin{cor}
\label{cor:NCT5}
${\rm NCT}_5$ is equal to
\begin{align*}
& - \sum_{i<j\leq 5}\left[{\mathcal{H}yp}^{ct}_{2,\{i,j\}^c}\right]  
\mbox{

}
.
\end{align*}

\bibliographystyle{alphanum}
\bibliography{Biblio.bib}

\end{document}